\documentclass[11pt]{amsart}
\usepackage{amsthm}
\theoremstyle{plain}
\newtheorem{theorem}{Theorem}
\newtheorem*{theorem*}{Theorem}

\usepackage{geometry}           
\usepackage{graphicx}
\usepackage{caption}
\usepackage{amssymb}
\usepackage{amsthm}
\usepackage{epstopdf}
\usepackage{mathrsfs}  
\usepackage{color}
\usepackage{float}
\usepackage{hyperref}
\usepackage{doi}
\usepackage{xcolor}
\usepackage{subcaption}
\usepackage{makecell}

\usepackage{amsaddr}

\DeclareMathOperator{\arccosh}{arccosh}
\DeclareMathOperator{\arcsinh}{arcsinh}

\title[Dirichlet domains and NEC uniformization of extremal hyperbolic surfaces]{Dirichlet domains and the NEC uniformization of extremal hyperbolic surfaces}

\author{Ernesto Girondo \and Claudia Mu\~noz}

\address{Departamento de Matem\'aticas, Universidad Aut\'onoma de Madrid and Instituto de Ciencias Matem\'aticas ICMAT (CSIC-UAM-UCM-UC3M). 28049-Madrid, Spain. \ \texttt{ernesto.girondo@uam.es, claudia.munnoz@uam.es}}

\usepackage{mdframed}

\newtheorem{corollary}{Corollary}
\newtheorem{lemma}{Lemma}
\newtheorem{proposition}{Proposition}
\newtheorem{definition}{Definition}

\theoremstyle{remark}
\newtheorem{remark}{Remark}
\newtheorem{example}{Example}

\begin{document}

\renewcommand{\thefootnote}{\fnsymbol{footnote}} 
\footnotetext{MSC2020: 52C26, 30F50.}
\footnotetext{Keywords: injectivity radius,  hyperbolic surfaces, uniformization.}     
\renewcommand{\thefootnote}{\arabic{footnote}}

\begin{abstract}
Extremal hyperbolic surfaces are finite-type hyperbolic surfaces that contain an embedded disc of the largest possible radius. We introduce a new general approach to these objects, based on the uniformization by NEC groups, obtaining a structural description of the Dirichlet domains associated to extremal disc centers for surfaces with cusps and/or geodesic boundary. We also provide a number of applications of our new description of extremal surfaces, as an effective counting formula for extremal disc configurations, the determination of every such configuration with Euler characteristic $-1$ and $-2$, the location of hidden extremal discs in several families of extremal surfaces, and the full description of  their automorphism groups.
\end{abstract}

\maketitle

\section{Introduction and statement of results}

A fundamental geometric invariant of hyperbolic surfaces is their maximal injectivity radius, namely the radius of their largest embedded metric disc. Hyperbolic surfaces that attain the maximum possible value of this invariant for a fixed topology are called \emph{extremal surfaces}. 

Since the work of Bavard \cite{Bavard}, closed extremal surfaces have played an important role in the interaction between hyperbolic geometry, discrete group actions and extremal metric problems. For closed hyperbolic surfaces, the theory of Fuchsian and NEC groups proved to be a particularly effective tool in the study of extremal discs, allowing the determination of extremal surfaces, the computation of their automorphism groups and the location of their extremal discs (see \cite{Girondo_Gonzalez-Diez_1999}, \cite{Girondo_Gonzalez-Diez_2002}, \cite{Girondo_Gonzalez-Diez_2002_2}, \cite{Girondo_Nakamura_2007}, \cite{Nakamura_2002}, \cite{Nakamura_2005}, \cite{Nakamura_2009}, \cite{Nakamura_2012}, \cite{Nakamura_2013}, \cite{Nakamura_2016}).

More recently, J.~deBlois introduced a fundamentally different approach to the study of extremal radii based on the \emph{centered dual complex}. This point of view led to sharp bounds for the maximal injectivity radius of hyperbolic surfaces of finite type with cusps (\cite{deblois2015}, \cite{deblois2018}) and, later, with geodesic boundary (\cite{deblois-romanelli}),  providing  a complete understanding of the extremal radius as a function that depends only on the topology of the surface. 

The main purpose of this article is to recover the theory of extremal surfaces within the classical setting of the uniformization by NEC groups. 
An informal description of our first main result is the following: 

\begin{quote}
Let $S$ be an extremal hyperbolic surface, maybe  with cusps and/or geodesic boundary, uniformized by a NEC group $\Gamma$, and let $z$ be a lift of an extremal disc center in $S$. Then the Dirichlet domain $D_{\Gamma}(z)$ presents certain rigid geometric features that are determined by the topology of the surface.
\end{quote}

The precise statement is given in \autoref{theo2}. 
Our starting observation is therefore that extremal discs determine remarkably rigid Dirichlet domains, whose geometry and combinatorics are subjected to strong constraints determined by the topology of the underlying surface.  In particular, extremal surfaces can be studied through the geometry of a single Dirichlet domain with a really specific shape, together with a finite side-pairing pattern. This description extends to surfaces with cusps and boundary the role traditionally played by regular Dirichlet polygons in the study of closed extremal surfaces. Our approach leads to a NEC group characterization of extremal surfaces that complements the centered-dual-complex viewpoint and provides an effective framework for addressing some classification and counting problems.

Our second principal contribution is what we call the  EBF-correspondence, which links extremal disc configurations to subgroups of suitable chosen extended triangle groups. This way we transform the question of counting extremal disc configurations into a group-theoretic problem, yielding to explicit formulas and an effective computational tool for the construction of extremal surfaces.

Finally, we show that the geometry of these Dirichlet domains can be used to locate the hidden extremal discs, determine all extremal surfaces within certain families and compute their automorphism groups.

The novelty of our approach is that it recovers the geometry of general extremal surfaces directly from the action of their uniformizing NEC groups, as in the case of closed surfaces. This provides a bridge between the centered-dual-complex description provided by DeBlois and Romanelli and the classical uniformization techniques that have traditionally been used in the study of closed extremal surfaces.

The structure of the paper is as follows. In \autoref{sec:extremalsurfaces} we introduce extremal surfaces and recall the geometric framework established by DeBlois and Romanelli. Section \ref{sec:uniformization} contains the main structural result of the paper, namely the description of the Dirichlet domains associated with extremal disc centers. In \autoref{sec:extremaldiscconfigurations} we introduce the EBF-correspondence and obtain counting results for extremal disc configurations, including a complete analysis of the cases with Euler characteristic $\chi=-1$ and $\chi=-2$. Section \ref{section:location} develops the theory of admissible displacement cycles and applies it to the location of additional extremal discs. Finally, in \autoref{sec:counting} we combine these techniques to determine explicitely several families of extremal surfaces and their automorphism groups.

\section{Extremal surfaces} 
\label{sec:extremalsurfaces}

It is a well-known fact that the injectivity radius of a hyperbolic surface of finite volume is bounded by a constant that depends solely on the topology of the surface. In  order to describe this bound explicitly, let us denote by $\alpha(r)$ the angle of an equilateral 
triangle with sidelength $2r$, by $\beta(r)$ the positive angle of a horocyclic ideal triangle with compact side of length $2r$ and by $\gamma(r)$ the angle of an  equilateral regular quadrilateral with sidelength $2r$. The following statement is just a rewriting of Theorem 1.1 in \cite{deblois-romanelli}, which provides an upper bound for the radius of a disc embedded in a general hyperbolic surface in terms of its topology and gives a characterization of the surfaces that contain a disc with that maximal radius:

\begin{theorem}[\cite{deblois-romanelli}, Theorem 1.1]\label{theo1} Given integer numbers $\chi<0$ and $n,b\geq 0$, the radius of a metric disc embedded in a complete hyperbolic surface $S$ with  Euler characteristic $\chi$, $b$ geodesic boundary components and $n$ cusps cannot be larger than  $r_{\chi,n,b}$, the unique positive solution to the equation 

\vspace{-0.7em}

\begin{equation}\label{deblois}
\left(6-6\chi-3n-3b\right)\alpha\big(r_{\chi,n,b}\big)+2n\beta\big(r_{\chi,n,b}\big)+2b\gamma\big(r_{\chi,n,b}\big)=2\pi.
\end{equation}

\vspace{0.3em}

Moreover, $S$ contains an embedded metric disc of radius $r_{\chi,n,b}$ centered at a point $p\in S$ if and only if $S$ admits a cell decomposition such that all the vertices lying in the interior of $S$ agree with $p$ and the 2-cells are either equilateral triangles of sidelength $2r_{\chi,n,b}$, horocyclic ideal triangles of compact sidelength $2r_{\chi,n,b}$ or Saccheri quadrilaterals with legs of length $r_{\chi,n,b}$ and summit of length $2r_{\chi,n,b}$. The horocyclic ideal triangles and the Saccheri quadrilaterals are, respectively, in one-to-one correspondence with the cusps of $S$ and the boundary components of $S$. Each boundary component is the base of one of these quadrilaterals, so it has length $2\arcsinh(\tanh(r_{\chi,n,b}))$.

 For each triplet $(\chi,n,b)$, there exists a hyperbolic surface with Euler characteristic $\chi$,  $n$ cusps and $b$ boundary components that admits an embedded disc of maximal radius $r_{\chi,n,b}$.
\end{theorem}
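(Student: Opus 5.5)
The plan is to follow the centered-dual-complex strategy of DeBlois. Fix an embedded disc of radius $r$ centered at $p\in S$ and lift to the universal cover $\widetilde S\subset\mathbb{H}^2$, which is a convex region bounded by geodesics. Take the orbit $\widetilde P$ of a lift of $p$ and form the Delaunay tessellation of $\widetilde S$ relative to $\widetilde P$. To account for the cusps and the boundary, add ideal vertices at the parabolic fixed points, and add the orthogonal projections of orbit points to the boundary geodesics. The cells are then of three kinds:
\begin{itemize}
\item compact polygons inscribed in circles, with all vertices in $\widetilde P$;
\item cusp cells, i.e.\ horocyclic ideal polygons;
\item boundary cells, which are truncated polygons meeting a boundary geodesic.
\end{itemize}
This decomposition is $\Gamma$-invariant and descends to a cell decomposition of $S$. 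Embeddedness of the disc means that every edge joining two points of $\widetilde P$ has length at least $2r$, and every orbit point lies at distance at least $r$ from $\partial\widetilde S$.

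The core step is a monotonicity/area argument. Subdivide each compact cell into triangles from vertices of $\widetilde P$, giving $6-6\chi-3n-3b$ compact triangles by an Euler characteristic count. Each cusp contributes one horocyclic ideal triangle, and each boundary component contributes one quadrilateral with two right angles at the boundary. Summing the angles at the single interior vertex $p$ gives exactly $2\pi$. Then prove three comparison lemmas:
\begin{itemize}
\item a triangle with all sides $\ge 2r$ inscribed in a disc has angle sum at most $3\alpha(r)$;
\item a cusp triangle with compact side $\ge 2r$ has its two finite angles at most $\beta(r)$;
\item a boundary quadrilateral with legs $\ge r$ and top $\ge 2r$ has top angles at most $\gamma(r)$.
\end{itemize}
In each case equality holds only in the equilateral, horocyclic or Saccheri model. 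These follow from hyperbolic trigonometry, since the angles decrease as sides lengthen. Summing yields $(6-6\chi-3n-3b)\alpha(r)+2n\beta(r)+2b\gamma(r)\ge 2\pi$. Since $\alpha,\beta,\gamma$ are strictly decreasing in $r$, this forces $r\le r_{\chi,n,b}$, and uniqueness of the root comes from that same monotonicity together with the limits as $r\to 0$ and $r\to\infty$.

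The main obstacle is the triangulation step. Non-triangular Delaunay cells, obtuse triangles whose circumcenter lies outside, and boundary cells that do not have the clean quadrilateral shape all break the naive angle count. To handle them, I would follow DeBlois and use the centered dual: replace each cell by pieces around its circumcenter, and prove a "non-centered cells lose angle" inequality. This shows that any deviation from the equilateral, horocyclic or Saccheri shapes strictly decreases the available angle. The same inequality gives the equality case: at $r=r_{\chi,n,b}$ every piece must be extremal. This produces the stated cell decomposition, and the boundary length $2\arcsinh(\tanh r)$ is computed from the Saccheri quadrilateral. Conversely, if such a decomposition exists, the disc of radius $r$ at $p$ embeds, because each cell contains its vertex sectors disjointly.

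For existence, glue $6-6\chi-3n-3b$ equilateral triangles, $n$ horocyclic triangles and $b$ Saccheri quadrilaterals along a combinatorial pattern with a single interior vertex. One such pattern comes from a polygon side-pairing of the right type, for example a fan around a single vertex with edges paired so that only one vertex class arises. The angle equation \eqref{deblois} then makes the hyperbolic structure complete and smooth at $p$. Cusp completeness holds because horocyclic edges are paired isometrically with zero shear.
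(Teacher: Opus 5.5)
The paper does not prove this statement. It quotes it from \cite{deblois-romanelli} and uses it as a black box, so the relevant comparison is with the centered-dual argument of \cite{deblois2015} and \cite{deblois-romanelli}. Your outline follows that strategy, but as written it has a genuine gap exactly at the step that carries the content.

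\textbf{Two of your three comparison lemmas are false as stated.} The monotonicity you cite (``angles decrease as sides lengthen'') also fails: lengthening one side of a triangle increases the opposite angle.
\begin{itemize}
\item \emph{Triangles.} Take a cyclic triangle with sides $2r,2r,c$, where $c$ is slightly below the horocyclic limit $2\arcsinh(2\sinh r)$. It has all sides at least $2r$ and a genuine circumcircle. Yet for $r=1$ its angle sum tends to about $2.016$ as $c$ approaches that limit, whereas $3\alpha(1)\approx 1.980$. Nothing prevents such non-centered triangles from appearing in the Delaunay tessellation of a general surface.
\item \emph{Quadrilaterals.} Take a quadrilateral with right angles at the base, legs $r$ and $3r$, and a very short base. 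Its summit has length at least $2r$, since distance to a geodesic is $1$-Lipschitz. But its summit angles sum to nearly $\pi>2\gamma(r)$.
\end{itemize}
So the term-by-term estimate giving $(6-6\chi-3n-3b)\alpha(r)+2n\beta(r)+2b\gamma(r)\ge 2\pi$ does not follow.

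Your proposed repair is backwards at the level of single cells: a non-centered cell can carry \emph{more} angle than the model shape, not less. What the centered-dual argument supplies instead is a lower bound on the area of each centered dual 2-cell (roughly, a union of Delaunay cells assembled across their non-centered edges) in terms of its combinatorics, with equality only for the model shapes. The cells along boundary geodesics need their own version of this. That inequality is the main theorem of the cited papers, and your proposal only names it. The ``only if'' half, which produces the cell decomposition at $r=r_{\chi,n,b}$ and is the half the paper uses in the proof of Theorem~\ref{theo2}, rests on the strict form of the same inequality. So it is missing as well.

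\textbf{Counting slip.} The Euler characteristic count gives $q=2-2\chi-n-b$ compact triangles, as the paper computes in the proof of Theorem~\ref{theo2}. That is, there are $3q=6-6\chi-3n-3b$ angles at $p$, not $6-6\chi-3n-3b$ triangles.

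\textbf{Unjustified structural assumption.} That count holds only if every cusp cell is a single horocyclic triangle and every boundary component borders a single quadrilateral, which you assert without justification. In general a cusp cell is a horocyclic ideal polygon with $k\ge 1$ compact sides. You then get $K\ge n$ horocyclic triangles, $B\ge b$ boundary quadrilaterals, $q=2-2\chi-K-B$, and the bound $(6-6\chi-3K-3B)\alpha+2K\beta+2B\gamma$. Reducing this to $K=n$, $B=b$ requires $2\beta<3\alpha$ and $2\gamma<3\alpha$ from Remark~\ref{angles}.

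\textbf{Existence part.} This needs an explicit one-vertex gluing pattern for every admissible triple, orientable or not. The phrase ``a fan around a single vertex'' gestures at such a pattern but does not provide it.
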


Recall that a Saccheri quadrilateral is a 4-gon that has two opposite sides (\emph{legs}) of equal length, perpendicular to a third edge (\emph{base}), and such as the only remaining edge (\emph{summit}) intersects both legs with the same angle, called the \emph{summit angle}. For example, dividing an equilateral regular quadrilateral of sidelength $2r$ along the common perpendicular to a pair of opposite sides produces a Saccheri quadrilateral with summit length $2r$, legs length $r$ and summit angle $\gamma(r)$. We will denote this particular Saccheri quadrilateral by $SQ^{\gamma(r)}_{2r}$.

\begin{remark}\label{angles}
One can easily find explicit expressions of the angles $\alpha$, $\beta$ and $\gamma$ in the previous theorem as functions of the variable $r$:

\vspace{-0.7em}

\begin{equation*}
    \alpha(r)=2\sin^{-1}\left(\frac{1}{2\cosh r}\right),\;\; \beta(r)=\sin^{-1}\left(\frac{1}{\cosh r}\right),\;\; \gamma(r)=2\sin^{-1}\left(\frac{1}{\sqrt{2}\cosh r}\right).
\end{equation*}

    As a consequence, the relations

\vspace{-0.7em}
\begin{equation*}
    \sin(\beta)=2\sin(\alpha/2),\; \;\sin(\gamma/2)=\sqrt{2}\sin(\alpha/2),\; \;\cos(\alpha)=\cos^2(\gamma/2)
\end{equation*}
hold and, in particular, we see that $\alpha(r)<\gamma(r)<2\beta(r)$ for every $r>0$. Additionally, one can easily show that $2\beta(r)<3\alpha(r)$, $2\gamma(r)<3\alpha(r)$ and $\alpha(r)<\beta(r)<\gamma(r)$.
\end{remark}

\begin{definition}
    A complete hyperbolic surface $S$ with Euler characteristic $\chi$, $n$ cusps and $b$ boundary components is called an \emph{extremal surface} if there exists an embedded \emph{(extremal)} disc in $S$ of maximum radius $r_{\chi,n,b}$.
\end{definition}

Given a triplet $(\chi, n, b)$, DeBlois and Romanelli describe in \cite{deblois-romanelli} one concrete extremal surface of Euler characteristic $\chi$, $n$ cusps and $b$ boundary components, by a particular choice of how to arrange a number of triangles and quadrilaterals. The way we are going to handle here any extremal surface relies on a manipulation of such decomposition of any extremal surface into pieces. It is illustrative to apply our ideas first to the surface constructed in \cite{deblois-romanelli} before giving in the next section a detailed account of how our construction works in the general case.

Let $\Omega$ be the polygon on the left of  \autoref{fig:fig_const1}, which has been constructed by arranging four equilateral triangles of sidelength $2r$ with a common vertex at the origin, and then gluing to one of them the summit of a Saccheri quadrilateral $SQ^{\gamma}_{2r}$, and to another of them the compact side of a horocyclic ideal triangle of angle $\beta(r)$, with $r=r_{\chi,n,b}$. A solid line inside $\Omega$ represents an  orientation-preserving isometry that identifies the connected edges of $\Omega$. These transformations plus the hyperbolic reflection fixing the base of the Saccheri quadrilateral generate the group $\Gamma$ that uniformizes the orientable extremal surface $S$ of genus one with one cusp and one boundary component introduced in \cite{deblois-romanelli}.

As the $\Gamma$-images of $\Omega$ tessellate the hyperbolic disc $\mathbb{D}$, there is a complete neighborhood $\Omega'$ of the origin given by the union of a minimum number (six, in this case) of copies of $\Omega$. Let us denote by $P'\subset \Omega'$ the subpolygon formed by the $\Gamma$-images of the initial triangles and quadrilaterals that meet at the origin.
 Note that  if we sum the total angle at the origin of this collection of triangles and quadrilaterals, by \eqref{deblois} we have

\vspace{-0.7em}
 
$$
12\alpha(r)+2\beta(r)+2\gamma(r)=\big(6-6\chi-3n-3b\big)\alpha(r)+2n\beta(r)+2b\gamma(r)=2\pi.
$$

\vspace{0.2em}

\begin{figure}[!htbp]
\begin{subfigure}{.24\textwidth}
  \centering
  \includegraphics[width=0.96\linewidth]{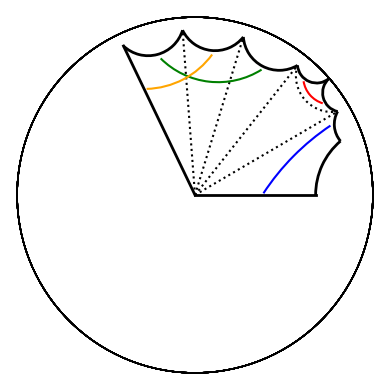}
\end{subfigure} 
\begin{subfigure}{.24\textwidth}
  \centering
  \includegraphics[width=0.96\linewidth]{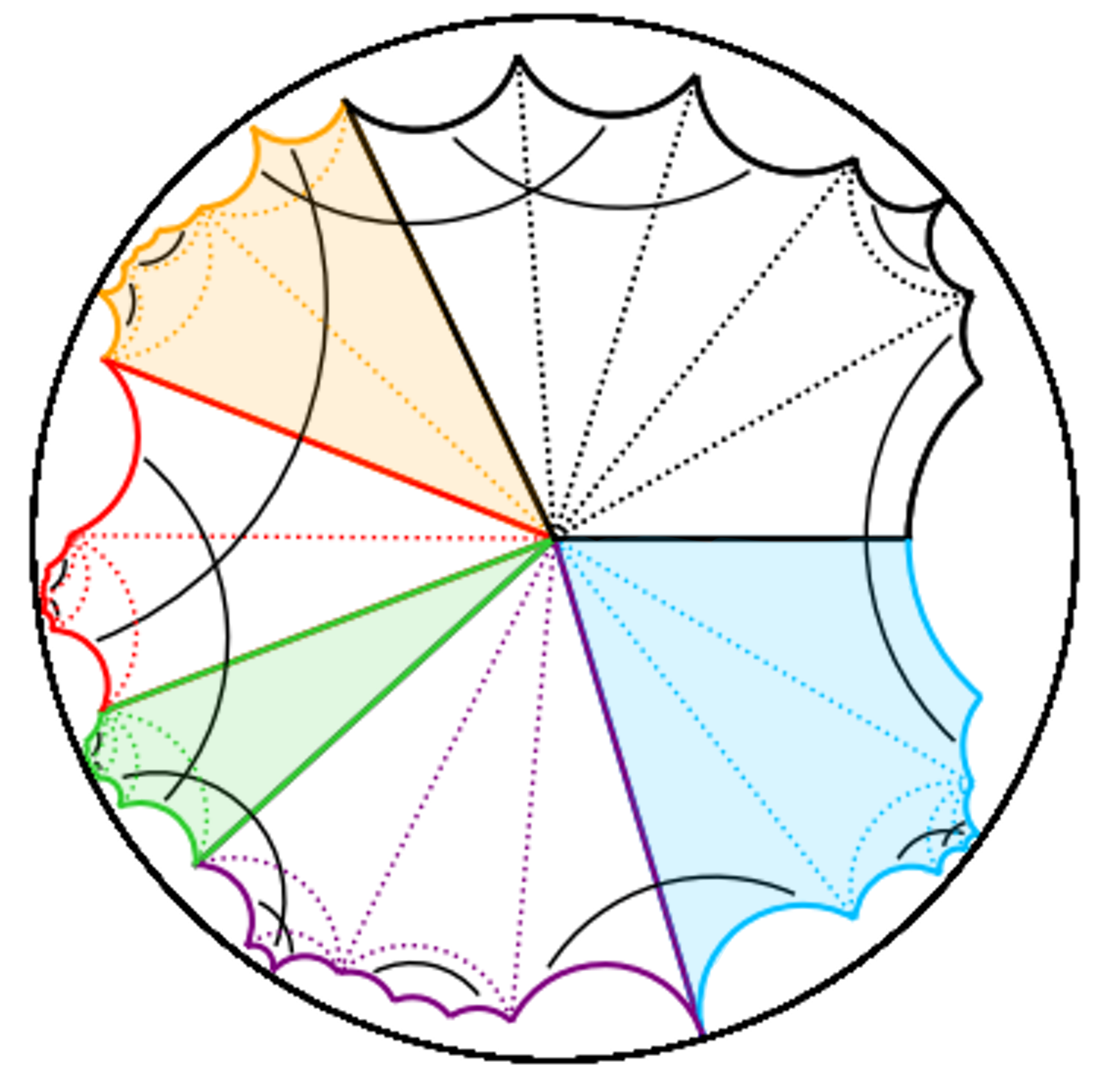}
\end{subfigure}
\begin{subfigure}{.24\textwidth}
  \centering
  \includegraphics[width=0.96\linewidth]{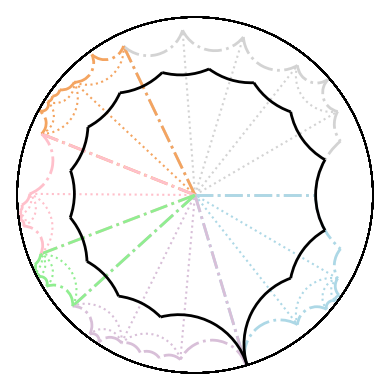}
\end{subfigure}%
\begin{subfigure}{.24\textwidth}
  \centering
  \includegraphics[width=0.96\linewidth]{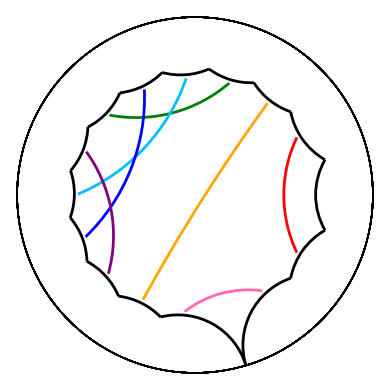}
\end{subfigure}
\caption{From left to right, the hyperbolic polygons $\Omega$ and $\Omega'$ defined in a particular case with  $\chi=-2$ and $n=b=1$, and the construction of a new fundamental domain determined by a collection of perpendicular bisectors.}
\label{fig:fig_const1}
\end{figure}

Let us now illustrate, for the particular case depicted in \autoref{fig:fig_const1}, the general construction we will describe in the next section.

Let $\mathscr{B}$ be the collection of the geodesic perpendicular bisectors of all the compact inner edges of the triangles that occur in the decomposition of $P'$ (all these edges have length $2r$), together with the bases of the Saccheri quadrilaterals in that decomposition. The cyclic pairwise intersection of the geodesic arcs in $\mathscr{B}$ determines the vertices of a new polygon $P$, illustrated in the third image  of \autoref{fig:fig_const1}.

Finally, a careful inspection of the action of the elements of $\Gamma$ on the edges of this new polygon $P$ produces some side-pairing identifications, represented in the right-hand side of \autoref{fig:fig_const1} by lines connecting the corresponding pairs of edges. These side-pairing identifications, together with the hyperbolic reflection in the remaining edge of $P$, generate a new NEC group $\Gamma'$, which is clearly a subgroup of $\Gamma$.

By construction, all the positive angles of  $P$ equal  $2\pi/3$ except those corresponding to the two vertices belonging to the bases of the two Saccheri quadrilaterals, that equal $\pi/2$.
In turn, the $\Gamma'$-orbits of vertices of $P$ have three representatives, with the obvious exception of the orbit of length one consisting simply of the unique improper vertex, and the orbit of length 2  given precisely by the two vertices in the bases of the Saccheri quadrilaterals.
 By Poincaré's Polygon Theorem, $P$ is a fundamental domain for $\Gamma'$.

Moreover, an area computation gives 

\vspace{-0.5em}

$$
\begin{array}{rl}
 \text{Area}(P)&=12(2\pi-\alpha-\pi-2\pi/3)+2(\pi-\pi/2-\beta)+2(2\pi-3\pi/2-\gamma)\\
 & =4\pi-12\alpha+\pi-2\beta+\pi-2\gamma=4\pi=-2\pi\chi=\text{Area}(S)
\end{array}
$$

\vspace{0.2em}

\noindent where we have used \eqref{deblois} and the Gauss-Bonnet Theorem, and we have denoted $\chi$ the Euler characteristic of $S=\mathbb{D}/\Gamma$. It follows that $\Gamma=\Gamma'$, so  the polygon $P$ just constructed is also a fundamental domain for $\Gamma$, which will be more suitable for our approach to extremal surfaces than the original fundamental domain $\Omega$. We generalize this construction to any arbitrary extremal surface in the next section.

\section{The uniformization of extremal surfaces}
\label{sec:uniformization}

 The following result provides a complete structural description of the Dirichlet domains for NEC groups corresponding to extremal disc centers, valid for surfaces with geodesic boundary and/or cusps. This yields a group‑theoretic characterization of extremal surfaces that complements the centered‑dual-complex viewpoint, producing an optimal framework for addressing some fundamental questions later on:
 
\begin{theorem}\label{theo2}
Let $S\simeq \mathbb{D}/\Gamma$ be an extremal hyperbolic surface of Euler characteristic $\chi<0$ with $n$ cusps and $b$ boundary components, uniformized by a NEC group $\Gamma$. If $z\in\mathbb{D}$ is the preimage of an extremal disc center in $S$ by the quotient map  $\pi:\mathbb{D}\longrightarrow \mathbb{D}/\Gamma \simeq S$, the Dirichlet domain $D_\Gamma(z)$ is a hyperbolic $m$-gon for 
$m=6-6\chi-2n-b$ with four different types of  edges distributed as:

\begin{itemize}
\item $n$ pairs of non-compact (type I) edges, one for each cusp. Each pair consists of two consecutive edges of $D_\Gamma(z)$ that are  identified by a parabolic element of $\Gamma$. 
    \item $b$ (type II) edges of length 
$
L=2\arcsinh\big(\tanh(r_{\chi,n,b})\big),
$ 
which account for the boundary components of $S$. Each 
of them is contained in the fixed point set of a hyperbolic reflection belonging to $\Gamma$.
\item $2b$ (type III) edges  of length 
$
l^*=\arccosh\big(\sqrt{2\cos(\alpha)}\big)+\arccosh\big(2\cos(\alpha/2)/\sqrt{3}\big),
$ 
for $\alpha=\alpha(r_{\chi,n,b})$, 
pairwise identified by  hyperbolic elements of $\Gamma$. Each pair consists of the two direct neighbors of one edge of type II.
\item $6-6\chi-4n-4b$ (type IV) edges of length  
$
l=2\arccosh\big(2\cos(\alpha/2)/\sqrt{3}\big),
$
pairwise identified by hyperbolic transformations or glide reflections in $\Gamma$.
\end{itemize}
Additionally, the vertices of $D_\Gamma(z)$ are either \emph{ideal vertices} (the $n$ ideal endpoints of edges of type I),  \emph{boundary vertices} (the $2b$ ends of edges of type II) or \emph{interior vertices} (all the rest). The angle of $D_\Gamma(z)$ is zero at the ideal vertices, $\pi/2$ at the boundary vertices and $2\pi/3$ at the interior vertices. All boundary vertices are at a distance $D=\arccosh \left( \sqrt{\cosh(2r_{\chi,n,b})} \right)$ from $z$ and get pairwise identified by $\Gamma$, while all the interior vertices are at a distance $d=\arccosh \left( \mathrm{cotan}  (\alpha/2)/ \sqrt{3} \right) $ from $z$ and  $\Gamma$ identifies them in triples.
\end{theorem}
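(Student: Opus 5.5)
The plan is to generalize the construction carried out for $\Omega$ in \autoref{sec:extremalsurfaces}. Start from \autoref{theo1}: $S$ has a cell decomposition whose interior vertices all coincide with the extremal center $p=\pi(z)$. The $2$-cells are $T=4-4\chi-2n-2b$ equilateral triangles of side $2r$, with $r=r_{\chi,n,b}$, together with $n$ horocyclic ideal triangles and $b$ Saccheri quadrilaterals $SQ^{\gamma}_{2r}$. The count $T$ comes from the fact that the corner angles at $p$ sum to $2\pi$, which is \eqref{deblois}; each triangle contributes three corners, each horocyclic triangle and each quadrilateral two. Lift the cells incident to $p$ around $z$ to obtain a star-shaped polygon $P'$. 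Then let $\mathscr{B}$ consist of the perpendicular bisectors of the inner compact edges of length $2r$, together with the bases of the lifted quadrilaterals. Successive elements of $\mathscr{B}$ meet and bound a polygon $P$ containing $z$.

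Next compute the geometry of $P$ cell by cell, which is elementary hyperbolic trigonometry.
\begin{itemize}
\item In an equilateral triangle of side $2r$, the bisectors meet at its circumcenter. This point lies at distance $d$ from the vertices, and the bisectors make angle $2\pi/3$ there. Right-triangle trigonometry, using $\cosh r=1/(2\sin(\alpha/2))$, gives $\cosh d=\cot(\alpha/2)/\sqrt3$. The distance from the circumcenter to the midpoint of a side is $\arccosh(2\cos(\alpha/2)/\sqrt3)$, so a bisector segment shared by two adjacent triangles has length $l$.
\item In a quadrilateral, the bisector of a leg meets the base perpendicularly at a point at distance $D$ from the apex, where $\cosh D=\cosh r\cosh r'$ for the appropriate leg length. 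This yields $\sqrt{\cosh 2r}$. The base portion between the two such points has length $L$, which is given by \autoref{theo1}.
\item Along a triangle adjacent to a quadrilateral, the edge of $P$ joins a circumcenter to a boundary vertex. Its length is the sum of the two pieces, giving $l^*$ after simplifying with the relations of \autoref{angles}.
\item In a horocyclic triangle, the two bisectors of its compact sides run to the ideal vertex, producing the two type I edges.
\end{itemize}
Counting edges gives $2n$ of type I, $b$ of type II, $2b$ of type III, and the remaining inner edges of length $2r$ give $3T/2-\ldots$ type IV bisector edges, which totals $m=6-6\chi-2n-b$. I will verify this by checking $3T-2n-2b$ contributions, each inner edge being counted twice.

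Then produce side pairings. The elements of $\Gamma$ that glue the cells of $P'$ (these are the edge identifications of the cell decomposition, seen from $z$) map bisector edges of $P$ onto bisector edges of $P$. Edges around a cusp are paired by the parabolic element fixing the ideal point. Each base lies on the axis of a reflection of $\Gamma$. Vertex cycles are as follows: the circumcenters are identified in triples, because each triangle has three vertices all projecting to $p$, so its circumcenter is seen from $z$ three times with total angle $3\cdot 2\pi/3$. Boundary vertices come in pairs with total angle $\pi$, as required for a reflection edge, and ideal vertices are parabolic cycles. By Poincaré's Polygon Theorem, $P$ is a fundamental domain for the group $\Gamma'\le\Gamma$ generated by these pairings. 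The area of $P$, computed exactly as in the example (angle defect and \eqref{deblois}), equals $-2\pi\chi=\text{Area}(S)$, so $\Gamma'=\Gamma$.

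The main obstacle is showing that $P$ is exactly $D_\Gamma(z)$, and not merely some fundamental domain. The route is this: every edge of $P$ lies on the bisector between $z$ and $g(z)$ for the pairing element $g$. This holds because $g(z)$ is the opposite vertex of the adjacent lifted triangle, or the reflection of $z$ in the base. Hence $D_\Gamma(z)\subseteq P$, since $D_\Gamma(z)$ is the intersection of all half-planes containing $z$ and $P$ is cut out by some of them. Both are fundamental domains of equal area, so they coincide. Care is needed to show that $P$ is convex and that the bisectors intersect cyclically as claimed, particularly near cusps and quadrilaterals. This relies on all angles of $P$ being less than $\pi$.
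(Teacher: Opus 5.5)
Your endgame is a legitimate variant of the paper's, but there is a gap earlier in the argument.

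\textbf{The route.} You build the side pairings of $P$ explicitly, apply Poincar\'e's theorem to $P$, and use the area to get index one. This is exactly the worked example of Section~2. The general proof instead defines $P$ as the Voronoi cell of $z$ with respect to the $z$-vertices and generalized $z$-vertices, so convexity and $D_\Gamma(z)\subseteq P$ are automatic. It then shows that no two interior points of $P$ are $\Gamma$-equivalent, because all pieces of $P$ meet at $z$ in distinct sectors. Equal areas then give $P=D_\Gamma(z)$ with no cycle conditions to check. Your version costs more verification, but it yields the pairing statements of the theorem explicitly.

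\textbf{The gap.} You never prove the local combinatorics of $P'$ on which every later step rests, namely the paper's Claim~2 (and the easier Claim~1 about twins). Nothing you wrote excludes the following possibilities around $z$: the compact side of a horocyclic triangle is shared with the compact side of another horocyclic triangle, or with the summit of a Saccheri quadrilateral, or two summits are adjacent. In such a configuration the corresponding bisector would join two ideal points, or an ideal point to a base geodesic. Then $P$ would acquire vertices that are not circumcentres, and the edge types and counts would change. Your angle list, convexity, vertex cycles and area computation would all break down. Your remark that care is needed ``near cusps and quadrilaterals'' points at the right place. However, what must be proved there is the adjacency pattern; convexity follows from it.

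\textbf{How to fix it.} The paper argues with the parabolic elements $\gamma_k$ and the hyperbolic elements $\eta_l$. Alternatively, argue topologically. Cusps and boundary components are in bijection with horocyclic triangles and quadrilaterals. Hence each horocyclic triangle has its two infinite sides glued to each other, giving a punctured monogon. Each quadrilateral has its legs glued, giving a collar. Both are bounded by a single loop at $p$. Gluing two such loops would close off a component of Euler characteristic $0$, contradicting connectedness and $\chi<0$. This is exactly where the hypothesis $\chi<0$ enters.

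\textbf{Smaller slips.}
\begin{enumerate}
\item Your own angle-sum argument gives $3T=6-6\chi-3n-3b$, so $T=2-2\chi-n-b$, not $4-4\chi-2n-2b$. Then $P'$ contains $3T$ equilateral triangles with $6T-2n-2b$ radial-side incidences not shared with horocyclic triangles or quadrilaterals. Hence there are $3T-n-b=6-6\chi-4n-4b$ type IV edges; your two errors happen to cancel.
\item A horocyclic triangle has only one compact side. The two type I edges at a cusp are the bisectors of the compact sides of the two twins.
\item In a quadrilateral the relevant bisector is that of the summit (its symmetry axis), not of a leg. The distance $D$ comes from the right triangle with legs $r$ and $L/2$.
\end{enumerate}
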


\begin{proof}
Consider a decomposition of $S$ into equilateral and horocyclic triangles and Saccheri quadrilaterals like the one described in \autoref{theo1}. This decomposition has $n$ horocyclic ideal triangles and $b$ Saccheri quadrilaterals. If $q$ is the number of equilateral triangles, an area computation, together with Gauss-Bonnet Theorem, gives
\begin{equation}\label{eqteo1}
    -2\pi\chi=\text{Area}(S)=q(\pi-3\alpha)+n(\pi-2\beta)+b(\pi-2\gamma)
\end{equation}

\noindent where $\alpha=\alpha(r_{\chi,n,b})$, $\beta=\beta(r_{\chi,n,b})$ and $\gamma=\gamma(r_{\chi,n,b})$ with the notation of \autoref{theo1}. 

From \eqref{eqteo1} we can compute
$$
q(\pi-3\alpha)=\pi(-2\chi-n-b)+2n\beta+2b\gamma
$$
which, using \eqref{deblois}, gives
$$
\begin{array}{rcl}
q(\pi-3\alpha)& = &\pi(-2\chi-n-b)+2\pi-(6-6\chi-3n-3b)\alpha \\
& = & (\pi-3\alpha) (2-2\chi-n-b) 
\end{array}
$$
and therefore the number of equilateral triangles is $q=2-2\chi-n-b$.

Given the quotient map $\pi:\mathbb{D}\longrightarrow \mathbb{D}/\Gamma \simeq S$, we can choose three complete collections of representatives $\mathcal{T}=\{T_1,\ldots, T_q\}$, $\mathcal{S}=\{S_1,\ldots, S_n\}$ and  $\mathcal{Q}=\{Q_1,\ldots, Q_b\}$ of, respectively,  the equilateral triangles, the horocyclic ideal triangles and the Saccheri quadrilaterals of this decomposition, such that each element in  $\mathcal{T} \cup \mathcal{S} \cup \mathcal{Q}$ shares one edge with another of its elements. In particular, $\Omega= T_1\cup \ldots \cup T_q \cup S_1 \cup \ldots \cup S_n \cup Q_1 \cup \ldots \cup  Q_b $ is a hyperbolic polygon.

As our starting decomposition of $S$ has just one vertex in the interior of $S$, all the proper vertices of the polygons in $\mathcal{T} \cup \mathcal{S} \cup \mathcal{Q}$ belong to the same $\Gamma$-orbit, with the exception of the vertices at the bases of the Saccheri quadrilaterals. Indeed, we can distinguish two types of proper vertices in $\Omega$:
\begin{itemize}
 \item The vertices lying at the bases of Saccheri quadrilaterals, that are pairwise identified  by $\Gamma$ (as otherwise the number of boundary components would be smaller than $b$). The sum of the angles in each $\Gamma$-orbit of this kind equals $\pi$.
    \item All the remaining proper vertices, which belong to the same $\Gamma$-orbit. The sum of the corresponding angles is $3q\alpha+2n\beta+2b\gamma=2\pi$ by \eqref{deblois}.
\end{itemize}

In particular the angle of $\Omega$ at any vertex of the second kind, which is at most $q\alpha+n\beta+b\gamma$, cannot exceed $\pi$, hence $\Omega$ is a convex polygon.

It follows from Poincaré's Polygon Theorem that $\Omega$ is a fundamental domain of the NEC group $\Gamma$. The associated tessellation of $\mathbb{D}$ given by $\mathbb{D}=\{\gamma(\Omega):\gamma\in\Gamma\}$ can be re-interpreted as a tessellation by $\Gamma$-translates of the pieces of $\Omega$, namely
$$
\mathbb{D} = \left(\bigcup_{T\in \mathcal{T}}\{\gamma(T):\gamma\in\Gamma\}\right) \bigcup \left(\bigcup_{S\in \mathcal{S}}\{\gamma(S):\gamma\in\Gamma\}\right) \bigcup \left(\bigcup_{Q \in \mathcal{Q}}\{\gamma(Q):\gamma\in\Gamma\}\right).
$$

Let $s=3q=3(2-2\chi-n-b)$ and let $z\in\mathbb{D}$, which we may assume to be the origin, be a representative of the unique  vertex of the original cell  decomposition of $S$  that lies in the interior of $S$. We can now put a number of pieces of this last tessellation together to form a polygon $P'$ that is a complete neighborhood around $z$. More precisely, these pieces that form $P'$ are a collection $\mathcal{T'}$ of $s$ equilateral triangles, a collection $\mathcal{S'}$ of $2n$ horocyclic ideal triangles and a collection $\mathcal{Q'}$ of $2b$ Saccheri quadrilaterals, all them meeting at this point $z$. The number of pieces of each type follows from the number of angles $\alpha$, $\beta$ and $\gamma$ around $\pi(z)$, as the three vertices of the equilateral triangles, the two proper vertices of the horocyclic ideal triangles and the two vertices of angle $\gamma$ in the Saccheri quadrilaterals all project to the same point via $\pi$.

The polygon $P'$ is \emph{centered} at $z$, in the sense that all the vertices of $P'$ that are $\Gamma$-related to $z$ (we will call them  $z$\emph{-vertices}) lie at a distance $2r_{\chi,n,b}$ from $z$. Each vertex of $P'$ is either a $z$-vertex, an ideal vertex at the boundary of $\mathbb{D}$, or an endpoint of the base of a Saccheri quadrilateral. Note that there is another obvious kind of vertices in the tessellation of $\mathbb{D}$ at a distance $2r_{\chi,n,b}$ from $z$: the vertices $\gamma(z)\in\mathbb{D}$, where $\gamma\in\Gamma$ is the hyperbolic reflection in the base of a Saccheri quadrilateral in the construction of $P'$. These points will be called \emph{generalized} $z$-\emph{vertices}.

Let $\{T'_{1}, \ldots, T'_{q}\}\subset \mathcal{T'}$, $\{S'_{1}, \ldots, S'_{n}\}\subset \mathcal{S}'$ and $\{Q'_{1}, \ldots , Q'_{b}\}\subset \mathcal{Q}'$ be a complete set of representatives of each of the three type of pieces of $P'$.

\

\textbf{Claim 1:} \emph{Every horocyclic ideal triangle $S'_k\in \mathcal{S}'$ shares a non-compact edge with another horocyclic  triangle $\widetilde{S}'_k\in \mathcal{S}'$, which will be called the \emph{twin} of $S'_k$. Accordingly, every Saccheri quadrilateral $Q'_l\in \mathcal{Q}'$ shares a leg with another Saccheri quadrilateral $\widetilde{Q}'_l\in\mathcal{Q}'$, which will be called the \emph{twin} of $Q'_l$.}

\

For any given $k\in \{1,\ldots, n\}$, there exists a parabolic isometry $\gamma_k\in\Gamma$ that identifies the non-compact edge of $S'_k$ that has $z$ as an endpoint with the other non-compact edge of $S'_k$ (if it was not the case there would be less than $n$ cusps in the quotient surface $S=\mathbb{D}/\Gamma$). Notice also  that $\gamma_k^{-1}(S'_k)$ is a horocyclic ideal triangle that meets $z$, necessarily different from $S_i'$ for all $i \in \{1,\ldots, n\}$, and obviously $S'_k$ and $\gamma_k^{-1}(S'_k)=\widetilde{S}'_k\in\mathcal{S}'$ share one edge. We can mimic this argument for Saccheri quadrilaterals due to the existence of a hyperbolic isometry $\eta_l\in\Gamma$ that identifies the leg of $Q'_l$ that has $z$ as an endpoint with the other leg of $Q'_l$, where $l\in\{1,\ldots,b\}$. We have thus proved Claim 1. 

As a consequence, observe that each $\Gamma$-orbit of horocyclic ideal triangles has two elements in $P'$, namely $S'_k$ and its twin $\widetilde{S}'_k$, for $1\le k\le n$. Correspondingly, each $\Gamma$-orbit of Saccheri quadrilaterals has two representatives in $P'$, which are $Q'_l$ and its twin $\widetilde{Q}'_l$, for $1\le l \le b$.

\

\textbf{Claim 2:} \emph{Every horocyclic ideal triangle in $\mathcal{S}'$ shares its compact edge with an equilateral triangle in $\mathcal{T}'$. Analogously, the summit of any Saccheri quadrilateral in $\mathcal{Q}'$ is also an edge of some equilateral triangle in $\mathcal{T}'$.}

 \ 
 
If the statement in Claim 2 was not true, by Claim 1 we would have one of the situations exhibited in \autoref{fig:forbidden}. In the first case, $\gamma_j^{-1}(S'_k)$ would be a horocyclic ideal triangle in the tessellation,  with vertex at $z$, belonging to the same $\Gamma$-orbit as $S'_k$, so we conclude that $$\gamma_j^{-1}(S'_k)=\widetilde{S}'_k=\gamma_k^{-1}(S'_k).$$

In turn, this implies that $\gamma_j=\gamma_k$ and necessarily $j=k$, which is a contradiction. Similar arguments apply to the other two cases in Claim 2.

\begin{figure}[!htbp]
\begin{subfigure}{0.33\linewidth}
  \centering
  \includegraphics[width=0.7\textwidth]{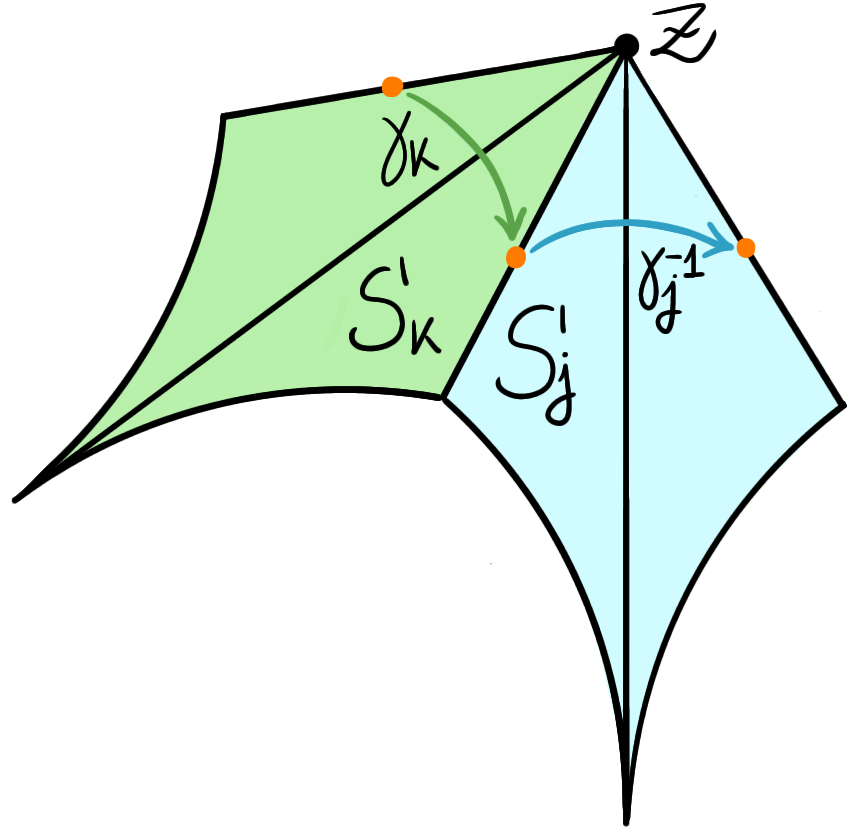}
\end{subfigure}%
\begin{subfigure}{0.33\linewidth}
  \centering
  \includegraphics[width=0.78\textwidth]{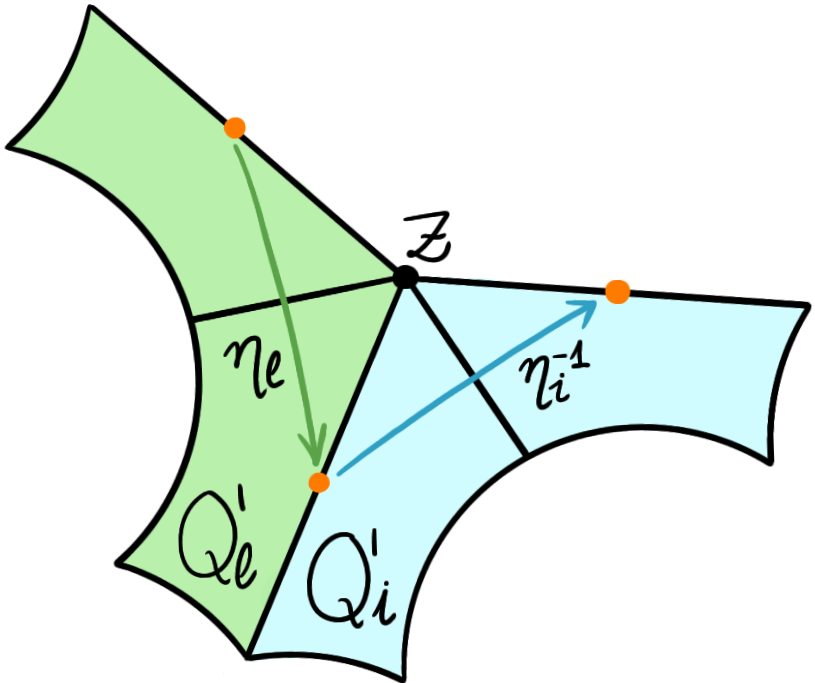}
\end{subfigure}
\begin{subfigure}{0.33\linewidth}
  \centering
  \includegraphics[width=0.95\textwidth]{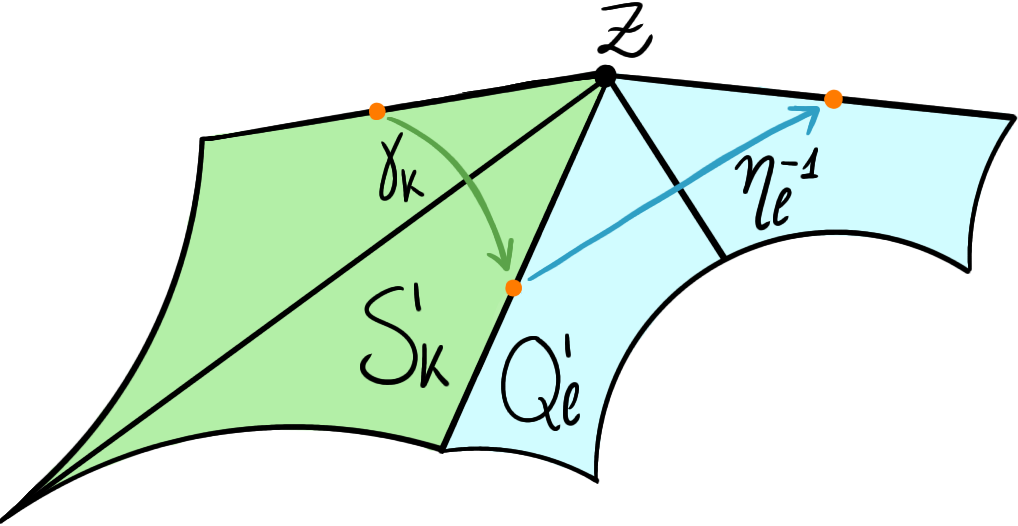}
\end{subfigure}
\caption{Three configurations that cannot occur in $P'$.}
\label{fig:forbidden}
\end{figure}

Consider the set $\mathscr{B}$ given by the perpendicular bisectors of the geodesic rays that connect each $z$-vertex and each generalized $z$-vertex with the point $z$. Note that the second kind of bisectors are simply  the complete geodesics that contain the base of the Saccheri quadrilaterals in $\mathcal{Q}'$. The location of the rest 
of these bisectors inside the equilateral triangles, the Saccheri quadrilaterals and the horocyclic ideal triangles in $P'$, is shown in \autoref{fig:bisectors}. Two bisectors meet at the barycenter of every equilateral triangle, with angle  $2\pi/3$, while there is only one bisector segment inside every Saccheri quadrilateral (namely, the common perpendicular to the base and the summit) or horocyclic ideal triangle (namely, the perpendicular bisector of its compact edge). Note that one ideal endpoint of the latter is precisely the ideal vertex of the horocyclic triangle, so the bisector segments inside two twin horocyclic ideal triangles share their ideal endpoint, as in \autoref{fig:bisectors}.

\begin{figure}[!htbp]
\begin{subfigure}{.315\textwidth}
  \centering
  \includegraphics[width=.58\linewidth]{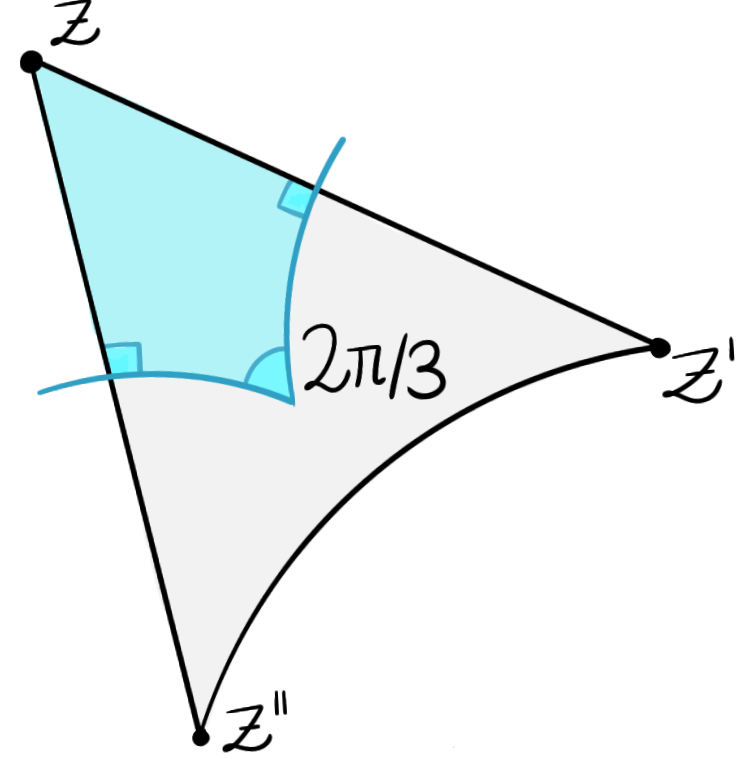}
  \label{fig:sfig1}
\end{subfigure}%
\begin{subfigure}{.362\textwidth}
  \centering
  \includegraphics[width=0.9\linewidth]{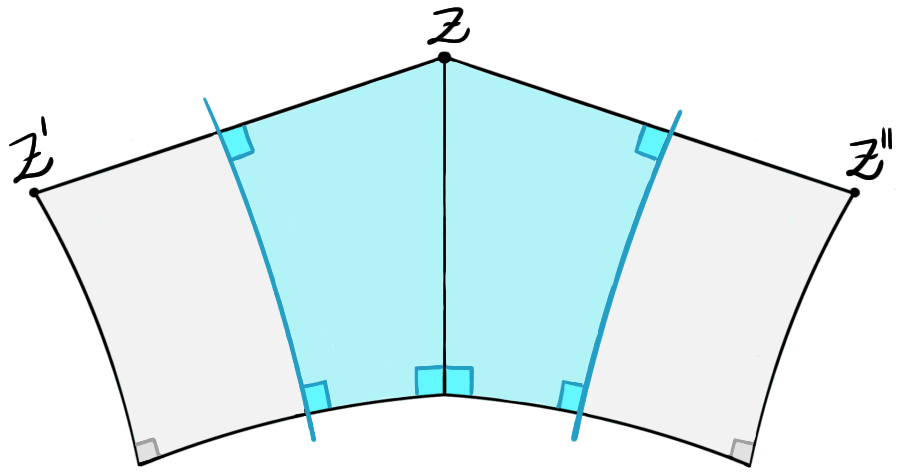}
  \label{fig:sfig2}
\end{subfigure}
\begin{subfigure}{.315\textwidth}
  \centering
  \includegraphics[width=.6\linewidth]{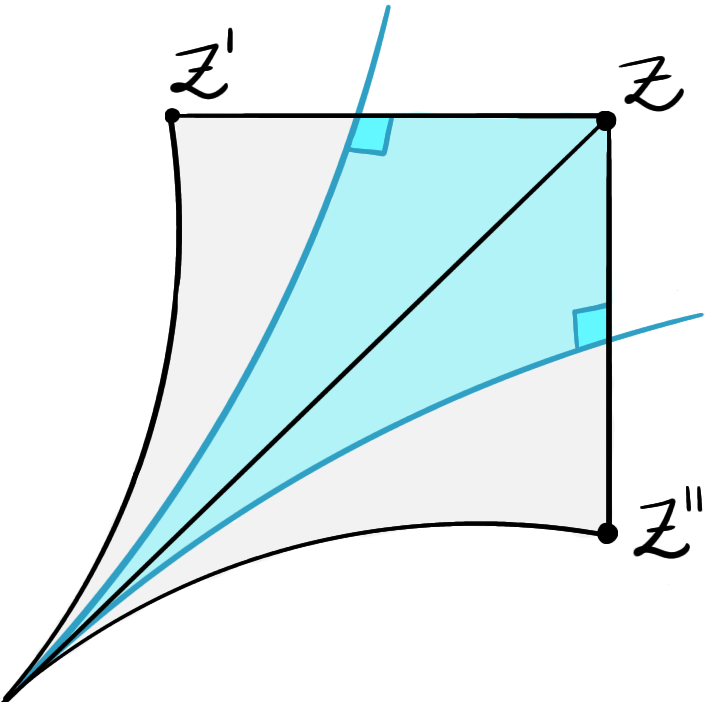}
  \label{fig:sfig3}
\end{subfigure}%
\caption{{Perpendicular bisectors inside an equilateral triangle, two twin Saccheri quadrilaterals and two twin horocyclic ideal triangles.}}
\label{fig:bisectors}
\end{figure}

Each of the perpendicular bisectors in $\mathscr{B}$ defines a hyperbolic half-plane containing the point $z$. Let $P$ be the convex hyperbolic polygon obtained by intersecting these half-planes. In other words, $P$ is the Voronoi $2$-cell at $z$ associated to the finite set $\mathcal{Z}$ given by the point $z$, the $z$-vertices and the generalized $z$-vertices. This set $\mathcal{Z}$ is contained in the $\Gamma$-orbit of $z$, $\pi^{-1}(\pi(z))=\{\gamma(z):\gamma\in\Gamma\}$, so it follows that $D_\Gamma(z) \subset P$, where $D_\Gamma(z)$ stands for the Dirichlet fundamental domain of $\Gamma$ centered at $z$. 

Consider the decomposition of $P$ determined by the following collection of lines:

\begin{itemize}
    \item The geodesic lines connecting $z$ to each vertex of $P$ of angle $2\pi/3$ (which are, by construction, the bisectors of these angles).
Hyperbolic trigonometry shows that the distance from $z$ to any of these vertices equals 
$d=\arccosh \left( \mathrm{cotan}  (\alpha/2)/ \sqrt{3} \right) $.

   \item The geodesic lines connecting $z$ to each vertex of angle zero (ideal vertices). These are simply the common edges to all the pairs of twin horocyclic ideal triangles.
    \item The geodesic lines  connecting $z$ to the midpoint of each edge with two right angles (these are the common legs to the pairs of twin Saccheri quadrilaterals in $P'$, as in the central image of \autoref{fig:bisectors}).
\end{itemize}

By Claim 2, we know that the decomposition of $P$ defined by the preceding collection of lines divides $P$ into pieces like those depicted in \autoref{fig:decomposition}.

\begin{figure}[!htbp]
\begin{subfigure}{.33\textwidth}
  \centering
\includegraphics[width=.57\linewidth]{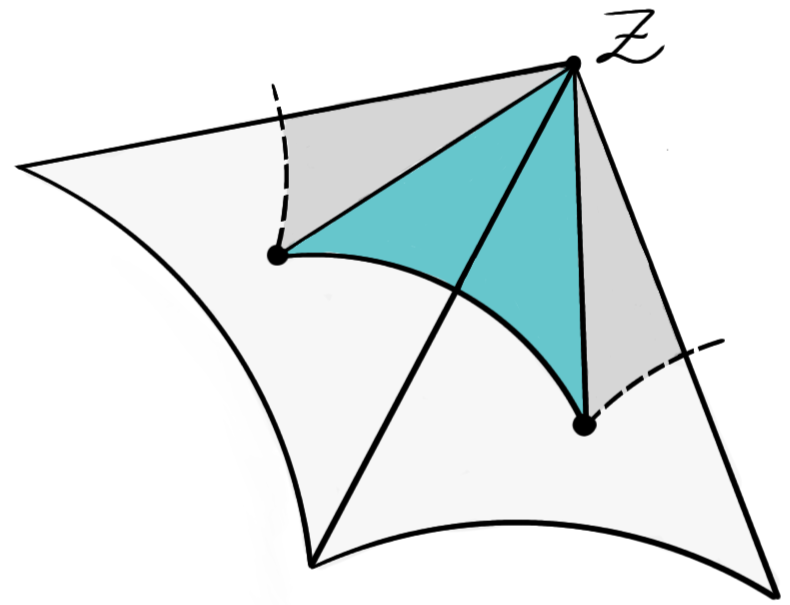}
\end{subfigure}%
\begin{subfigure}{.34\textwidth}
  \centering
\includegraphics[width=.57\linewidth]{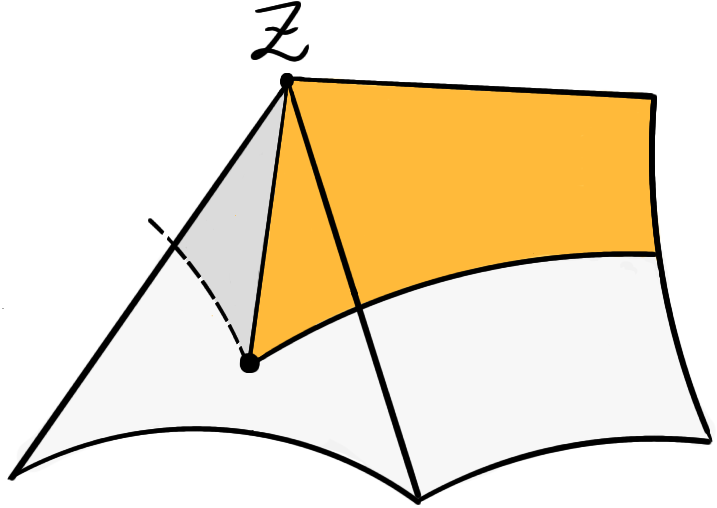}
\end{subfigure}
\begin{subfigure}{.32\textwidth}
  \centering
\includegraphics[width=.84\linewidth]{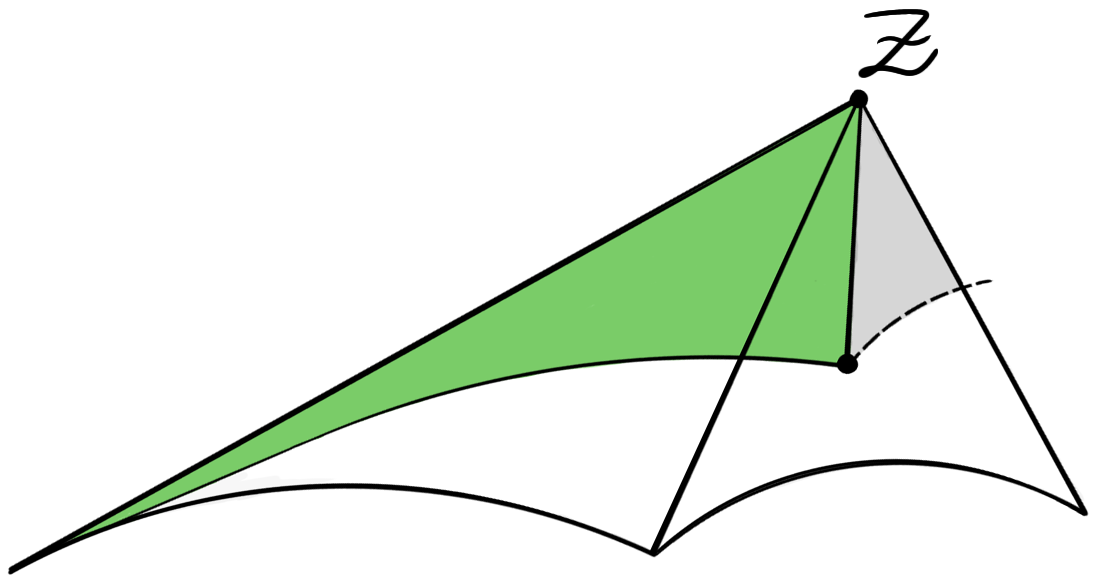}
\end{subfigure}%
\caption{{The three different pieces of the decomposition of $P$.}}
\label{fig:decomposition}
\end{figure}

Thus, $P$ gets decomposed into $2n$ ideal triangles, $2b$ quadrilaterals and $s-n-b$ compact triangles. Indeed, the number of compact triangles here should coincide with the number of $z$-vertices that do not belong either to a horocyclic triangle or to a Saccheri quadrilateral in $P'$, and there are $s-n-b$ such vertices due to Claim 2. Moreover, the angles of these pieces are, by construction, as shown in \autoref{fig:theor_angles}.

\begin{figure}[!htbp]
\begin{subfigure}{.49\linewidth}
	\centering
\includegraphics[width=0.83\linewidth,draft=False]{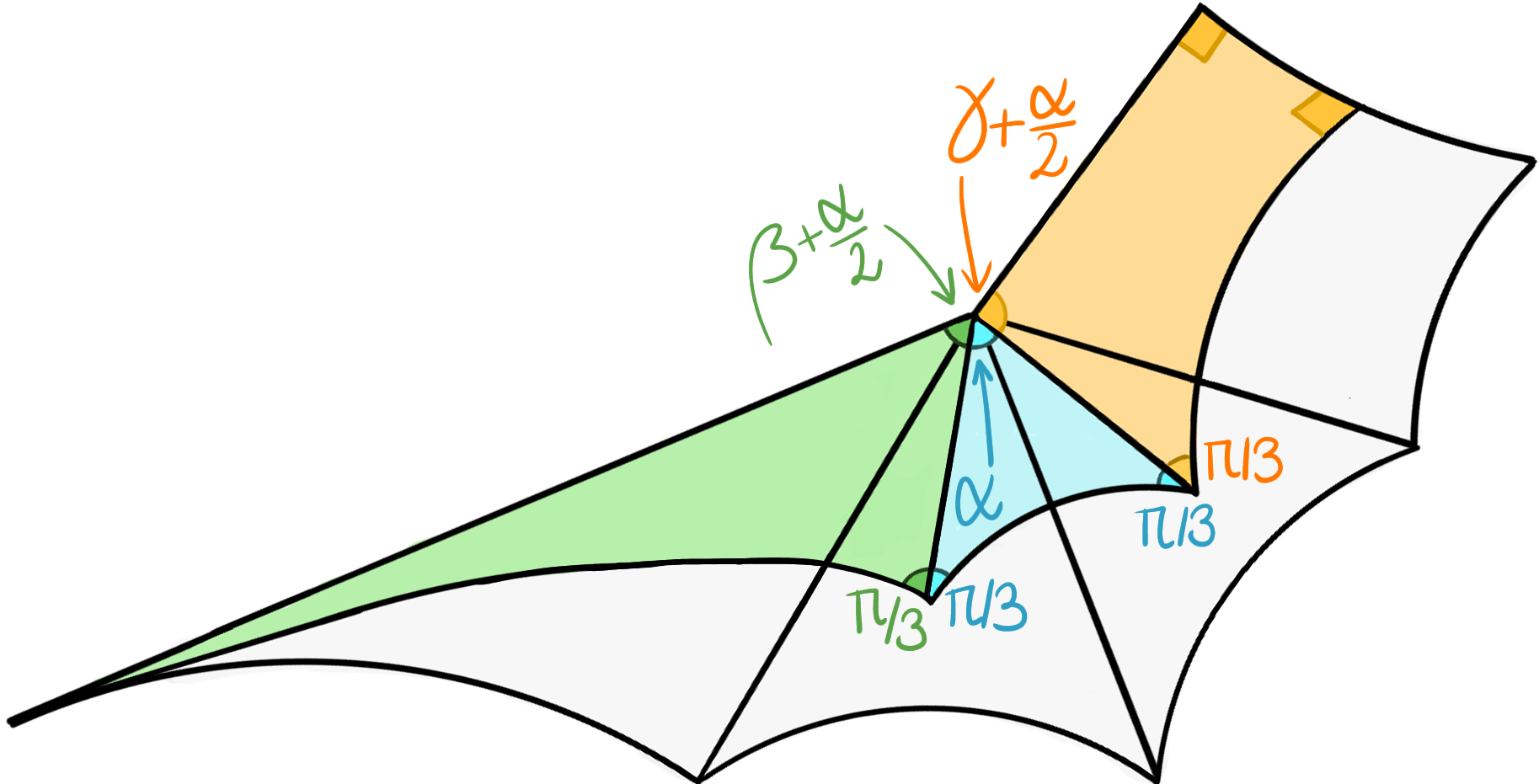}
\end{subfigure}%
\begin{subfigure}{.49\textwidth}
	\centering
\includegraphics[width=0.83\linewidth,draft=False]{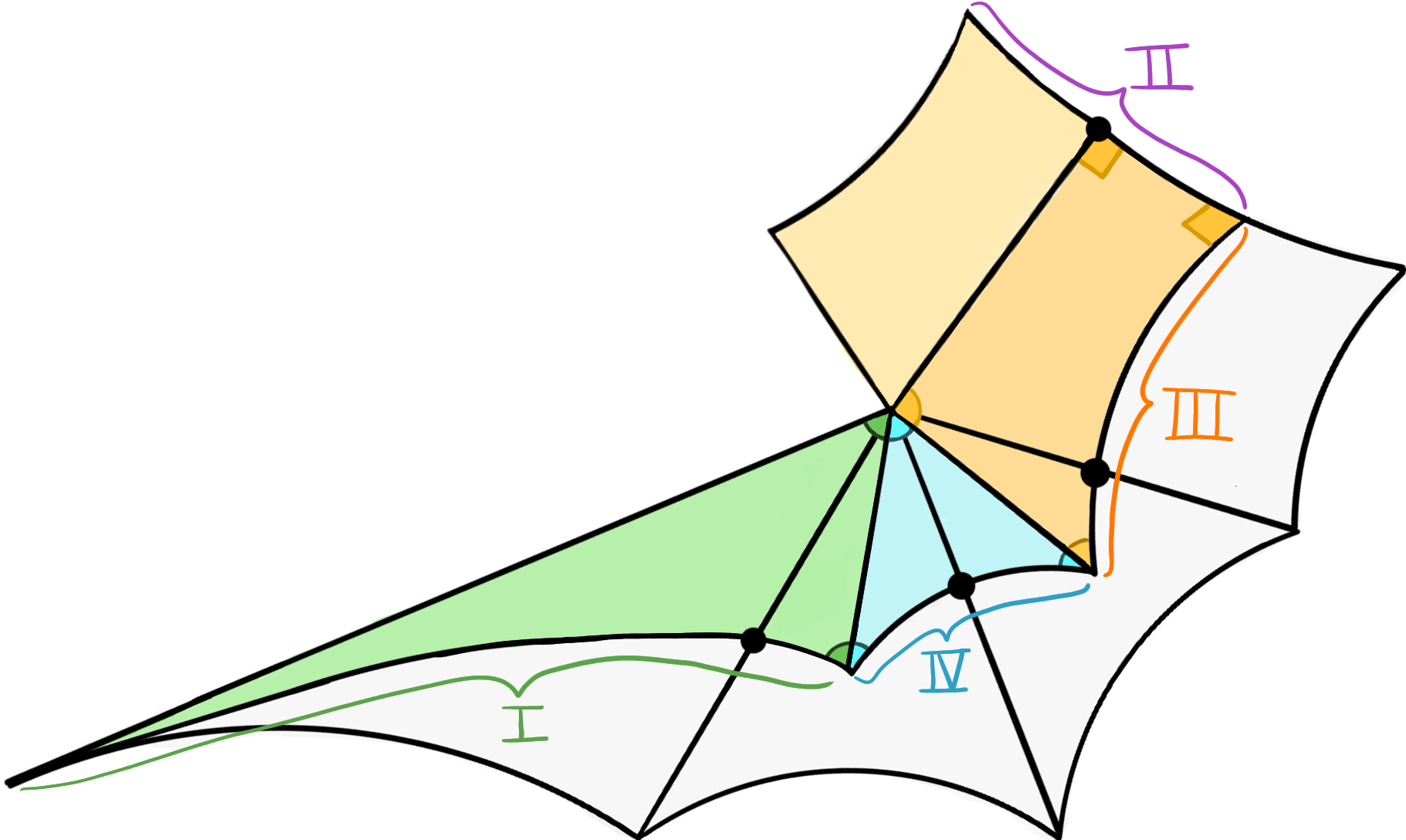}
\end{subfigure}
	\caption{On the left, the angles of the three kind of pieces in the decomposition of $P$. On the right, the four  kind of edges of $P$ and the points at its boundary lying at distance $r_{\chi,n,b}$ of $z$.}
 \label{fig:theor_angles}
\end{figure}

A direct computation of the total area of these pieces, together with formula in \eqref{deblois} and  Gauss-Bonnet Theorem shows that
\begin{align*}
    \text{Area}(P)&=(s-n-b)\left(\pi-\frac{2\pi}{3}-\alpha \right)+2n\left(\pi-\frac{\pi}{3}-\beta-\frac{\alpha}{2}\right)+2b \left(2\pi-\frac{4\pi}{3}-\gamma-\frac{\alpha}{2} \right)\\
    & = (6-6\chi -4n-4b) \left( \frac{\pi}{3}-\alpha \right)+2(n+b) \left( \frac{2\pi}{3}-\frac{\alpha}{2} \right) + (6-6\chi -3n-3b)\alpha -2\pi \\
    &=-2\pi\chi=\text{Area}(S)
\end{align*}

We claim that there are no $\Gamma$-identified points in the interior of $P$. To prove this, take a small disc centered at $z$, and notice that each of the pieces of $P$ is determined by a different circle sector around $z$, as all the pieces meet at $z$.  If $p, p'$ were two different points in the interior of $P$ belonging to two pieces $\rho$, $\rho'$ of this decomposition and $\eta(p)=p'$ for some $\eta\in\Gamma$, then $\eta$ would identify $\rho$ and $\rho'$, hence the corresponding circle sectors around $z$, which is a contradiction. 

Putting together that no pair of points in the interior of $P$ are $\Gamma$-identified, that  $D_\Gamma(z)\subset P$ and that $\text{Area}(P)=\text{Area}(S)=\text{Area}\big(D_\Gamma(z)\big)$, we conclude that in fact $P$ agrees with the Dirichlet fundamental domain $D_\Gamma(z)$. By the aforementioned decomposition into pieces, the number of edges of $D_\Gamma(z)$ equals
$$m=(s-n-b)+2n+3b=s+n+2b=6-6\chi-2n-b.$$

In order to complete the proof of all the statements in \autoref{theo2}, we need to compute the length of the four different kinds of edges of $D_\Gamma(z)$, which are represented in 
\autoref{fig:theor_angles}.

By construction, there are exactly $2n$ non-compact edges, the edges of type I in \autoref{fig:theor_angles}. Denoting by $l$ the length of the edges of type IV, by the hyperbolic law of cosines we have
$$
\cosh(l/2)=2\cos(\alpha/2)/\sqrt{3}.
$$

There are $2b$ edges of type III, one for each quadrilateral in our decomposition of $D_\Gamma(z)$. If we denote by $l^*$ the length of this kind of edges, as $l^*-l/2$ is the length of the common perpendicular to the base and the summit of a Saccheri quadrilateral in $\mathcal{Q}'$ (see \autoref{fig:bisectors}), by applying [\cite{beardon}, Theorem 7.17.1] and \autoref{angles} we see that
$$
\cosh(l^*-l/2)=\cosh(r_{\chi,n,b})\sin(\gamma)=\sqrt{2\cos(\alpha)}.
$$

Finally, the $b$ edges of type II, which correspond to the $b$ boundary components of $S$, have length $L=2\arcsinh(\tanh(r_{\chi,n,b}))$  by \autoref{theo1}. The distance  from $z$ to any vertex of this kind of edges equals
$$D=\arccosh \left( \cosh (r_{\chi,n,b}) \cosh (L/2) \right)=\arccosh \left( \sqrt{\cosh(2r_{\chi,n,b})}\right)$$ by a straightforward hyperbolic trigonometry computation.
\end{proof}

\begin{remark} \label{rem:6edges}
From the proof we see that there are exactly $m=6-6\chi-2n-b$ points in the boundary of the polygon $D_\Gamma(z)$  at minimal distance $r_{\chi,n,b}$ from $z$ (one per edge). \autoref{fig:theor_angles} shows the location of these points for each type of edge. 

\end{remark}

\begin{definition} \label{def:extrdiscconf}
A hyperbolic polygon $P$ together with a pattern of side-pairing identifications satisfying all the conditions in \autoref{theo2} is called an \emph{extremal disc configuration based on} $P$. Two extremal disc configurations based on the polygons $P_1$ and $P_2$
 are considered \emph{equivalent} if there exists an isometry $\tau$ of the hyperbolic plane that maps $P_1$ into $P_2$ and conjugates its side-pairing pattern into the other. We refer to the equivalence classes under this definition as \emph{extremal disc configurations}. 
\end{definition}

\begin{example}\label{ex_sph_three}
When $S$ is a non-orientable surface we have $\chi=2-g-n-b$, which implies $m=6g+4n+5b-6$. Using the fact that  $\chi<0$ one can easily see that  the number $m$ of edges of $D_\Gamma(z)$ is strictly greater than $6+b$. On the other hand, for the orientable case we have $\chi=2-2g-n-b<0$, which implies $m=12g+4n+5b-6\geq 6+b$, with equality if and only if $g=0$ and $n+b=3$. 

There are only four possibilities for $m$ attending the minimal possible value $m=6+b$ for (necessarily orientable) extremal surfaces with $b$ boundary components, corresponding to the data $(g,n,b)=(0,3-b,b)$ and $b\le 3$. In each case, the shape of the polygon $D_\Gamma(z)$ is completely determined, up to isometry, by \autoref{theo2} and also the side-pairing pattern induced by the corresponding NEC group is fully determined (see \autoref{fig:casesg=0_rho=3}), so there is only one extremal disc configuration corresponding to these triplets $(g,n,b)$. We note in passing that we can get $m=6$ only for the case of the three times punctured sphere, which accounts for the only extremal disc configuration based on a polygon with 6 edges. We will come back to this extremal surface in \autoref{ex:3-punctured_sphere}.

\begin{figure}[!htbp]
\begin{subfigure}{.24\textwidth}
  \centering
\includegraphics[width=0.99\linewidth]{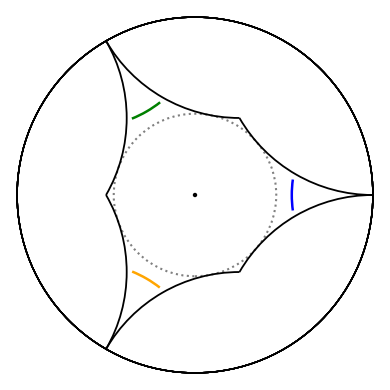}
\end{subfigure}
\begin{subfigure}{.24\textwidth}
\includegraphics[width=0.99\linewidth]{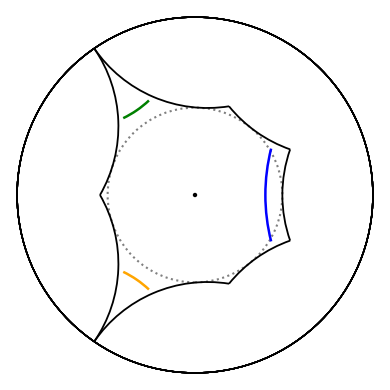}
\end{subfigure}
\begin{subfigure}{.24\textwidth}
\includegraphics[width=0.99\linewidth]{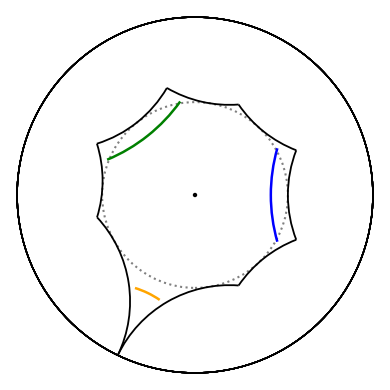}
\end{subfigure}
\begin{subfigure}{.24\textwidth}
\includegraphics[width=0.99\linewidth]{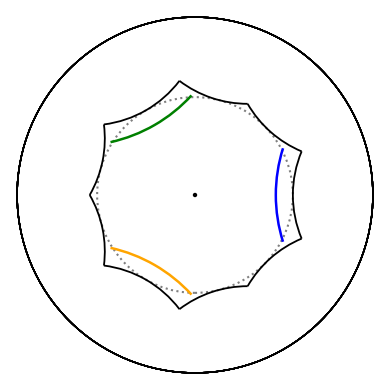}
\end{subfigure}%
\caption{The four extremal surfaces of genus 0 with $n+b=3$.}
\label{fig:casesg=0_rho=3}
\end{figure}    

From another point of view, these four extremal disc configurations are also characterized as those configurations based on polygons without edges of type IV, since the condition $6-6\chi-4n-4b=0$ cannot hold for a hyperbolic non-orientable surface and yields precisely $g=0$ and $n+b=3$ in the orientable case.

\end{example}

 This example motivates the following definition:

\begin{definition}
The \emph{number of extended boundary components}
of a hyperbolic surface with Euler characteristic $\chi$, $b$ boundary components and $n$ cusps is the sum $\rho=n+b$.
\end{definition}

In fact, considering cusps and proper boundary components as a whole is a natural idea, since one can think of cusps as boundary components of length zero. 
Obviously, the $\rho+1$ different pairs $(n,b)=(\rho-b,b)$ for $0\le b\le \rho$ can happen for a given $\rho$. In    \autoref{fig:casesg=0_rho=3} we see all the extremal disc configurations for the case $g=0$ and $\rho=3$. This is a particular situation in which there is a single possible configuration for each pair  $(n,b)=(\rho-b,b)$, but this does not happen in general (see \autoref{fig:fig_12_noori_n2b1}).

In the next section we will make an extensive use of this idea of considering cusps and boundary components altogether, as from a combinatorial point of view both features play a similar role. However, this is not the case from a metric point of view.    \autoref{fig:casesg=0_rho=3} already suggests that for $\chi$ and $\rho$ given, the actual number of cusps affect the size  of extremal discs, as more cusps seem to produce smaller extremal radii. This is indeed the case in general:  

\begin{lemma} \label{le:morecuspssmaller}
    For fixed integer numbers $\chi<0$, $\rho\geq 0$ and $n, n'\in \{0,\ldots , \rho\}$ with $ n< n'$, the inequality $r_{\chi,n,\rho-n}> r_{\chi,n',\rho-n'}$ holds. In particular, the case of $\rho$ cusps and empty boundary gives the smallest radius of extremal discs inside 
  surfaces with Euler characteristic $\chi$ and $\rho$ extended boundary components.
\end{lemma}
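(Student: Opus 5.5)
With $b=\rho-n$, equation \eqref{deblois} reads
$$F_n(r):=(6-6\chi-3\rho)\alpha(r)+2n\beta(r)+2(\rho-n)\gamma(r)=2\pi,$$
and $r_{\chi,n,\rho-n}$ is its unique positive root. The plan is to compare $F_n$ and $F_{n'}$ as functions of $r$ and then use monotonicity in $r$ to compare the roots.

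First I will check that each $F_n$ is strictly decreasing in $r>0$. By the explicit formulas in \autoref{angles}, each of $\alpha(r)$, $\beta(r)$ and $\gamma(r)$ is an arcsine of a positive decreasing function of $r$, so each is strictly decreasing. The coefficients $2n$ and $2(\rho-n)$ are nonnegative, and $6-6\chi-3\rho\ge 0$. This last inequality holds because $6-6\chi-3n-3b=3q$, where $q=2-2\chi-n-b$ is the number of equilateral triangles found in the proof of \autoref{theo2}, hence nonnegative. Since $\chi<0$ forces $6-6\chi>0$, not all coefficients vanish, so $F_n$ is strictly decreasing. It also tends to $0$ as $r\to\infty$, which is consistent with uniqueness of the root.

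Next I compare the two functions. For fixed $r$,
$$F_{n'}(r)-F_n(r)=2(n'-n)\big(\beta(r)-\gamma(r)\big)<0,$$
since \autoref{angles} gives $\beta(r)<\gamma(r)$ for all $r>0$. Set $r_0=r_{\chi,n,\rho-n}$. Then $F_{n'}(r_0)<F_n(r_0)=2\pi$. Because $F_{n'}$ is strictly decreasing and equals $2\pi$ exactly at $r_{\chi,n',\rho-n'}$, this forces $r_{\chi,n',\rho-n'}<r_0$. The statement about $\rho$ cusps follows by taking $n'=\rho$.

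The main point needing care is the inequality $\beta<\gamma$, which the remark asserts without proof. If it has to be verified, it follows directly from the formulas: $\sin\beta=1/\cosh r$ and $\sin(\gamma/2)=1/(\sqrt2\cosh r)$. Since $\gamma/2<\pi/2$, we have $\sin\gamma>\sin\beta$ whenever $\cos(\gamma/2)>1/\sqrt2$, that is, whenever $\gamma<\pi/2$. When $\gamma\ge\pi/2$, the inequality $\beta\le\pi/2\le\gamma$ holds trivially, with strictness checked at the boundary case, and in the remaining range both angles lie in $(0,\pi/2)$, where $\sin$ is increasing.
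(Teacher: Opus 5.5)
Your proof is correct and is essentially the paper's argument: evaluate the defining function for $(n',\rho-n')$ at $r_{\chi,n,\rho-n}$, use $\beta<\gamma$ to get a value strictly below $2\pi$, and conclude from the monotonicity of the function in $r$. Your extra checks only make explicit what the paper takes for granted: the positivity of the coefficients, which gives strict monotonicity, and an elementary derivation of $\beta<\gamma$ from the explicit arcsine formulas.
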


\begin{proof}
Recall that $r_{\chi,n,b}$ is the unique positive solution to (\ref{deblois}), i.e. the preimage of $2\pi$ under the decreasing function $f_{\chi,n,b}(r)=\left(6-6\chi-3n-3b\right)\alpha(r)+2n\beta(r)+2b\gamma(r)$.
For $n<n'$, by denoting $b=\rho-n$ and $b'=\rho-n'$, a direct computation gives 
\begin{align*}f_{\chi,n',b'}(r_{\chi,n,b})&=\left(6-6\chi-3\rho\right)\alpha(r_{\chi,n,b})+2n'\beta(r_{\chi,n,b})+2b'\gamma(r_{\chi,n,b})\\
& = f_{\chi,n,b} (r_{\chi,n,b}) +2(n'-n) \beta(r_{\chi,n,b}) +
2(b'-b) \gamma(r_{\chi,n,b}) \\
&< f_{\chi,n,b} (r_{\chi,n,b}) +2(n'-n+b'-b) \gamma(r_{\chi,n,b})  \\
&=f_{\chi,n,b} (r_{\chi,n,b})=2\pi=f_{\chi,n',b'}(r_{\chi,n',b'})
\end{align*}
where we have used that $\beta<\gamma$ (see \autoref{angles}).
\end{proof}

\begin{proposition}\label{minimum_injrad}
The radius of an extremal disc inside any  hyperbolic surface different from the three-times punctured sphere is strictly greater than $r_{-1,3,0}$.
\end{proposition}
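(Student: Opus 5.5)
We compare the defining equations \eqref{deblois} directly. For a triplet $(\chi,n,b)$ write
\[
f_{\chi,n,b}(r)=(6-6\chi-3n-3b)\alpha(r)+2n\beta(r)+2b\gamma(r).
\]
Each of $\alpha,\beta,\gamma$ is a decreasing function of $r$ (see \autoref{angles}), so $f_{\chi,n,b}$ is decreasing and $r_{\chi,n,b}$ is the unique solution of $f_{\chi,n,b}(r)=2\pi$. Hence, to prove $r_{\chi,n,b}>r_{-1,3,0}$ it suffices to show that
\[
f_{\chi,n,b}(r)>f_{-1,3,0}(r)=6\beta(r)\quad\text{for } r=r_{-1,3,0}.
\]
Indeed, this gives $f_{\chi,n,b}(r_{-1,3,0})>2\pi=f_{\chi,n,b}(r_{\chi,n,b})$, and monotonicity then yields $r_{\chi,n,b}>r_{-1,3,0}$. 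We actually aim to prove the inequality $f_{\chi,n,b}(r)>6\beta(r)$ for every $r>0$, using only the pointwise inequalities of \autoref{angles}.

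First we reduce to pure cusps. By \autoref{le:morecuspssmaller}, or directly from $\beta<\gamma$, we have $f_{\chi,n,b}\ge f_{\chi,\rho,0}$ with $\rho=n+b$. Equality holds exactly when $b=0$. So it suffices to treat $f_{\chi,\rho,0}=(6-6\chi-3\rho)\alpha+2\rho\beta$, except that when $(\chi,\rho)=(-1,3)$ we must keep $b\ge1$ and use the strict inequality. Since the surface is a hyperbolic surface with Euler characteristic $\chi$ and $\rho$ extended boundary components, we have $k:=6-6\chi-3\rho\ge 0$. Indeed, $k/3=2-2\chi-\rho$ is the number $q$ of equilateral triangles. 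Two cases need care.

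\begin{itemize}
\item If $\rho\ge 3$, write $f_{\chi,\rho,0}-6\beta=k\alpha+2(\rho-3)\beta$. This is positive unless $k=0$ and $\rho=3$. The conditions $k=0$, $\rho=3$ force $\chi=-1$, so the surface is the three-times punctured sphere, or one of the surfaces with $b\ge1$ handled by the strict inequality above.
\item If $\rho<3$, set $j=3-\rho>0$. Then $k=6-6\chi-9+3j=3(j-1-2\chi)\ge 3j$, because $-1-2\chi\ge 1$. Therefore
\[
f_{\chi,\rho,0}-6\beta\ge 3j\alpha-2j\beta=j(3\alpha-2\beta)>0
\]
by the inequality $2\beta<3\alpha$ of \autoref{angles}.
\end{itemize}

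We also need to check that non-orientable surfaces fit this scheme. They do: only $\chi$ and $\rho$ enter the argument, and $\chi\le -1$ always holds.

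The main subtlety is the boundary case $(\chi,\rho)=(-1,3)$ with $b\ge1$. There $k=0$ and $f_{-1,n,b}-6\beta=2b(\gamma-\beta)>0$, again by \autoref{angles}. This is exactly \autoref{le:morecuspssmaller}, and it shows that the only triplet attaining $r_{-1,3,0}$ is $(\chi,n,b)=(-1,3,0)$.

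Beyond that, the only real work is the inequality $2\beta<3\alpha$, which is quoted from \autoref{angles}. If a check were needed, it would go through the relation $\sin\beta=2\sin(\alpha/2)$.
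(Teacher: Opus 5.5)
Your proof fails as written because of one wrong value. At $(\chi,n,b)=(-1,3,0)$ the coefficient $6-6\chi-3n-3b$ equals $6+6-9=3$, not $0$. As you note yourself, $k=3q$ with $q=2-2\chi-\rho$. Here $q=1$: the three-times punctured sphere is one equilateral triangle plus three horocyclic ideal triangles. So $f_{-1,3,0}=3\alpha+6\beta$, not $6\beta$.

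You seem to have confused $6-6\chi-3\rho$ with $6-6\chi-4\rho$, the number of type IV edges, which is what vanishes exactly at $(\chi,\rho)=(-1,3)$ in \autoref{ex_sph_three}. Accordingly, your claim that $k=0$ and $\rho=3$ force $\chi=-1$ is false: they would give $\chi=-1/2$, and in fact $k\geq 3$ always.

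What you actually prove, $f_{\chi,n,b}(r)>6\beta(r)$, is too weak. At $r_0=r_{-1,3,0}$ it only gives $f_{\chi,n,b}(r_0)>6\beta(r_0)=2\pi-3\alpha(r_0)$, not the needed $f_{\chi,n,b}(r_0)>2\pi$. A sanity check exposes the problem: your inequality holds strictly for $(-1,3,0)$ itself, since $f_{-1,3,0}-6\beta=3\alpha>0$. So it cannot single out the three-times punctured sphere.

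The repair is to compare with $3\alpha+6\beta$, i.e.\ to replace $k$ by $k-3$. Since $\chi\leq 2-\rho$ for every surface, orientable or not, $q=2-2\chi-\rho\geq -\chi\geq 1$, with $q=1$ only for $(\chi,\rho)=(-1,3)$. Hence $k\geq 3$, with equality only there. The three cases then go through:
\begin{itemize}
\item For $\rho\geq 3$: $f_{\chi,\rho,0}-f_{-1,3,0}=(k-3)\alpha+2(\rho-3)\beta\geq 0$, with equality only at $(\chi,\rho)=(-1,3)$.
\item For $\rho<3$: $k-3=3(j-2-2\chi)\geq 3j$, so $f_{\chi,\rho,0}-f_{-1,3,0}\geq j(3\alpha-2\beta)>0$.
\item For $(\chi,\rho)=(-1,3)$ with $b\geq 1$: your identity becomes $f_{-1,n,b}-f_{-1,3,0}=2b(\gamma-\beta)>0$.
\end{itemize}

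Corrected this way, your argument is a valid and somewhat more direct alternative to the paper's. You compare $f_{\chi,n,b}$ with $f_{-1,3,0}$ pointwise in $r$ with a single case split. The paper instead reduces to $b=0$ via \autoref{le:morecuspssmaller}. It then uses monotonicity in $\chi$ for fixed $\rho$ and the bound $\chi\leq 2-\rho$ to reduce to the radii $r_{-1,\rho,0}$ ($\rho\leq 2$) and $r_{2-\rho,\rho,0}$ ($\rho\geq 3$). Both routes rest on the same two inequalities from \autoref{angles}, namely $2\beta<3\alpha$ and $\beta<\gamma$.
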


\begin{proof}
Extremal radii form a set of non-negative numbers, so it has a well-defined infimum. 
By the previous lemma, this infimum is in fact the infimum of the set $\{r_{\chi,\rho,0}\}$,  where $(\chi, \rho)$ run through all admissible data pairs. On the other hand, an argument similar to the one in the proof of \autoref{le:morecuspssmaller}  shows that for $\rho_0$ given we have $r_{\chi',\rho_0,0}>r_{\chi,\rho_0,0}$ if $\chi'<\chi$. 
 Now, the largest possible Euler characteristic for $\rho \ge 3$ is $\chi=2-\rho$ (corresponding to a sphere with $\rho$ punctures), and $\inf \{r_{\chi,n,b}\}=\inf \mathcal{R}_i$ with $\mathcal{R}_i=\{ r_{-1,0,0}, r_{-1,1,0}, r_{-1,2,0}\} \cup \{ r_{2-\rho,\rho,0}: \rho \ge 3 \}$.
 
 For $\rho\in \{0,1,2\}$,
$ f_{-1,\rho,0}(r_{-1,3,0})=3(4-\rho)\alpha(r_{-1,3,0})+2\rho\beta(r_{-1,3,0})>2\pi$ as $3\alpha>2\beta$ by \autoref{angles}, so $r_{-1,3,0}<r_{-1,\rho,0}$. On the other hand, for every $\rho > 3$ we have $r_{2-\rho,\rho,0}>r_{-1,3,0}$ since  
 $f_{2-\rho,\rho,0}(r_{-1,3,0})=3(\rho-2)\alpha(r_{-1,3,0})+2\rho\beta(r_{-1,3,0})>f_{-1,3,0}(r_{-1,3,0})=2\pi$.
\end{proof}

\begin{remark}\label{bound_ideal_triangles}
 We note in passing that, as a consequence, the angles $\alpha(r_{\chi,n,b})$, $\beta(r_{\chi,n,b})$ and $\gamma(r_{\chi,n,b})$ attain their maximal values when $(\chi,n,b)=(-1,3,0)$. In particular, the amount $\beta(r_{\chi,n,b})+\alpha(r_{\chi,n,b})/2$ is bounded above by $\pi/3$, which is the value corresponding to the three-times punctured sphere.
\end{remark}

\section{A first application: counting extremal disc configurations}

\label{sec:extremaldiscconfigurations}

Let $N_{\chi,n,b}$ be the number of extremal disc configurations with Euler characteristic $\chi$, $n$ cusps and $b$ boundary components. This number is related, but not necessarily identical, to the number  $\Sigma_{\chi,n,b}$ of equivalence classes of hyperbolic surfaces for the same datum $(\chi,n,b)$.
The point here is that, in principle, a given surface could  present extremal discs having different extremal disc configurations. In fact, this is something that actually happens in the case of extremal $k$-packings embedded in closed hyperbolic surfaces when $k>1$ (see \cite{girondo_2017}). This is not the case for single extremal discs in closed surfaces (see, for instance, \cite{Girondo_Gonzalez-Diez_2002_2}). Moreover, the work in \cite{beauchamp2017} regarding extremal punctured spheres and our own experience along the composition of this paper support the  conjecture that counting extremal disc configurations or counting extremal surfaces may turn out to be equivalent problems.

Constructing all extremal disc configurations amounts to building up first all the possible polygons like those described in \autoref{theo2} and, for each such polygon, defining all the coherent sets of side-pairing identifications with the demanded properties. This strategy, together with a method that we call the \emph{EBF-correspondence}, yields the following bounds for the number of inequivalent extremal disc configurations:

\begin{theorem}\label{count}
 For any triplet $(\chi,n,b)$ of integers such that $\chi<0$ and $n,b\geq 0$, the inequality $$\# C_{M,\rho}\le N_{\chi,n,b}\le \displaystyle\binom{\rho}{b} \# C_{M,\rho}$$ holds,  
 where $\rho=n+b$, $M=6-6\chi-2\rho$ and $C_{M,\rho}$ denotes the set of conjugacy classes of subgroups of index $2M$ of the extended triangle group $\Delta^{\pm}(2,3,M)$ with $\rho$ non-conjugate elliptic elements, all of them of order 3. In the particular cases $n=0$ or $b=0$ the identity $N_{\chi,n,b}= \#C_{M,\rho}$ is satisfied.
\end{theorem}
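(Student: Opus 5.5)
The plan is to build an explicit correspondence (the EBF-correspondence) between extremal disc configurations and a combinatorial model. First I forget the metric and the distinction between cusps and boundary components, keeping only combinatorial data. In this model both are encoded by the same local pattern, which is then realized inside a fixed extended triangle group.

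\textbf{Combinatorial reduction.} Given a configuration based on a polygon $P$ as in \autoref{theo2}, I collapse each type II edge together with its two neighbouring type III edges, and each pair of type I edges, into a single ``extended boundary'' gadget. That is, I replace it by two consecutive sides identified by an order-3 rotation (or a parabolic) about the common vertex. This produces a combinatorial polygon with $M=6-6\chi-2\rho$ sides: there are $6-6\chi-4\rho$ sides of type IV and $2\rho$ gadget sides, matching the count $m$ from \autoref{theo2} when $b=0$.

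\textbf{Realization in $\Delta^{\pm}(2,3,M)$.} I realize this polygon as the regular $M$-gon with interior angles $2\pi/3$. Its barycentric subdivision into $2M$ copies of the $(2,3,M)$ triangle gives the index-$2M$ statement. Side pairings of the regular polygon, with vertex cycles of length 3, generate a subgroup $G\le\Delta^{\pm}(2,3,M)$ with fundamental domain the regular polygon. The index is $2M$ because the polygon consists of $2M$ reflection triangles.

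In this realization, each gadget vertex becomes a fixed point of an order-3 elliptic element. The $\rho$ gadgets are exactly the $\rho$ conjugacy classes of maximal elliptic elements, and these must be non-conjugate because distinct gadgets give distinct cusps or boundary components. Conversely, from $G\in C_{M,\rho}$ I take the Dirichlet domain at the centre of the regular polygon. I then replace each order-3 cone point by either a cusp gadget (two type I edges) or a boundary gadget (type II edge plus two type III edges), with the metric forced by $r_{\chi,n,b}$. Equivalence of configurations under isometries corresponds to conjugacy in $\Delta^{\pm}(2,3,M)$, since the normalizer of the regular polygon's symmetry is the full triangle group.

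\textbf{Counting.} Each class in $C_{M,\rho}$ has $\rho$ distinguished elliptic classes, and choosing which $b$ of them become boundary gives at most $\binom{\rho}{b}$ configurations, which yields the upper bound. For the lower bound I show that every class in $C_{M,\rho}$ gives at least one configuration, i.e.\ the map from configurations to $C_{M,\rho}$ is surjective. When $n=0$ or $b=0$ there is only one choice of labelling, which gives equality.

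The main obstacle is well-definedness and injectivity modulo equivalence. I must check that replacing a gadget by a cusp or boundary piece keeps all vertex cycles of length 3 with angle $2\pi/3$, so that Poincaré's theorem together with the area identity of \autoref{theo2} makes the result a genuine extremal surface. I must also check that two configurations with the same subgroup class but different labellings may coincide, via symmetries of $G$ permuting the elliptic classes. That is why only an inequality holds in general.
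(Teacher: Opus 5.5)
Your proposal is essentially the paper's own EBF argument. You delete the type II edges so that each cusp pair (type I) or boundary pair (type III) becomes two consecutive sides of the regular $2\pi/3$-angled $M$-gon $Q_M$, paired by an order-3 rotation about a marked vertex. Poincar\'e's theorem then gives an index-$2M$ subgroup of $\Delta^{\pm}(2,3,M)$, and you invert the map by declaring $b$ of the $\rho$ marked vertices pre-boundary, which gives surjectivity, at most $\binom{\rho}{b}$ preimages, and equality when $n=0$ or $b=0$.

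Two small repairs are needed. First, the four cases with $(\chi,\rho)=(-1,3)$ have $M=6$, where $Q_6$ and $\Delta^{\pm}(2,3,6)$ are Euclidean rather than hyperbolic; the paper checks these directly, though your argument also runs verbatim in the Euclidean plane. Second, well-definedness on equivalence classes should come from the fact that an isometry realizing an equivalence induces a symmetry of $Q_M$, and the full symmetry group of $Q_M$ (dihedral of order $2M$) already lies in $\Delta^{\pm}(2,3,M)$; it does not follow from the normalizer claim you wrote, which is false as stated.
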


\begin{proof}
We start by pointing out that, according to \autoref{ex_sph_three},  in the four particular cases when $\chi=-1$ and $\rho=n+b=3$,  we have $N_{\chi,n,b}=1 $ and the claim is obviously true. In particular, $\# C_{6,3}=1$, as the Euclidean extended triangle group $\Delta^{\pm}(2,3,6)$ contains one single class of subgroups of index $12$ with three non-conjugate generators, order $3$ rotations whose fixed points are  alternate vertices of a regular hexagon. 

In what follows, we assume that we are not in any of these special cases. Our proof relies on the construction of a surjective map
from the set of extremal disc configurations of data $(\chi,n,b)$ to a set of hyperbolic orbifolds with Euler characteristic $\chi+\rho/3$ and $\rho$ cone points of order $3$ whose uniformizing group is contained in a certain extended triangle group.

Let $S\simeq\mathbb{D}/\Gamma$ be an extremal surface with Euler characteristic $\chi$, $n$ cusps and $b$ boundary components,  $z\in\mathbb{D}$ be a preimage of an extremal disc center in $S$ and $D_\Gamma(z)$ be
 the Dirichlet domain of $\Gamma$ centered at $z$, which is a hyperbolic $m$-gon for $m=6-6\chi-2n-b$ according to \autoref{theo2}. That is, we start with some extremal disc configuration that has $S$ as the underlying hyperbolic surface. Let $M=m-b$, which, according to \autoref{ex_sph_three}, is greater than 6, since we have ruled out the four cases for which $M$ equals 6. In particular, 
we can assume that there exists a regular $M$-sided hyperbolic polygon $Q_M$ with angle $2\pi/3$.

 Label counterclockwise $q_1,\ldots,q_M$ the edges of $Q_M$, and $d_1,\ldots,d_m$ the edges of $D_\Gamma(z)$, where $d_1$ is an edge of type IV (note that the existence of such an edge is guaranteed as we are assuming that $S$ differ from the surfaces studied in \autoref{ex_sph_three}). For $1\le j \le m $ define  
$$
b_j=\#\{d_l: 1\leq l\leq j \text{ and } d_l \text{ corresponds to a boundary component of } S\}
$$
and let $\varphi: \{1, \ldots, m\} \to \{1, \ldots, M \}$ be the surjective map defined by $\varphi(j)=j-b_j$.

The identification pattern of the edges of $D_\Gamma(z)$ defined by $\Gamma$ allows us to construct a natural subgroup  $\Lambda_{\Gamma,z} < \Delta^{\pm}(2,3,M)$ generated by the elements that map $q_{\varphi(k)}$ to $q_{\varphi(l)}$ whenever there exists an isometry of $\Gamma$ mapping $d_k$ to $d_l$ for $k\neq l$, and keeping the condition of being orientation preserving/reversing under this correspondence from $\Gamma$ to $\Delta^{\pm}(2,3,M)$.

Notice that $\Lambda_{\Gamma,z} $ is not torsion-free, unless $\rho=0$, as it contains as many elliptic elements of order three as the number $\rho$ of extended boundary components of $S$. We call the fixed points of these $\rho$ elliptic elements the \emph{marked vertices} of $Q_M$: there are no two consecutive such vertices in $Q_M$ due to Claim 2 in the proof of \autoref{theo2}.

The sum of the angles at each $\Lambda_{\Gamma,z}$-orbit of non-marked vertices is $2\pi$, as these vertices are identified in triplets as in $D_\Gamma(z)$, while the sum of the  angles at each $\Lambda_{\Gamma,z}$-orbit of marked vertices is $2\pi/3$. By Poincaré's Polygon Theorem, $\Lambda_{\Gamma,z}$ is a NEC group with fundamental domain $Q_M$. Actually, we see that $\Lambda_{\Gamma,z} $ is a subgroup of index $2M$ of the extended triangle group $\Delta^{\pm}(2,3,M)$ that has $\rho$ different conjugacy classes of elliptic isometries of order 3.

This rule $D_{\Gamma}(z) \to \Lambda_{\Gamma, z}$ induces a correspondence

\vspace{0.3em}

\noindent
\begin{minipage}{0.46\textwidth}
\fbox{%
\parbox{0.95\textwidth}{
 Extremal disc configurations of surfaces with Euler characteristic $\chi$, $n$ cusps and $b$ boundary components.
  }%
}
\end{minipage}
\begin{minipage}{0.075\textwidth}
 \;$\xrightarrow{EBF}$
\end{minipage}
\begin{minipage}{0.46\textwidth}
\fbox{%
\parbox{0.95\textwidth}{
 Conjugacy classes of subgroups of index $2M$ of $\Delta^{\pm}(2,3,M)$ with $\rho$ inequivalent elliptic elements, all of them of order 3, where $\rho=b+n$ and $M=6-6\chi-2\rho$.
  }%
}
\end{minipage}
\

\vspace{0.3em}

\noindent that we call EBF as an acronym of \emph{extended boundary filling}. 

For example, one can easily check that the NEC group described by the generators on the left-hand side of \autoref{fig:counting} is mapped by the EBF-correspondence to the group determined by the generators on the right-hand side.

\begin{figure}[!hbtp]
\begin{subfigure}{.4\textwidth}
  \centering
\includegraphics[width=.77\linewidth, draft=False]{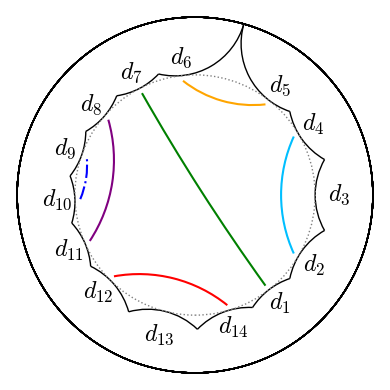}
\end{subfigure}%
\begin{subfigure}{.4\textwidth}
  \centering
\includegraphics[width=.79\linewidth, draft=False]{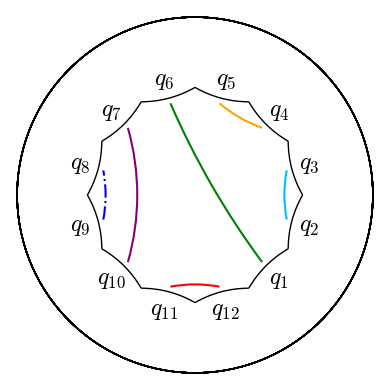}
\end{subfigure}
\caption{On the right side, the EBF-image of the extremal disc configuration on the left. The side-pairings $d_2-d_4$, $d_5-d_6$ and $d_{12}-d_{14}$ correspond to the order three elliptic elements identifying the edges $q_2-q_3$, $q_4-q_5$ and $q_{11}-q_{12}$ respectively. The edges $d_9$ and $d_{10}$ (resp. $q_8$ and $q_9$) are identified by an orientation reversing element, while the rest of identifications preserve the orientation.}
\label{fig:counting}
\end{figure}

We claim that the EBF-correspondence is surjective and the cardinality of the preimage of a given $\Lambda < \Delta^{\pm}(2,3,M)$ is at most $\binom{\rho}{b}$.

First, note that  if $\Lambda$ is a subgroup of index $2M$ of $\Delta^{\pm}(2,3,M)$ with exactly $\rho$ conjugacy classes of elliptic elements, all of order 3, then $\Lambda$ admits $Q_M$ as a fundamental domain. We call a vertex of $Q_M$ \emph{marked} if it is fixed by some elliptic element of $\Lambda$, and \emph{regular} if not. There are $\rho$ marked vertices in $Q_M$ and, as two consecutive vertices cannot be marked, at least also $\rho$ regular vertices.

Now, let $\mathcal{B}$ be any possible choice of a subset of  $b$ of the $\rho$ marked vertices in $Q_M$. We call these marked vertices \emph{pre-boundary}, and  the $\rho-b=n$ remaining ones \emph{pre-cuspidal}, as the role that these vertices will play in what follows will be different. Label counterclockwise  $V_1, \ldots, V_M$ the vertices of $Q_M$ and assume that $V_1$ is a regular vertex. Define
$$
s_J=\#\{V_L: 1\leq L< J \text{ and } V_L \text{ is a pre-boundary vertex of } Q_M\}
$$
\noindent and let $\eta:\{1,\ldots,M\}\longrightarrow\{1,\ldots,m\}$ be the injective map defined by $\eta(J)=J+s_J$.

There exists, up to isometry, a unique hyperbolic $m$-sided polygon $P_m$ satisfying all the conditions described in \autoref{theo2} and such that, if we label $v_1,\ldots,v_m$ the vertices of $P_m$ counterclockwise:

\begin{itemize}
    \item $v_{\eta(J)}$ is an ideal vertex of $P_m$ if $V_J$ is a pre-cuspidal vertex of $Q_M$, 
    \item $v_{\eta(J)}$ and $v_{\eta(J)+1}$ are boundary vertices of $P_m$ if $V_J$ is a pre-boundary vertex of $Q_M$,
    \item all the remaining $v_j$ are interior vertices of $P_m$ (recall the notation in \autoref{theo2}).
\end{itemize}

If we label $d_j$ the edge of $P_m$ with vertices $v_{j-1}$ and $v_{j}$ (modulo $m$) and $q_J$ the edge of $Q_M$ with vertices $V_{J-1}$ and $V_J$ (modulo $M$), the side-pairing pattern in $Q_M$ defines a natural side-pairing pattern in $P_m$: two edges $d_{\eta(J)}$ and $d_{\eta(L)}$ in $P_m$ will be identified if and only if $q_J$ and $q_L$ are identified in $Q_M$ by some element  $\alpha\in\Lambda$, and the corresponding side-pairing transformation of $P_m$ is set to match the orientation preserving/reversing character of $\alpha$.

By this process we have defined the pairwise identification of $M$ of the total $m$ edges of $P_m$, while the remaining $m-M=b$ ones are edges of type II, which must be self-identified by a hyperbolic reflection, according to \autoref{theo2}. If we denote $\Gamma_{\Lambda, \mathcal{B}}$ the group generated by the side-pairing identifications in $P_m$ together with these hyperbolic reflections, $\Gamma_{\Lambda, \mathcal{B}}$ is a NEC group with fundamental domain $P_m$ by the Poincaré's Polygon Theorem and $S_{\Lambda, \mathcal{B}}=\mathbb{D}/\Gamma_{\Lambda, \mathcal{B}}$ is an extremal surface with Euler characteristic $\chi$, $n$ cusps and $b$ boundary components. In fact, we have constructed an extremal disc configuration based on $P_m$ for the triplet $(\chi,n,b)$, whose image under the EBF-correspondence is the starting subgroup $\Lambda< \Delta^{\pm}(2,3,M)$, hence EBF is a surjective map.

On the other hand, it is clear that the distinct EBF-preimages of a given $\Lambda< \Delta^{\pm}(2,3,M)$ are determined by the choice of the set $\mathcal{B}$ of pre-boundary vertices, hence there are at most $\binom{\rho}{b}$ of such preimages. Nevertheless, some of them may determine equivalent extremal disc configurations due to symmetry issues (see Examples \ref{ex:choice1} and \ref{ex:choice2} below).

To finish the proof, simply note that if $n=0$ or $b=0$ there is no freedom in the choice of $\mathcal{B}$, and therefore the EBF-correspondence is one-to-one in these cases.
\end{proof}

\begin{remark}\label{n_extr_disc_conf}
The proof of \autoref{count} shows that the consideration of the EBF-correspondence gives not only bounds for the number of extremal disc configurations, but even a practical procedure to explicitly obtain all such configurations. For a given triplet $(\chi,n,b)$, compute first the list of all the conjugacy classes of subgroups of $\Delta^{\pm}(2,3,M)$ satisfying the desired conditions. Then, for each group $\Lambda$, produce the NEC groups $\Gamma_{\Lambda, \mathcal{B}}$, corresponding to every possible choice $\mathcal{B}$ of $b$ marked vertices of $Q_M$, which we declare as pre-boundary. Finally, check which of these configurations are equivalent in order to avoid repetitions.
\end{remark}

From now on, we will describe the topology of any surface of genus $g$ with $n$ cusps and $b$ boundary components by the tuple  $(\pm,g,n,b)$, where the first symbol is $+$ if the surface is orientable and $-$ if not. We call this $4$-tuple the \emph{topological type} of the surface. Notice that the Euler characteristic $\chi$ of the surface is totally defined by the topological type, as also are the values $m$, $M$ and $\rho$ in the statements of \autoref{theo2} and \autoref{count}.

\begin{example}\label{ex:choice1} 
Combinatorics shows that there are only two conjugacy classes of subgroups of index $16$ of $\Delta^{\pm}(2,3,8)$ that have two non-conjugate elliptic elements of order three, whose side-pairing generators are illustrated at the left-hand side of \autoref{fig:fig_8_noori_n1b1}. A solid line means that the corresponding side-pairing transformation preserves orientation, while a dashed line means that it reverses orientation.

\begin{figure}[!htbp]
\begin{tabular}{c|c}
   \begin{subfigure}{.3\textwidth}
    \centering
    \includegraphics[width=0.6\linewidth]{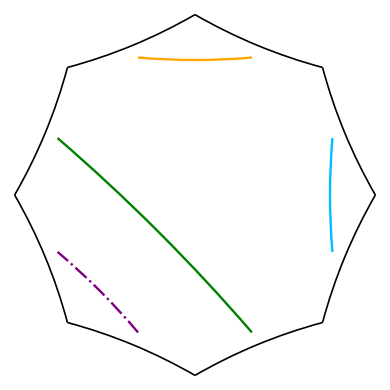}
    \vspace{0.8em}
    \end{subfigure}%
    & 
    \begin{subfigure}{.32\textwidth}
  \centering
  \includegraphics[width=0.7\linewidth]{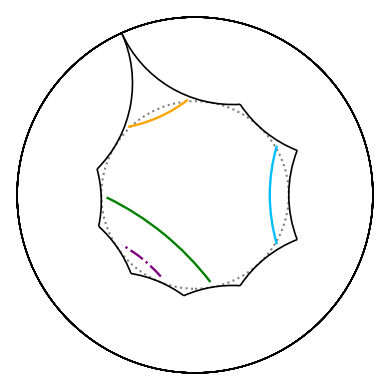}
\end{subfigure}%
\begin{subfigure}{.32\textwidth}
  \centering
  \includegraphics[width=0.7\linewidth]{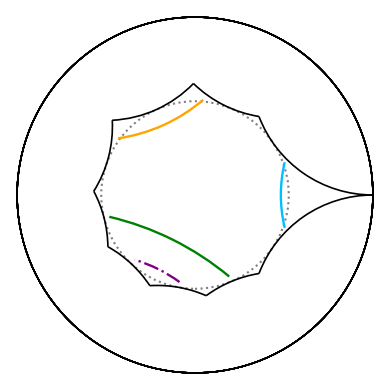}
\end{subfigure} \vspace{-1.1em} \\
\ & \\
 \hline \vspace{-0.8em} \\ 
    \begin{subfigure}{.3\textwidth}
    \centering
    \includegraphics[width=0.6\linewidth]{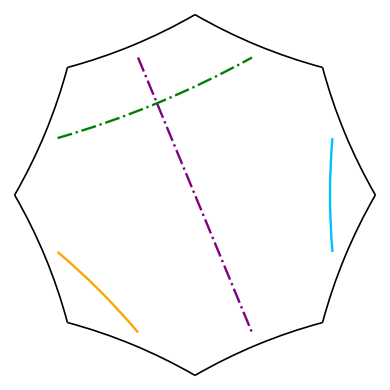}
    \vspace{0.4em}
    \end{subfigure}%
    &
    \begin{subfigure}{.32\textwidth}
  \centering
  \includegraphics[width=0.7\linewidth]{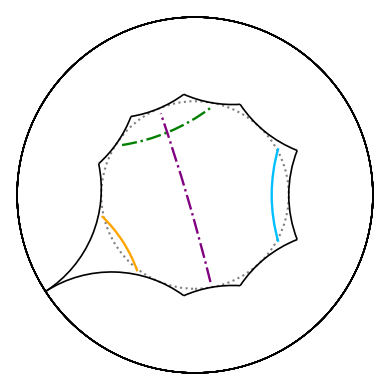}
  \vspace{-0.5em}
\end{subfigure}%
\begin{subfigure}{.32\textwidth}
  \centering
  \includegraphics[width=0.7\linewidth]{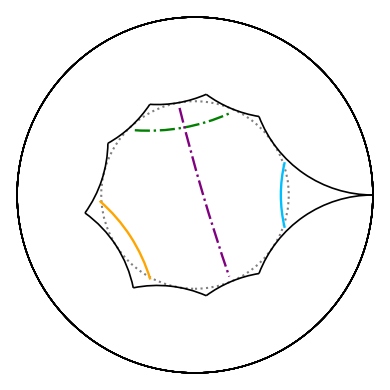}
  \vspace{-0.5em}
\end{subfigure}
\end{tabular}
\caption{In each of the rows: on the right-hand side, the two (equivalent) extremal disc configurations with topological type $(-,1,1,1)$ obtained as the EBF-preimages of the subgroup of $\Delta^{\pm}(2,3,8)$ shown on the left.}
\label{fig:fig_8_noori_n1b1}
\end{figure}

According to \autoref{n_extr_disc_conf}, the extremal disc configurations for $\chi=-1$, $n=1$, $b=1$ can be obtained as the preimages of these subgroups by the EBF-correspondence, after choosing one of the elliptic fixed point vertices to play the role of a pre-boundary marked vertex. In both cases, we can see that the two possible elections of vertex will determine equivalent extremal disc configurations (see the right-hand side of \autoref{fig:fig_8_noori_n1b1}).

Thus, the number of extremal disc configurations $N_{-1,1,1}$, that in this case  corresponds only to surfaces of topological type $(-,1,1,1)$, agrees with $\# C_{8,2}=2$, even  though $n\neq 0$ and $b\neq 0$.
\end{example}

\begin{example}\label{ex:choice2}
There are four conjugacy classes of subgroups of index $24$ of $\Delta^\pm (2,3,12)$ with three elliptic elements of order $3$. The extremal disc configurations for $\chi=-2, n=2, b=1$
can be obtained as the EBF-preimages of these four groups after choosing which of the three elliptic fixed points will be a pre-boundary vertex (see \autoref{fig:fig_12_noori_n2b1}). According to \autoref{count}, we know that $4\le N_{-2,2,1}\le 12$.

\begin{figure}[!htbp]
\begin{tabular}{c|c}
   \begin{subfigure}{.21\textwidth}
    \centering
    \includegraphics[width=0.85\linewidth]{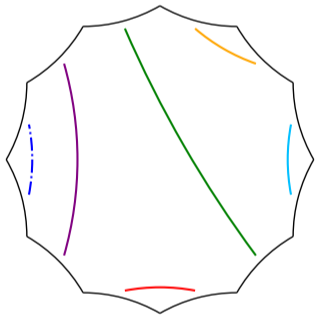}
    \vspace{0.8em}
    \end{subfigure}%
    & 
    \begin{subfigure}{.23\textwidth}
  \centering
  \includegraphics[width=0.97\linewidth]{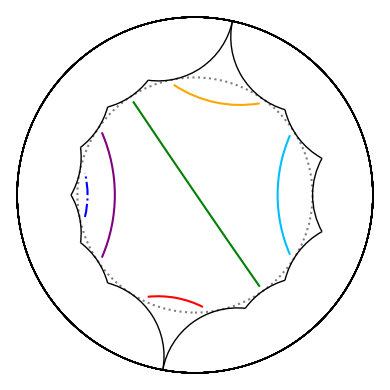}
\end{subfigure}%
\begin{subfigure}{.23\textwidth}
  \centering
  \includegraphics[width=0.97\linewidth]{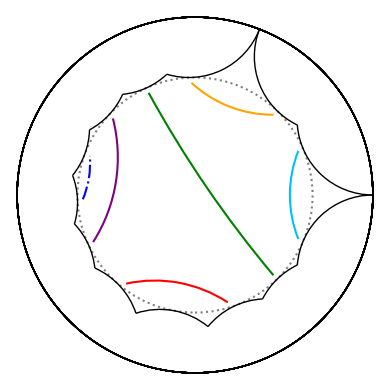}
\end{subfigure}
\begin{subfigure}{.23\textwidth}
  \centering
  \includegraphics[width=0.97\linewidth]{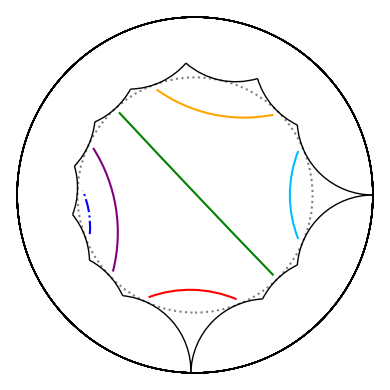}
\end{subfigure} \vspace{-1.1em} \\
\ & \\
 \hline \vspace{-0.8em} \\ 
    \begin{subfigure}{.21\textwidth}
    \centering
    \includegraphics[width=0.85\linewidth]{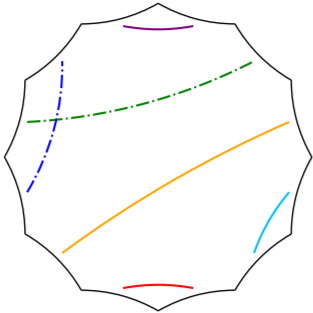}
    \vspace{0.8em}
    \end{subfigure}%
    & 
    \begin{subfigure}{.23\textwidth}
  \centering
  \includegraphics[width=0.97\linewidth]{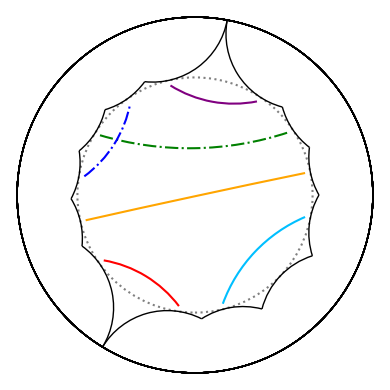}
\end{subfigure}%
\begin{subfigure}{.23\textwidth}
  \centering
  \includegraphics[width=0.97\linewidth]{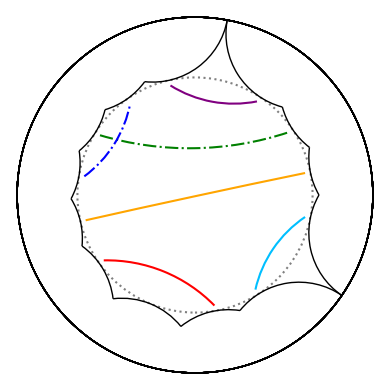}
\end{subfigure}
\begin{subfigure}{.23\textwidth}
  \centering
  \includegraphics[width=0.97\linewidth]{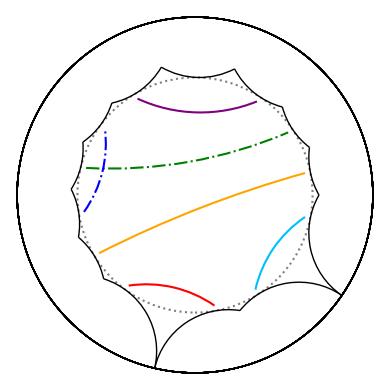}
\end{subfigure} \vspace{-1.1em} \\
\ & \\
 \hline \vspace{-0.8em} \\ 
 \begin{subfigure}{.21\textwidth}
    \centering
    \includegraphics[width=0.85\linewidth]{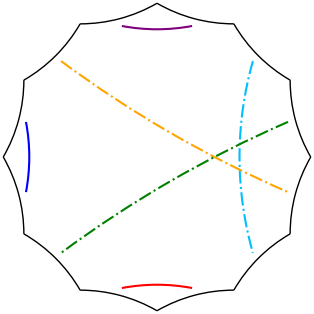}
    \vspace{0.8em}
    \end{subfigure}%
    & 
    \begin{subfigure}{.23\textwidth}
  \centering
  \includegraphics[width=0.97\linewidth]{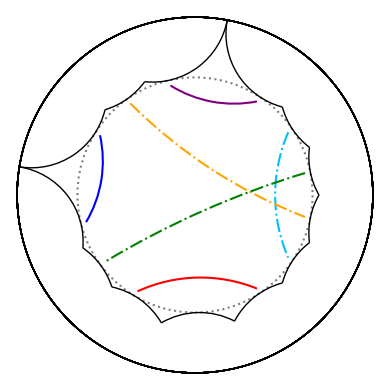}
\end{subfigure}%
\begin{subfigure}{.23\textwidth}
  \centering
  \includegraphics[width=0.97\linewidth]{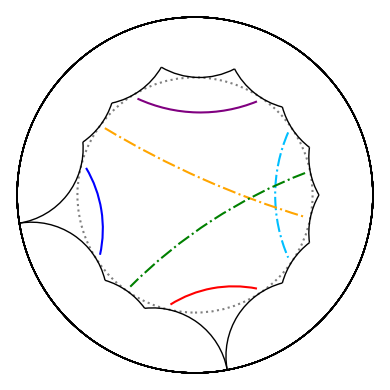}
\end{subfigure}
\begin{subfigure}{.23\textwidth}
  \centering
  \includegraphics[width=0.97\linewidth]{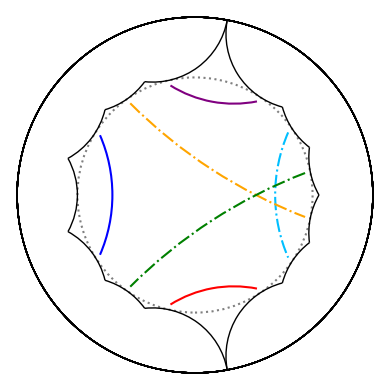}
\end{subfigure} \vspace{-1.1em} \\
\ & \\
 \hline \vspace{-0.8em} \\ 
 \begin{subfigure}{.21\textwidth}
    \centering
    \includegraphics[width=0.85\linewidth]{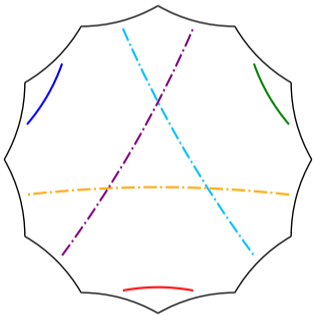}
    \vspace{0.8em}
    \end{subfigure}%
    & 
    \begin{subfigure}{.23\textwidth}
  \centering
  \includegraphics[width=0.97\linewidth]{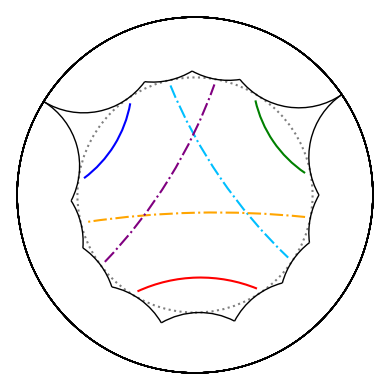}
\end{subfigure}%
\begin{subfigure}{.23\textwidth}
  \centering
  \includegraphics[width=0.97\linewidth]{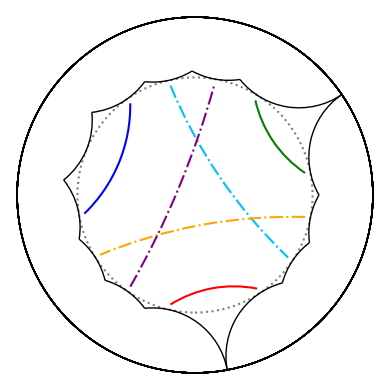}
\end{subfigure}
\begin{subfigure}{.23\textwidth}
  \centering
  \includegraphics[width=0.97\linewidth]{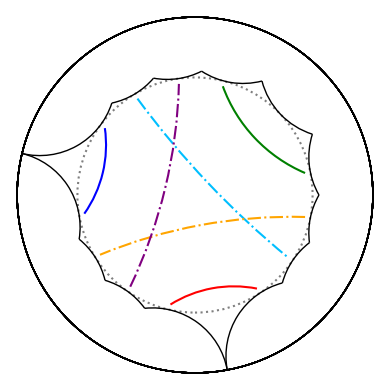}
\end{subfigure}
\end{tabular}
\caption{On the left: subgroups of $\Delta^{\pm}(2,3,12)$ used to construct, via the EBF-correspondence, the extremal disc configurations for $\chi=-2$, $n=2$, $b=1$. On the right, the EBF-preimages: the first two configurations in the third row and all three in the fourth row turn out to be equivalent.}
\label{fig:fig_12_noori_n2b1}
\end{figure}

We see that this election determines three different extremal disc configurations for the first two cases, only two for the third case and just a single one for the last group, hence $N_{-2,2,1}=9$. All these configurations correspond to the topological type $(-,1,2,1)$.
\end{example}

 Examples \ref{ex:choice1} and \ref{ex:choice2} show how the existence of automorphisms of the hyperbolic orbifold described by a given subgroup $\Lambda < \Delta^\pm(2,3,M)$ in the conditions of \autoref{count} affects the size of the preimage of $\Lambda$ under the EBF-correspondence. In fact, one can easily check that the following general statement is true:

\begin{proposition} \label{pr:pre-EBF}
    Let $\Lambda$ be a subgroup of index $2M$ of $\Delta^{\pm}(2,3,M)$ with $\rho$ elliptic elements of order $3$ and let $b\in\{0,\ldots,\rho\}$. Denote $\mathcal{M}(\Lambda)$ the set of $\rho$ elliptic fixed point vertices in the obvious regular $M$-gon $Q_M$ of angle $2\pi/3$ that serves as fundamental domain for $\Lambda$ and $\mathcal{B}^b_{\mathcal{M}(\Lambda)}$ the collection of subsets of $b$ elements of $\mathcal{M}(\Lambda)$. Then
 $$|\text{EBF}^{-1}([\Lambda])|=\#\{\text{orbits of the  action } N^{\pm}(\Lambda)/\Lambda\times \mathcal{B}^b_{\mathcal{M}(\Lambda)}\longrightarrow \mathcal{B}^b_{\mathcal{M}(\Lambda)}\}$$
 where $N^{\pm}(\Lambda)$ is the normalizer of $\Lambda$ in $\Delta^{\pm}(2,3,M)$.
 \end{proposition}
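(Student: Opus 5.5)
The plan is to make precise the parametrization already implicit in the proof of \autoref{count}. By that proof, every element of $\text{EBF}^{-1}([\Lambda])$ has the form $\Gamma_{\Lambda',\mathcal{B}'}$ for some $\Lambda'$ conjugate to $\Lambda$ and some $\mathcal{B}'\in\mathcal{B}^b_{\mathcal{M}(\Lambda')}$. So we get a surjection
$$\Phi:\mathcal{B}^b_{\mathcal{M}(\Lambda)}\longrightarrow \text{EBF}^{-1}([\Lambda]),\qquad \mathcal{B}\mapsto[\Gamma_{\Lambda,\mathcal{B}}].$$
The key preliminary step is the reduction to a fixed $\Lambda$. If $\Lambda'=g\Lambda g^{-1}$ with $g\in\Delta^{\pm}(2,3,M)$, then $g$ carries the tessellation by $\Lambda$-translates of $Q_M$ to that of $\Lambda'$. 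Composing with an element of $\Lambda'$, we may assume $g$ maps $Q_M$ onto $Q_M$. Then $g$ is a symmetry of $Q_M$ sending $\mathcal{M}(\Lambda)$ to $\mathcal{M}(\Lambda')$ and conjugating side-pairings. It therefore carries $\Gamma_{\Lambda,\mathcal{B}}$ to a configuration equivalent to $\Gamma_{\Lambda',g(\mathcal{B})}$, by the isometry of the $P_m$'s that mirrors the rotation or reflection of $Q_M$. It then remains to show that the fibres of $\Phi$ are exactly the orbits of $N^{\pm}(\Lambda)/\Lambda$.

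\textbf{Step 1 (orbits are contained in fibres).} Let $g\in N^{\pm}(\Lambda)$. Multiply $g$ by an element of $\Lambda$ so that $g(Q_M)=Q_M$; this is possible because $\Lambda$-translates of $Q_M$ tile and $g$ preserves the $\Delta^{\pm}(2,3,M)$-tessellation by copies of $Q_M$. The result is a dihedral symmetry $\sigma$ of $Q_M$ that preserves the side-pairing pattern of $\Lambda$ and hence permutes $\mathcal{M}(\Lambda)$. This gives a well-defined action of $N^{\pm}(\Lambda)/\Lambda$ on $\mathcal{B}^b_{\mathcal{M}(\Lambda)}$. Well-definedness needs one check: an element of $\Lambda$ fixing $Q_M$ setwise is trivial, since $Q_M$ is a fundamental domain. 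Next, $\sigma$ induces a symmetry $\tau$ between $P_m(\mathcal{B})$ and $P_m(\sigma\mathcal{B})$, obtained by inserting type II edges at the images of the pre-boundary vertices. Since both polygons are uniquely determined by the combinatorics (\autoref{theo2}), $\tau$ is an isometry conjugating $\Gamma_{\Lambda,\mathcal{B}}$ to $\Gamma_{\Lambda,\sigma\mathcal{B}}$. Hence $\Phi(\mathcal{B})=\Phi(\sigma\mathcal{B})$.

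\textbf{Step 2 (fibres are contained in orbits).} This is the main obstacle. Suppose an isometry $\tau$ maps $P_m(\mathcal{B})$ to $P_m(\mathcal{B}')$ and conjugates the side-pairing patterns. Then $\tau$ preserves the edge types I--IV, because these have distinct lengths or are non-compact, and type II edges are the only ones paired by reflections. So $\tau$ induces a bijection on cyclically labelled edges that preserves adjacency and sends type II edges to type II edges. Collapsing type II edges, that is, passing through the maps $\varphi$ and $\eta$ of the proof of \autoref{count}, yields a combinatorial symmetry of the cyclically labelled $M$-gon, realized by a dihedral isometry $\sigma$ of $Q_M$. By construction $\sigma$ sends $\mathcal{B}$ to $\mathcal{B}'$ and conjugates each side-pairing of $\Lambda$ to a side-pairing of $\Lambda$ with the same orientation character. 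Here one must check that the EBF dictionary really transports the orientation character and the pairing, which holds because the generators correspond bijectively. By Poincaré's Polygon Theorem the generators determine $\Lambda$, so $\sigma\Lambda\sigma^{-1}=\Lambda$. Moreover $\sigma\in\Delta^{\pm}(2,3,M)$, because symmetries of $Q_M$ lie in the extended triangle group whose fundamental triangle subdivides $Q_M$. Therefore $\sigma\in N^{\pm}(\Lambda)$ and $\mathcal{B}'=\sigma\mathcal{B}$.

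Combining the two steps, $\Phi$ induces a bijection from the orbit set onto $\text{EBF}^{-1}([\Lambda])$, which is the claim. The point needing most care is the reduction in Step 2 from an abstract isometry $\tau$ of $m$-gons to a symmetry of $Q_M$. This includes the degenerate cases where the sequence of edge types has extra rotational symmetry, and the fact that an orientation-reversing $\tau$ must correspond to a reflection of $Q_M$.
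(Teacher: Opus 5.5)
Your proposal is correct and is essentially the argument the paper has in mind. The paper gives no written proof beyond saying the statement is easily checked, resting on the parametrization of EBF-preimages by the choice of $\mathcal{B}$ in the proof of \autoref{count} and on the symmetry effects in Examples \ref{ex:choice1} and \ref{ex:choice2}. Your two inclusions supply exactly the missing details: orbits of $N^{\pm}(\Lambda)/\Lambda$, realized by symmetries of $Q_M$, lie in fibres, and any equivalence of $m$-gons collapses to a symmetry of $Q_M$ normalizing $\Lambda$. For the type-preservation step, you do not need distinct edge lengths, because type II edges are the only ones paired by reflections and type III edges are their neighbours.
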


Since two extremal disc configurations can only be equivalent if they are mapped to the same element under the EBF-correspondence, the next result follows from \autoref{pr:pre-EBF}:

\begin{corollary}\label{count_edc}
    Let $\chi<0$ and $n,b\geq 0$ be integer numbers such that $M=6-6\chi-2\rho>6$, for $\rho=n+b$. Then
$$   N_{\chi,n,b} =\sum_{[\Lambda] \in C_{M,\rho}} \#\{\text{orbits of the action  } N^\pm (\Lambda)/\Lambda\times \mathcal{B}^b_{\mathcal{M}(\Lambda)}\longrightarrow \mathcal{B}^b_{\mathcal{M}(\Lambda)}\}.$$
\end{corollary}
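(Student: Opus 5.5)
The formula follows by partitioning the set of extremal disc configurations with data $(\chi,n,b)$ according to their image under the EBF-correspondence. I would use \autoref{count} and its proof, together with \autoref{pr:pre-EBF}.

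First, note that the hypothesis $M>6$ excludes the four special cases of \autoref{ex_sph_three}. Hence the construction in the proof of \autoref{count} applies: $Q_M$ is a genuine regular hyperbolic $M$-gon with angles $2\pi/3$, and every configuration has at least one edge of type IV, so the labelling and the map $\varphi$ are available. That proof shows that EBF is a well-defined surjective map from the set of equivalence classes of extremal disc configurations of type $(\chi,n,b)$ onto $C_{M,\rho}$. Well-definedness on classes should be checked. An isometry $\tau$ conjugating one configuration into another induces a relabelling of edges preserving types. It therefore transports the side-pairing pattern of $Q_M$ by a symmetry of $Q_M$, which lies in $\Delta^{\pm}(2,3,M)$, and so it conjugates $\Lambda_{\Gamma,z}$ within $\Delta^{\pm}(2,3,M)$.

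Since the fibres of a map partition its domain,
\[
N_{\chi,n,b}=\sum_{[\Lambda]\in C_{M,\rho}} \bigl|\text{EBF}^{-1}([\Lambda])\bigr|.
\]
For each fibre, substitute the orbit count from \autoref{pr:pre-EBF}. This gives exactly the stated sum.

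The main obstacle is justifying \autoref{pr:pre-EBF}, which the paper only asserts. I would argue it as follows. Elements of the fibre over $[\Lambda]$ are obtained by fixing the representative $\Lambda$ and choosing $\mathcal{B}\in\mathcal{B}^b_{\mathcal{M}(\Lambda)}$. Two choices $\mathcal{B},\mathcal{B}'$ give equivalent configurations exactly when some isometry $\tau$ maps $P_m$ to $P_m$ and conjugates the patterns. Collapsing the type II edges, $\tau$ induces an isometry of $Q_M$ that normalizes $\Lambda$ and sends $\mathcal{B}$ to $\mathcal{B}'$; conversely, such an element can be realised this way. The subtle point is that a normalizing element need not preserve $Q_M$. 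Composing it with an element of $\Lambda$ yields an element carrying $Q_M$ onto a $\Lambda$-translate-equivalent domain, and marked vertices are only well-defined up to $\Lambda$-orbits. This is why the action is by $N^{\pm}(\Lambda)/\Lambda$ on the $\rho$ conjugacy classes of order-3 elliptic elements, which are identified with $\mathcal{M}(\Lambda)$. The fibre is then in bijection with the orbit set, and the corollary follows.
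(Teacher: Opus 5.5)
Your argument is correct and matches the paper's: with $M>6$ excluding the special cases, EBF is well defined on equivalence classes and surjective onto $C_{M,\rho}$, so $N_{\chi,n,b}$ is the sum of the fibre sizes, and each fibre size is given by \autoref{pr:pre-EBF}. Your sketch of \autoref{pr:pre-EBF} goes beyond the paper, which only asserts it, and its vague step can be made precise as follows: since $Q_M$ is a fundamental domain for $\Lambda$, one has $\Delta^{\pm}(2,3,M)=\Lambda\cdot\mathrm{Stab}(Q_M)$, hence $N^{\pm}(\Lambda)=\Lambda\cdot\bigl(N^{\pm}(\Lambda)\cap\mathrm{Stab}(Q_M)\bigr)$, so every normalizing element can be replaced by a symmetry of $Q_M$ that induces the same permutation of $\mathcal{M}(\Lambda)$.
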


This is an effective formula for the computation of $N_{\chi, n,b}$. For instance, for small values of the Euler characteristic, we have the following:

\begin{theorem} \label{th:tables}
The exact number of extremal disc configurations whose underlying surface has Euler characteristic $\chi=-1$ or $-2$ is displayed in Table \ref{table:euler-1,-2},  classified according to the  topological types $(\pm, g, n, b)$ involved.

\emph{
\begin{table}[!htbp]
    \begin{center}
    \begin{tabular}{| c | c | c | c | c | c |}
    \hline
    \multicolumn{2}{ |c| }{Triplet} & \multicolumn{2}{ |c| }{Orientable } & \multicolumn{2}{ |c| }{Non-orientable } \\ \hline
    \multicolumn{1}{ |c| }{ $(\chi,n,b)$ } & \multicolumn{1}{ |c| }{$N_{\chi,n,b}$} &  \multicolumn{1}{ |c| }{Top. type } & \multicolumn{1}{ |c| }{Configurations }
    & \multicolumn{1}{ |c| }{Top. type } & \multicolumn{1}{ |c| }{Configurations }
    \\ \hline
    (-1,0,0) & 11 & - & - & (-,3,0,0) & 11 \\ \hline
     (-1,1,0) & 5  & (+,1,1,0) & 1 & (-,2,1,0) & 4  \\ \hline
     (-1,0,1) & 5  & (+1,0,1) & 1  & (-,2, 0, 1) & 4 \\ \hline
     (-1,1,1) & 2  & - & - & (-,1,1,1) & 2 \\ \hline
     (-1,0,2) & 2  & - & - & (-,1,0,2) & 2 \\ \hline
     (-1,2,0) & 2  & - & -  & (-,1,2, 0) & 2 \\ \hline
     (-1,3,0) & 1  & (+,0,3,0) & 1 & -  & - \\ \hline
     (-1,0,3) & 1  & (+,0,0,3) & 1 & - & - \\ \hline
     (-1,1,2) & 1  & (+,0,1,2) & 1 & - & - \\ \hline
     (-1,2,1) & 1  & (+,0,2,1) & 1 & - & - \\
     \hline 
        (-2,0,0) & 152  & (+,2,0,0) & 8 & (-,4,0,0) & 144 \\ \hline
     (-2,1,0) & 69  & - & - & (-,3,1,0) & 69  \\ \hline
     (-2,0,1) & 69  & - & -  & (-,3,1, 0) & 69  \\ \hline
     (-2,1,1) & 35  & (+,1,1,1) & 5 & (-,2,1,1) & 30 \\ \hline
     (-2,0,2) & 27  & (+,1,0,2) & 5 & (-,2,0,2) & 22 \\ \hline
     (-2,2,0) & 27  & (+,1,2,0) & 5  & (-,2,2, 0) & 22 \\ \hline
     (-2,3,0) & 4  & - & - & (-,1,3,0)  & 4 \\ \hline
     (-2,0,3) & 4  & - & - & (-,1,0,3) & 4 \\ \hline
     (-2,1,2) & 9  & - & - & (-,1,2,1) & 9 \\ \hline
     (-2,2,1) & 9  & - & - & (-,1,1,2) & 9 \\ \hline
     (-2,4,0) & 1  & (+,0,4,0) & 1 & - & - \\
     \hline
     (-2,0,4) & 1  & (+,0,0,4) & 1 & - & - \\
     \hline
     (-2,3,1) & 1  & (+,0,3,1) & 1 & - & - \\
     \hline
     (-2,1,3) & 1 & (+,0,1,3) & 1 & - & - \\
     \hline
     (-2,2,2) & 3  & (+,0,2,2) & 3 & - & - \\
     \hline
    \end{tabular}
    \end{center}
    \caption{Number of extremal disc configurations of underlying hyperbolic surfaces with Euler characteristic $\chi=-1$ and $\chi=-2$, classified by their topological type.}\label{table:euler-1,-2}
    \end{table}
}
\end{theorem}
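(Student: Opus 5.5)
The proof is a finite computation driven by \autoref{count_edc}. For $\chi=-1$ we have $\rho\le 3$ and $M=12-2\rho\in\{12,10,8,6\}$; for $\chi=-2$ we have $\rho\le 4$ and $M=18-2\rho\in\{18,16,14,12,10\}$. The cases $M=6$, that is $\chi=-1$ and $\rho=3$, are already settled by \autoref{ex_sph_three}, each with exactly one configuration.

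For every other pair $(M,\rho)$ I would enumerate $C_{M,\rho}$ combinatorially rather than through abstract group theory. An element of $C_{M,\rho}$ is the same thing as a side-pairing pattern on the regular $M$-gon $Q_M$ with angle $2\pi/3$, up to the dihedral symmetry of $Q_M$, satisfying the following conditions.
\begin{itemize}
\item Edges are paired, each pair by an orientation-preserving or orientation-reversing map.
\item Exactly $\rho$ vertices are marked. Each marked vertex is fixed by an order-three rotation pairing its two incident edges, and no two marked vertices are adjacent.
\item Every remaining vertex cycle has length exactly $3$, so that the angle sum is $2\pi$.
\end{itemize}
By Poincaré's Polygon Theorem, these conditions are exactly what one needs to obtain a subgroup of index $2M$ of $\Delta^{\pm}(2,3,M)$.

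In practice I would run a computer search over all such pairings of the $M-2\rho$ non-elliptic edges, checking the vertex-cycle condition and then reducing modulo the dihedral group of order $2M$. Equivalently, one could use a low-index subgroup algorithm (GAP, Magma) applied to $\Delta^{\pm}(2,3,M)$ at index $2M$, filtering by the number and order of the conjugacy classes of torsion elements. Running both methods would give a useful cross-check.

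For each class $[\Lambda]$, compute $N^{\pm}(\Lambda)/\Lambda$. This is the subgroup of the dihedral symmetries of $Q_M$ that preserve the pairing pattern, including its orientation labels. Then count its orbits on the $b$-subsets of the marked vertices and sum as in \autoref{count_edc}. When $n=0$ or $b=0$ there is only one subset, so the count is simply $\#C_{M,\rho}$. The duality $n\leftrightarrow b$ visible in the table follows from $\binom{\rho}{b}=\binom{\rho}{n}$ together with the fact that the action on complements is equivariant. This halves the work.

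Finally, to split each total into orientable and non-orientable types, I would observe that the underlying surface is orientable exactly when every side-pairing of $Q_M$ is orientation preserving. This property is invariant under the EBF-correspondence. The genus is then fixed by $\chi=2-2g-\rho$ in the orientable case and $\chi=2-g-\rho$ in the non-orientable case.

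The main obstacle is making the enumeration exhaustive and free of duplicates, in particular for $(\chi,n,b)=(-2,0,0)$ with $M=18$, where the count is $152$. Two points need care there: the correct identification of conjugacy in $\Delta^{\pm}(2,3,M)$ with dihedral equivalence of patterns on $Q_M$, and the handling of orientation-reversing pairings. I would validate the code against known counts for closed extremal surfaces of genus 2 (eight orientable configurations) and against the hand computations in Examples \ref{ex:choice1} and \ref{ex:choice2}.
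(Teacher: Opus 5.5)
Your plan is the paper's own argument: the table is a machine computation via \autoref{count_edc} (enumerate the classes in $C_{M,\rho}$, compute $N^{\pm}(\Lambda)/\Lambda$, count its orbits on $b$-subsets of marked vertices), with the cases $M=6$ taken from \autoref{ex_sph_three}. The paper does both steps in GAP. Your main route instead enumerates side-pairing patterns on $Q_M$ modulo its dihedral group and reads off $N^{\pm}(\Lambda)/\Lambda$ as the stabilizer of the pattern. This is a correct and more elementary implementation of the same computation, and your remarks on the $n\leftrightarrow b$ symmetry and on orientability are right.

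One claim needs correcting. A low-index search filtered only by the number and orders of torsion classes is \emph{not} equivalent to your pattern enumeration when $3\mid M$.
\begin{itemize}
\item Take $(M,\rho)=(12,3)$ and the hemicube, i.e. the antipodal quotient of the cube, a trivalent map with three quadrilateral faces on the projective plane. Its flag stabilizer is a subgroup of index $24$ of $\Delta^{\pm}(2,3,12)$ with no reflections and exactly three cone points of order $3$. This is the same signature as the four groups of \autoref{ex:choice2}.
\item Its elements of order three, however, are conjugate to powers of the order-$12$ rotation. So they fix centres of the $12$-gons of the $\{12,3\}$ tessellation, not vertices.
\item Consequently every copy of $Q_{12}$ is invariant under an order-three element of the group. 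No copy is a fundamental domain, and the group is not in the image of the EBF-correspondence.
\end{itemize}
Your filter would therefore report five classes instead of four, giving for instance $N_{-2,3,0}=5$.

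The correct filter must also require that $\Lambda$ act freely on the $M$-fold points. Equivalently, its order-three elements must be conjugate into the rotation group at a vertex of $Q_M$. This is exactly what your combinatorial search produces and what the paper's counts actually use; the paper's wording of $C_{M,\rho}$ is loose in the same way.
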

 
As we have mentioned in \autoref{n_extr_disc_conf}, 
the first step towards a full description of all the possible extremal disc configurations for a given topological data, using our EBF-method, is the complete determination of all the relevant subgroups of the corresponding triangle group. This can be done by direct inspection in particular cases like \autoref{ex:choice1}, but it generally requires the use of software like \cite{GAP4}. The second step is the application of \autoref{count_edc}, using again GAP for the computation of the normalizers of these groups.

Many of the extremal disc configurations that are referred to in \autoref{th:tables} are illustrated in sections \ref{section:location} and \ref{sec:counting}. The interested reader can check the GAP programs that we have used to obtain this information in \cite{girondo-munozGitHub}, as well as the outputs produced by these programs.

\begin{remark} \label{rem:orient_withoutboundary}
In the particular case of orientable hyperbolic surfaces without boundary, it would be natural 
to consider two fundamental domains in the conditions of \autoref{theo2} to define the same extremal disc configuration only if the isometry $\tau$ in Definition \ref{def:extrdiscconf} preserves the orientation, as otherwise the underlying Riemann surfaces could not be isomorphic. Let us call the conjugacy classes of this new equivalence relation \emph{orientable extremal disc configurations}. All  side-pairing identifications of the fundamental domains in this particular situation are orientation-preserving, so we can restrict ourselves to the Fuchsian setting, and by the same argument as in the proof of \autoref{count} we have the following correspondence:

\medskip

\noindent
\begin{minipage}{0.46\textwidth}
\fbox{%
\parbox{0.95\textwidth}{
 Orientable extremal disc configurations of orientable surfaces of genus $g$, $n$ cusps and without boundary.
  }%
}
\end{minipage}
\begin{minipage}{0.075\textwidth}
 $\xrightarrow{EBF^*}$
\end{minipage}
\begin{minipage}{0.46\textwidth}
\fbox{%
\parbox{0.95\textwidth}{
 Conjugacy classes of subgroups of index $M$ of $\Delta(2,3,M)$ with $n$ non-conjugate elliptic elements, all of them of order 3, where $M=12+4n-6$.
  }%
  }
\end{minipage}

\medskip

This $\text{EBF}^*$-correspondence leads to six different orientable extremal disc configurations when $g=1$, $n=2$ while, according to \autoref{table:euler-1,-2}, there are five extremal disc configurations for the topological type $(+,1,2,0)$. The point is that two of the six different orientable extremal disc configurations, those depicted in \autoref{fig_remarkori}, are identified by an orientation-reversing isometry, so they define the same extremal disc configuration. However, in principle, it is uncertain whether or not the corresponding underlying Riemann surfaces are isomorphic. We come back to this question in \autoref{ex:twicepunct_torus}. 
 
\begin{figure}[!htbp]
\begin{subfigure}{.33\textwidth}
  \centering
  \includegraphics[width=.76\linewidth]{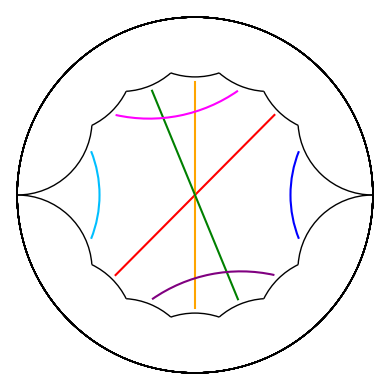}
  \label{fig5:sfig1}
\end{subfigure}%
\begin{subfigure}{.33\textwidth}
  \centering
  \includegraphics[width=.76\linewidth]{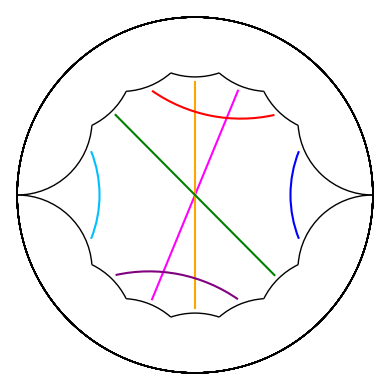}
  \label{fig5:sfig2}
\end{subfigure}
\caption{Two orientable extremal disc configurations of topological type $(+,1,2,0)$.}
\label{fig_remarkori}
\end{figure}

\end{remark}

\section{A second application: locating hidden  extremal discs centers}
\label{section:location}

We have already mentioned that the study of extremal surfaces was developed first for closed surfaces. In this context, the characterization of the groups that uniformize extremal surfaces was essential, for instance, for showing that a compact orientable surface of genus greater than 3 can have at most one extremal disc (\cite{Girondo_Gonzalez-Diez_1999}), and that the  same statement holds true for non-orientable surfaces of genus greater than 6 (see \cite{Girondo_Nakamura_2007}).
The explicit knowledge of the uniformizing groups has been also the main tool for the complete study of the location of every extremal disc within a given hyperbolic surface which, as a byproduct, allows the full 
 determination of the automorphism group of extremal surfaces.

The way this research has been carried out in the past, for a closed extremal surface $S$, is based in the following fact: if $S$ is uniformized as $S\simeq \mathbb{D}/\Gamma$ and $[z]_{\Gamma}$ is the center of an extremal disc, the $\Gamma$-images of the Dirichlet domain of $\Gamma$ centered at $z$ produce a tessellation of $\mathbb{D}$ by isometric regular polygons of angle $2\pi /3$. The set $\mathcal{D}$ of distances between different centers of polygons of such a tessellation, which depends solely on the topology of $S$, plays a fundamental role in the location of ``hidden'' extremal disc centers in $S$, as a necessary condition for $[w]_{\Gamma}\in S$ to be such a center is that $d_\mathbb{D}\big(w, \gamma(w)\big)\in \mathcal{D}$ for every $\gamma \in \Gamma$.

 The presence of cusps and/or boundary components impose additional metric restictions on the location of extremal discs. Particularly, the injectivity radius of a hyperbolic surface $S$ at the point $q\in S$ is given by

 \vspace{-0.5em}
 
 $$
\text{injrad}_q(S)=\min\left\{\frac{1}{2} \text{sys}_q(S), \text{dist}(q,\partial S)\right\}
$$
\noindent
where we set $\text{dist}(q,\partial S)=\infty$ if $S$ has no boundary and $\text{sys}_q(S)$ stands for the \emph{systole} of $S$ at $q$ (i.e. the length of the shortest closed geodesic in $S$ based at the point $q$). Note that when $q$ approaches a cusp of $S$ then $\text{sys}_q(S)\rightarrow 0$,  preventing the existence of extremal discs ``close'' to cusps. Notice also that there cannot be an extremal disc near a boundary component as the center $q\in S$ of an extremal disc must satisfy $\text{dist}(q,\partial S)\geq r_{\chi,n,b}$.
 
On the other hand, finding the location of extremal discs within extremal surfaces with boundary and/or cusps gets more complicated as the tessellation determined by an extremal disc, hence the set $\mathcal{D}$ of distances, can differ from one disc configuration to another.

Fix integer numbers $\chi<0$ and $n,b\ge 0$, and let $\mathcal{C}_j$ be the $j$-th of the corresponding $N_{\chi,n,b}$ extremal disc configurations. Assume that $\mathcal{C}_j$ is determined by the Dirichlet polygon $P_j=D_{\Gamma_j}(0)$ for a NEC group $\Gamma_j$, together with the corresponding side-pairing pattern.

 A fundamental difference with respect to the case of closed extremal surfaces is that two such polygons $P_j$ and $P_k$ may not be isometric to each other. In any case, the configuration $\mathcal{C}_j$ still determines a set of \emph{admissible distances} 
 $$
\mathcal{D}_j=\left\{ d_\mathbb{D}\big(0, \gamma(0)\big) : \gamma \in \Gamma_j \right\}\setminus\{0\}
$$

\noindent but now the sets $\mathcal{D}_{j}$ and $\mathcal{D}_{k}$ may not be the same when $j \neq k$.

The extremal surface $S_j = \mathbb{D}/\Gamma_j$ contains, by definition, an extremal disc that is centered at $[0]_{\Gamma_j}\in S_j$, but it may contain other \emph{hidden} extremal discs centered elsewhere. Observe that a necessary condition for the existence of such a disc is the following:

\begin{lemma}\label{lemma:admissible}
If $[w]_{\Gamma_j}$ is the center of an extremal disc in  $S_j $, there exists $k\in\{1,\ldots, N_{\chi,n,b}\}$ such that 
$d_\mathbb{D}\big(w, \gamma(w)\big)\in \mathcal{D}_k$ for all $\gamma \in \Gamma_j$.
\end{lemma}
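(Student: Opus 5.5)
The plan is to apply \autoref{theo2} directly to the hidden extremal disc centered at $[w]_{\Gamma_j}$, and then transport the resulting configuration to one of the standard representatives $\mathcal{C}_k$ by an isometry.

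\textbf{Step 1: the Dirichlet domain at $w$.} Since $[w]_{\Gamma_j}$ is the center of an extremal disc in $S_j=\mathbb{D}/\Gamma_j$ and $w$ is one of its preimages, \autoref{theo2} applies with $z=w$. So $D_{\Gamma_j}(w)$, together with the side-pairing pattern induced by $\Gamma_j$, satisfies every condition in that theorem. By \autoref{def:extrdiscconf}, it is therefore an extremal disc configuration for the triplet $(\chi,n,b)$.

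\textbf{Step 2: matching with a representative.} This configuration is equivalent to one of the $N_{\chi,n,b}$ representatives, say $\mathcal{C}_k$, based on $P_k=D_{\Gamma_k}(0)$. Equivalence gives an isometry $\tau$ of $\mathbb{D}$ with $\tau(D_{\Gamma_j}(w))=P_k$ that conjugates the side-pairing pattern of the first polygon into that of the second.

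\textbf{Step 3: conjugating the groups.} We aim to show that $\tau\Gamma_j\tau^{-1}=\Gamma_k$ and $\tau(w)=0$.
\begin{itemize}
\item By Poincaré's Polygon Theorem, $\Gamma_j$ is generated by the side-pairing transformations of $D_{\Gamma_j}(w)$ together with the reflections in its type II edges. Likewise, $\Gamma_k$ is generated by the side pairings and reflections of $P_k$.
\item Conjugation by $\tau$ sends the first set of generators to the second, so $\tau\Gamma_j\tau^{-1}=\Gamma_k$.
\item The center is recovered from the polygon: $w$ is the unique point at distance $r_{\chi,n,b}$ from the tangency points on every edge (\autoref{rem:6edges}), equivalently the center of the inscribed disc. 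Since $\tau$ preserves this, $\tau(w)=0$.
\end{itemize}

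\textbf{Step 4: conclusion.} For any $\gamma\in\Gamma_j$, set $\gamma'=\tau\gamma\tau^{-1}\in\Gamma_k$. Then
$$d_\mathbb{D}(w,\gamma(w))=d_\mathbb{D}(\tau(w),\tau\gamma(w))=d_\mathbb{D}(0,\gamma'(0)).$$
When $\gamma\neq 1$, this distance is positive because $\Gamma_j$ acts freely on the interior, so it lies in $\mathcal{D}_k$. (For $\gamma=1$ the statement should be read as excluding the identity.)

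The main obstacle is the subtle point in Step 3. Equivalence of configurations must determine the groups themselves, not merely their combinatorics. Poincaré's theorem is what guarantees that the side pairings together with the reflections generate the whole group, and the identification of $w$ as the distinguished center of the polygon is what lets us conclude $\tau(w)=0$.
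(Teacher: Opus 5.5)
Your proposal is correct and is the reasoning the paper leaves implicit; the lemma is stated there as an immediate observation with no written proof. The argument is the same as yours: \autoref{theo2} applied at $w$ makes $D_{\Gamma_j}(w)$ an extremal disc configuration, hence equivalent to some $\mathcal{C}_k$, and displacements transfer through the conjugating isometry.

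The step to tighten is $\tau(w)=0$. Your phrasing via tangency points is circular, and uniqueness of an inscribed disc is not automatic in $\mathbb{D}$, since a hyperbolic convex region can have more than one maximal inscribed disc. It does follow directly from \autoref{theo2}. The isometry $\tau$ maps the interior vertices of $D_{\Gamma_j}(w)$ (the angle-$2\pi/3$ ones, of which there are $3(2-2\chi-n-b)\geq 3$) to those of $P_k$. These lie at distance $d$ from $w$ and from $0$ respectively, and three distinct points of $\mathbb{D}$ admit at most one equidistant point.
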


It might occur that $k=j$ does not work and, in this case, the extremal disc configuration that corresponds to the  disc centered at $[w]_\Gamma$ is necessarily inequivalent to $\mathcal{C}_j$. 
It might also happen that $k=j$ works but $D_{\Gamma_j}(0)$ and $D_{\Gamma_j}(w)$ differ as extremal disc configurations. On the other hand, if $D_{\Gamma_j}(w)$ and $D_{\Gamma_j}(0)$ are equivalent as extremal disc configurations, there exists a (conformal or anticonformal) automorphism of the underlying surface $S_j$ that maps $[0]_{\Gamma_j}$ to $[w]_{\Gamma_j}$ (and, accordingly, one extremal disc into the other).

 \autoref{lemma:admissible} suggests that the location of the preimages of centers of hidden extemal discs on any $\mathcal{C}_j$ are affected by metric conditions that depend on the whole list of uniformizing groups $\{\Gamma_k:1\leq k\leq N_{\chi,n,b}\}$. However, \autoref{admissible_distances} below  shows that, even though the set of admissible distances $\mathcal{D}_k$ depends on the particular extremal disc configuration $\mathcal{C}_k$, the first two elements in  $\mathcal{D}_k$ are the same for all $k$, simplifying the study in many of the cases.

 Recall that, according to \autoref{theo2}, if $S=\mathbb{D}/\Gamma$ has an extremal disc centered at $[0]_\Gamma$,
 $D_{\chi,n,b}$ is the distance from the origin to any boundary vertex of $D_\Gamma(0)$, $d_{\chi,n,b}$ is the distance from the origin to a interior vertex of $D_\Gamma(0)$ and $l_{\chi,n,b}$ is the length of the edges of type IV (see \autoref{fig_distances}).

 \begin{figure}[!hbtp]
\begin{subfigure}{.33\textwidth}
  \centering
  \includegraphics[width=.78\linewidth, draft=False]{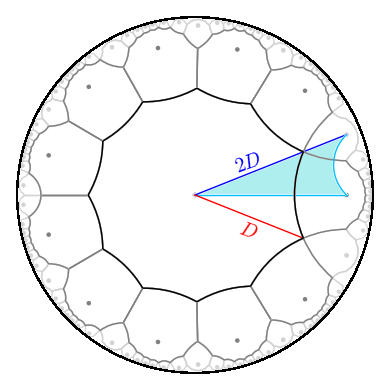}
  \label{fig_dist:sfig1}
\end{subfigure}%
\begin{subfigure}{.33\textwidth}
  \centering
  \includegraphics[width=.78\linewidth, draft=False]{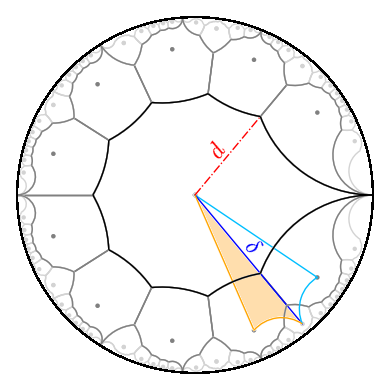}
  \label{fig_dist:sfig2}
\end{subfigure}
\caption{The second admissible distance for the cases $b> 0$ and $b=0$ respectively.}
\label{fig_distances}
\end{figure}

\begin{lemma}\label{admissible_distances}
    Given integers $\chi<0$ and $n,b\geq 0$, the first admissible distance associated to any extremal disc configuration of a hyperbolic surface with Euler characteristic $\chi$, $n$ cusps and $b$ boundary components is $2 r_{\chi,n,b}$. 
    
    If $b>0$, the second admissible distance is $2D_{\chi,n,b}$ while if $b=0$ and $(\chi,n,b)\neq (-1,3,0)$, the second admissible distance is $\delta_{\chi,n,b}=2d_{\chi,n,b}+l_{\chi,n,b}$.
\end{lemma}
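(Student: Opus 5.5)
The plan is to use the Dirichlet tessellation of Theorem \ref{theo2}: the $\Gamma$-images of $D_\Gamma(0)$ tile $\mathbb{D}$, and for $\gamma\neq 1$ the quantity $d_\mathbb{D}(0,\gamma(0))$ is twice the distance from $0$ to the bisector of $0$ and $\gamma(0)$. Set $P=D_\Gamma(0)$. The point $\gamma(0)$ lies in the tile $\gamma(P)$, and the segment from $0$ to $\gamma(0)$ must leave $P$ through some boundary point $x$. Hence $d(0,\gamma(0))\ge d(0,x)+d(x,\gamma(0))$, up to accounting for the intermediate tiles it crosses. I will first bound every admissible distance below by $2r_{\chi,n,b}$. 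By Remark \ref{rem:6edges}, $r_{\chi,n,b}$ is the minimal distance from $0$ to $\partial P$. Since $\gamma^{-1}(x)\in\partial P$, also $d(x,\gamma(0))\ge r_{\chi,n,b}$, so $d(0,\gamma(0))\ge 2r_{\chi,n,b}$. This value is attained by the side-pairing whose edge contains the foot point at distance $r_{\chi,n,b}$, for instance a type IV edge, or a type II reflection. It is also attained for every configuration, since Claim 1 and the $z$-vertices of $P'$ give points of the orbit at distance $2r_{\chi,n,b}$.

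For the second distance I will classify the orbit points near $0$ by the tiles adjacent to $P$. Equality $d=2r$ forces the geodesic to pass through one of the $m$ tangency points, one per edge, and the reflected point is then exactly the neighbour across that edge. Every other $\gamma$ either has a neighbour tile meeting $P$ only at a vertex, or has a non-neighbour tile. For the candidates, I compute explicitly:

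\begin{itemize}
\item When $b>0$, take a boundary vertex $v$ with right angle. The tiles around $v$ are $P$, its reflection across the type II edge, the tile across the adjacent type III edge, and the reflection of that tile. Since the total angle is $4\cdot\pi/2$, there are exactly four tiles, and the opposite tile's center is at distance $2D$ from $0$, with the geodesic passing through $v$.
\item When $b=0$, take an interior vertex with three tiles at angle $2\pi/3$. The centres there are all edge-neighbours, at distance $2r$. The next candidate is the tile across an edge-neighbour's far side. The path goes $0\to v$, along a type IV edge of length $l$, then $v'\to$ centre, giving $2d+l$. I need to verify this is a straight geodesic. The symmetry of the two-tile configuration about the midpoint of the edge gives that, using the fact that the angles $\pi/3$ at both ends make the broken line symmetric and the hexagonal pattern around edges collinear.
\end{itemize}

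The main obstacle is the lower bound showing that nothing smaller exists. For a non-neighbour $\gamma$, the segment exits $P$ at a point $x$ that is not a tangency point. I would argue via the Voronoi property that $d(0,\gamma(0))\ge 2\min\{d(0,y)\}$ over the set of points $y$ equidistant from $0$ and at least two other orbit points. This yields $\ge 2\cdot$(distance to vertices), namely $2D$ or $2d$, with further case comparison giving $2d+l$ when $b=0$. For $b>0$ I must also compare $2D$ against the type I and type IV alternatives. For this I would use Remark \ref{angles} and Remark \ref{bound_ideal_triangles}, checking inequalities like $2D<2d+l$ by monotonicity in $r$ over the range $r>r_{-1,3,0}$ from Proposition \ref{minimum_injrad}. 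The ideal-vertex case produces only parabolic images, whose distances from $0$ grow along the horocycle and exceed $2r$ by more. The exclusion of $(-1,3,0)$ is exactly where these inequalities degenerate, which serves as a consistency check.
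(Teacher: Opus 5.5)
\textbf{Main gap.} The lower bound for the second admissible distance is missing, and it is the real content of the lemma. Your Voronoi inequality is not proved. Even granting it, it only gives $d(0,\gamma(0))\ge 2d$. That is strictly smaller than both targets: $\cosh^2 D-\cosh^2 d=\tfrac23\sinh^2 r>0$, so $d<D$, and trivially $2d<2d+l$. Comparing a few explicitly constructed candidates cannot rule out unknown orbit points at distance in $(2r,2D)$ or $(2r,2d+l)$.

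The missing idea, which is the paper's argument, is to work with the orbit directly. Write the minimizing element as $g=s_1\circ s_0$ with $s_0,s_1$ side-pairings. The triangle with vertices $0$, $s_0(0)$, $g(0)$ has two sides of length $2r$ and an angle $\theta$ at $s_0(0)$, so
\[
\cosh(d^*)=\cosh^2(2r)-\sinh^2(2r)\cos\theta ,
\]
which is increasing in $\theta$. Around $s_0(0)$, the cells of the decomposition in Theorem~\ref{theo1} cut the full angle into consecutive pieces:
\begin{itemize}
\item $\alpha$, from an equilateral triangle;
\item $2\beta$, from a pair of twin horocyclic triangles;
\item $\gamma$, from a Saccheri quadrilateral, measured to the reflected orbit point.
\end{itemize}
So $\theta$ is a sum of such pieces. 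The value $\theta=\alpha$ just gives back $2r$. Since $\alpha<\beta$ and $\gamma<2\alpha$ (from $2\gamma<3\alpha$ in Remark~\ref{angles}), the next possible value is $\theta=\gamma$ when $b>0$, giving $2D$. When $b=0$ it is $\theta=2\alpha$, giving $2d+l$; this needs two equilateral triangles sharing an edge, i.e.\ a type IV edge. Be aware that the paper takes for granted that the second distance is realized by a word of length two in the side-pairings. If you follow this route, that reduction is the step you should justify.

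\textbf{Secondary corrections.}
\begin{itemize}
\item In your $2r$ bound, the claim ``$\gamma^{-1}(x)\in\partial P$'' is false in general. Use the Dirichlet property instead: $d(x,\gamma(0))\ge d(x,0)\ge r$.
\item Your realization of $2D$ through a right-angled boundary vertex is correct.
\item For $2d+l$, the geodesic from $0$ through an interior vertex $v$ must continue along the \emph{third} edge at $v$, the one shared by the two neighbouring tiles. It is the straight continuation of $[0,v]$ because $[0,v]$ bisects the angle $2\pi/3$. An edge of $P$ makes angle $\pi/3$ with $[v,0]$, so your broken line is not a geodesic and in fact ends at an edge-neighbour. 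The target tile is the one at the far end of that third edge, not ``across an edge-neighbour's far side''.
\item That third edge has length $l$ only when its dual edge is shared by two equilateral triangles, and this is exactly where a type IV edge is needed. That is the real reason $(-1,3,0)$ is excluded: there the value $2d+l$ simply does not occur. It is not a degeneration of inequalities.
\item $2D<2d+l$ holds for every $r>0$, since it is equivalent to $\gamma<2\alpha$. No range restriction from Proposition~\ref{minimum_injrad} is needed.
\end{itemize}
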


\begin{proof}
Given an extremal disc configuration $\mathcal{C}$ for the triplet $(\chi,n,b)$, there exists a NEC group $\Gamma$ such that $S=\mathbb{D}/\Gamma$ defines an extremal surface with Euler characteristic $\chi$, $n$ cusps and $b$ boundary components that has an extremal disc centered at $[0]_\Gamma$, and such that the Dirichlet domain $D_\Gamma(0)$ and the side-pairing  pattern defined on $D_\Gamma(0)$ by $\Gamma$  correspond to $\mathcal{C}$. Clearly, the shortest admissible distance must be $d_\mathbb{D}\big(0,\gamma(0)\big)=2r_{\chi,n,b}$, which is attained when $\gamma\in\Gamma$ is any side-pairing identification of $D_\Gamma(0)$.

If $d^*>2r_{\chi,n,b}$ is the second admissible distance,  
$d\big(0,\gamma(0)\big)=d^*$ for some $\gamma\in\Gamma$ given as the composition of two side-pairing transformations of $D_\Gamma(0)$, namely $\gamma=s_1\circ s_0$. Consider the hyperbolic triangle with vertices at the points $0$, $s_0(0)$ and $\gamma(0)$ and denote $\theta\in(0,\pi)$ the angle at $s_0(0)$. By the hyperbolic law of cosines
\begin{equation*}
    \cosh(d^*)=\cosh^2(2r_{\chi,n,b})-\sinh^2(2r_{\chi,n,b})\cos(\theta).
\end{equation*}

If $b>0$, consider the triangle with vertices $0$, $\eta_1(0)$ and $\eta_2\circ\eta_1(0)$, where $\eta_1\in\Gamma$ is the hyperbolic reflection in some edge of type II and $\eta_2\in\Gamma$ is the hyperbolic isometry that identifies its two adjacent edges of type III (see the left-hand side of \autoref{fig_distances}).
By the hyperbolic law of cosines
\begin{equation*}
\cosh(2D_{\chi,n,b})=\cosh^2(2r_{\chi,n,b})-\sinh^2(2r_{\chi,n,b})\cos(\gamma).
\end{equation*}

Consequently, $d^*<2D_{\chi,n,b}$ if and only if $\theta<\gamma$.
On the other hand, $\theta$ can be expressed as a sum of some subangles, each of them equal either to $\alpha, 2\beta$ or $\gamma$ by the characterization in \autoref{theo2} (see \autoref{fig:angles}). By the inequalities exhibited in \autoref{angles}, the only possibility leading to $d^*<2D_{\chi,n,b}$ would be  $\theta=\alpha$.
However, $\alpha$ is the angle of an equilateral triangle with sidelength $2r_{\chi,n,b}$ and $\theta=\alpha$ implies
\begin{equation*}
    \cosh(2r_{\chi,n,b})=\cosh^2(2r_{\chi,n,b})-\sinh^2(2r_{\chi,n,b})\cos(\alpha)=\cosh(d^*),  
\end{equation*}
hence $d^*=2r_{\chi,n,b}$, which is a contradiction. Therefore, the second admissible distance $d^*$ corresponds to the case $\theta=\gamma$ if $b> 0$ and it is precisely $2D_{\chi,n,b}$.

\begin{figure}[!htbp]
    \centering
    \includegraphics[width=0.29\linewidth, draft=False]{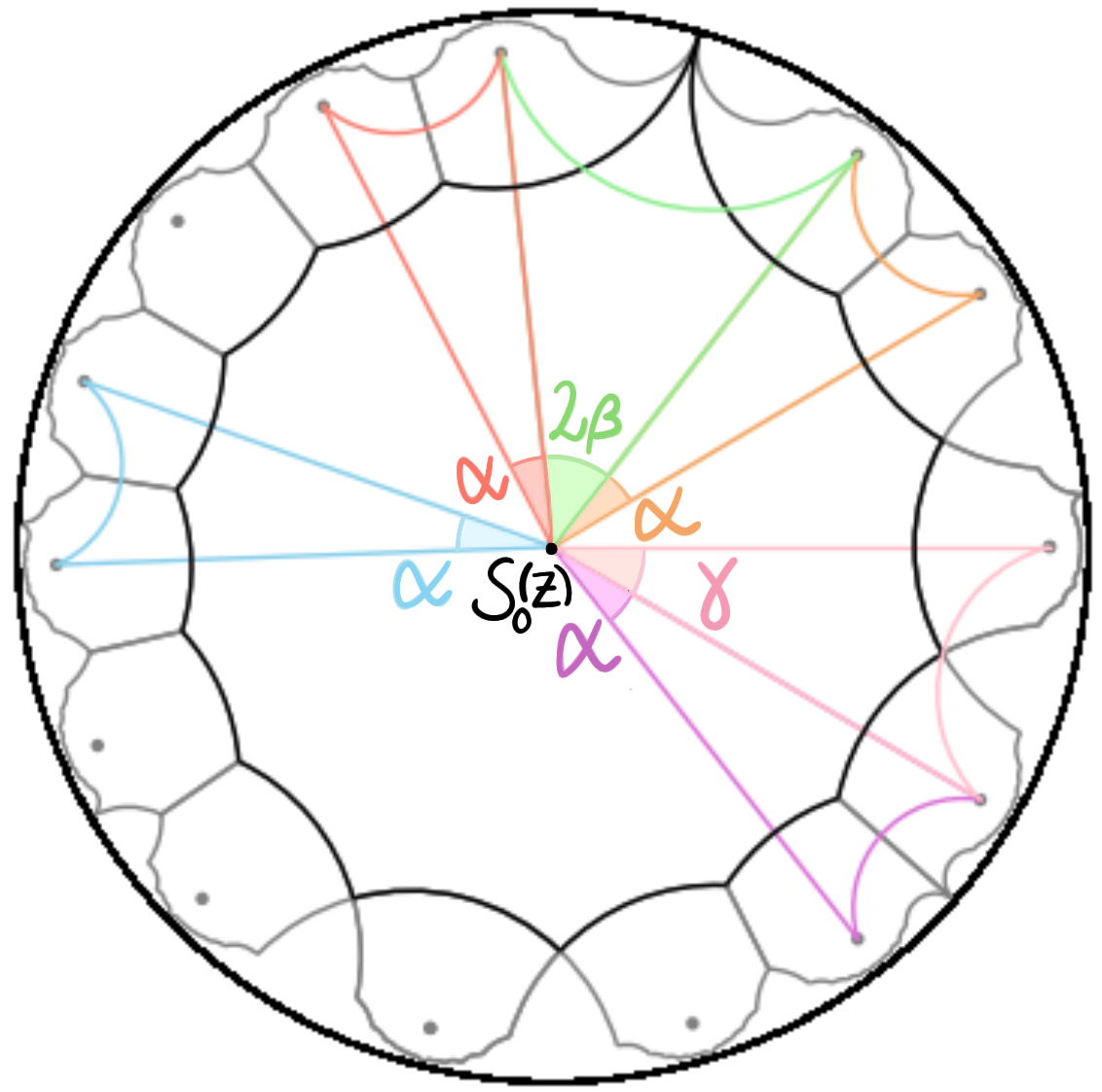}
    \caption{The angle $\theta$ in $s_0(0)$ must be one of the angles illustrated here (or maybe the sum of some of them).}
    \label{fig:angles}
\end{figure}

In the case  $b=0$, if $(\chi,n,b)\neq (-1,3,0)$ we can reproduce this argument, obtaining that the second admissible distance is now  $\delta_{\chi,n,b}=2d_{\chi,n,b}+l_{\chi,n,b}$. The key point here is the existence of an edge of type IV, something guaranteed by \autoref{ex_sph_three}.
\end{proof}

 There is a natural way of rephrasing the statement of \autoref{lemma:admissible} that helps us to develop a better intuition of what points of the fundamental domain described by an extremal disc configuration are \emph{candidates} to be the preimage of the center of a hidden extremal disc of the corresponding extremal surface. The key point of this insight is the usage of ED-cycles (short for \emph{equi-displaced cycles}):

\begin{definition}
    Given an isometry $\gamma$ in $\mathbb{D}$, the \emph{ED-cycle} of $\gamma$ associated to a distance $d>0$ is the set of points in $\mathbb{D}$ that are moved a distance $d$ by $\gamma$, that is, the set
    $$b_\gamma(d)=\{z\in\mathbb{D}:d_\mathbb{D}\big(z,\gamma(z)\big)=d\}.$$
\end{definition}

The set $b_\gamma(d)$ can be re-interpreted as the preimage of $d>0$ by the displacement function $z\longmapsto d_\mathbb{D}\big(z,\gamma(z)\big)$. This function was already studied in \cite{beardon} and, in fact, the following lemma, which is just a rewriting of [\cite{beardon}, Theorem 7.35.1] describes the particular shape that these sets have:

\begin{lemma}\label{move_isometries}
Given an isometry $\gamma$, the value $d_\mathbb{D}\big(z,\gamma (z)\big)$ satisfies the following identities:
\begin{enumerate}
    \item If $\gamma$ is a hyperbolic isometry with axis $A$ and translation length $T$, then 
    \begin{equation*}
        \sinh\big(d_\mathbb{D}\big(z,\gamma( z)\big)/2\big)=\cosh\big(d_\mathbb{D}(z,A)\big)\sinh(T/2).
    \end{equation*}
    \item If $\gamma$ is a glide reflection with axis $A$ and translation length $T$, then
    \begin{equation*}
        \cosh\big(d_\mathbb{D}\big(z,\gamma (z)\big)/2\big)=\cosh\big(d_\mathbb{D}(z,A)\big)\cosh(T/2).
    \end{equation*}
    \item If $\gamma$ is a parabolic isometry with ideal fixed point $u$ then $P(z,u)\sinh\left(d_\mathbb{D}\big(z,\gamma (z)\big)/2\right)$ is constant, where $P(z,u)$ is  the Poisson kernel.
\end{enumerate}
\end{lemma}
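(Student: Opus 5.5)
Since every orientation-preserving or reversing isometry of $\mathbb{D}$ is conjugate to a standard model, and the displacement $d_\mathbb{D}(z,\gamma(z))$ as well as $d_\mathbb{D}(z,A)$ and the Poisson kernel transform compatibly under conjugation, I will normalize each case. For this I move to the upper half-plane $\mathbb{H}$, where the formula $\cosh d_\mathbb{H}(z,w)=1+|z-w|^2/(2\,\mathrm{Im}z\,\mathrm{Im}w)$ is available, or equivalently $\sinh(d/2)=|z-w|/(2\sqrt{\mathrm{Im}z\,\mathrm{Im}w})$. In the hyperbolic case I take $A$ to be the imaginary axis and $\gamma(z)=e^{T}z$. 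Writing $z=\rho e^{i\phi}$, we have $|z-\gamma z|=\rho(e^T-1)$ and $\mathrm{Im}z\,\mathrm{Im}\gamma z=e^T\rho^2\sin^2\phi$. This gives $\sinh(d/2)=(e^{T/2}-e^{-T/2})/(2\sin\phi)=\sinh(T/2)/\sin\phi$. The standard identity $\cosh d_\mathbb{H}(z,i\mathbb{R}^+)=1/\sin\phi$ then yields (1).

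For the glide reflection I take $\gamma(z)=-e^{T}\bar z$ with the same axis. Now $|z-\gamma z|^2=|\rho e^{i\phi}+e^T\rho e^{-i\phi}|^2=\rho^2(1+e^{2T}+2e^T\cos2\phi)$. The simplest route is through $\cosh(d/2)^2=1+\sinh^2(d/2)$, or directly $\cosh(d/2)=|z-\overline{w}|/(2\sqrt{\mathrm{Im}z\,\mathrm{Im}w})$. With $w=\gamma z$ we get $\overline w=-e^T z$, so $|z-\overline w|=\rho(1+e^T)$. Hence $\cosh(d/2)=(1+e^T)/(2e^{T/2}\sin\phi)=\cosh(T/2)/\sin\phi$, which is (2).

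For the parabolic case I normalize $u=\infty$ and $\gamma(z)=z+t$. Then $\sinh(d/2)=|t|/(2\,\mathrm{Im}z)$. The Poisson kernel with pole at $\infty$ in $\mathbb{H}$ is (up to a constant normalization) $\mathrm{Im}z$, so $P(z,u)\sinh(d/2)$ is constant. The one point needing care is that the Poisson kernel transforms correctly under the Cayley map and under conjugation: $P(gz,gu)=P(z,u)|g'(u)|^{-1}$-type factors are constant in $z$, so constancy is preserved. I expect this bookkeeping to be the only real obstacle; the first two cases are pure computation once the isometry-invariance of both sides is noted. The statement is a restatement of Beardon's Theorem 7.35.1, so it may alternatively simply be cited.
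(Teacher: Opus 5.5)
Your proof is correct. The ingredients all check out:
\begin{itemize}
\item the half-plane identities $\sinh(d/2)=|z-w|/(2\sqrt{\mathrm{Im}z\,\mathrm{Im}w})$ and $\cosh(d/2)=|z-\bar w|/(2\sqrt{\mathrm{Im}z\,\mathrm{Im}w})$, the latter because $|z-\bar w|^2=|z-w|^2+4\,\mathrm{Im}z\,\mathrm{Im}w$;
\item the normal forms $e^Tz$, $-e^T\bar z$ and $z+t$, each reachable by an orientation-preserving conjugation;
\item the identity $\cosh d(z,i\mathbb{R}^+)=1/\sin\phi$;
\item the Poisson-kernel bookkeeping, which is in fact exact: for disc automorphisms $P(gz,gu)=P(z,u)/|g'(u)|$, and the Cayley map $C(z)=i(1+z)/(1-z)$ satisfies $\mathrm{Im}\,C(z)=(1-|z|^2)/|1-z|^2=P(z,1)$.
\end{itemize}

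The paper gives no argument at all. It presents the lemma as a rewriting of Beardon's Theorem 7.35.1 and cites it, which you also note as an option. Your normalization computation makes the lemma self-contained. In particular, it genuinely verifies the glide-reflection identity (2). That case is orientation-reversing, so it needs its own short computation if one relies on a reference phrased for M\"obius transformations. The paper's citation buys only brevity.
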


Fix some integers $\chi<0$ and $n,b\geq 0$ and take an extremal disc configuration $\mathcal{C}$ for the triplet $(\chi,n,b)$. Then $\mathcal{C}$ is given by a Dirichlet domain $D_\Gamma(0)$ and the side-pairing pattern defined by the uniformizing group $\Gamma$ of an extremal surface $S\simeq \mathbb{D}/\Gamma$ with an extremal disc centered at $[0]_\Gamma$.
Let  $\mathcal{D}_{\chi,n,b}$ be the ordered list obtained by putting together all the lists of admissible distances of the extremal disc configurations associated to that triplet. For any given $\gamma \in \Gamma$,  we say that the ED-cycle $b_\gamma(d)$ is \emph{admissible} if  $d\in\mathcal{D}_{\chi,n,b}$.

If there exists another extremal disc in $S$ centered at $[w]_\Gamma$, then $d_\mathbb{D}\big(w,\gamma(w)\big)\in \mathcal{D}_{\chi,n,b}$ for every side-pairing identification $\gamma$ of $D_\Gamma(0)$ by \autoref{lemma:admissible}. This means that $w$ must lie in the intersection of $3-3\chi-n$ admissible ED-cycles, one for each element of the set $\mathcal{S}$ of side-pairing identifications given by $\Gamma$. By the characterization provided in \autoref{theo2}, any  admissible ED-cycle always belong to one of the cases in \autoref{move_isometries}, unless it is associated to a hyperbolic reflection. In particular, there are only two options:

\begin{itemize}
    \item If $\gamma\in\mathcal{S}$ is not a parabolic isometry, $b_\gamma(d)$ defines generally a couple of equidistant hypercycles from the axis $A_\gamma$ of $\gamma$. In particular, if $\gamma$ is a hyperbolic reflection, then $b_\gamma(d)$ are the two hypercycles at distance $d/2$ from $A_\gamma$. If $\gamma$ is a hyperbolic isometry or a glide reflection, \autoref{move_isometries} provides the distance from both hypercycles to $A_\gamma$ when $d>T$ (when $d=T$, the ED-cycle $b_\gamma(d)$ is just one curve, the axis $A_\gamma$, and it is a geodesic).
    \item If $\gamma\in\mathcal{S}$ is a parabolic isometry, $b_\gamma(d)$ is simply a horocycle centered at the ideal fixed point of $\gamma$.
\end{itemize}

Denote by $B_\gamma(d)$ the open region bounded by the ED-cycle $b_\gamma(d)$, which can be either a hypercyclic region or a horoball. Observe that $B_\gamma(d_1)\subset B_\gamma(d_2)$ if $d_1<d_2$, and define $B_\gamma(d_1,d_2)=B_\gamma(d_2)\setminus \overline {B_\gamma(d_1)}$. Then, if $d_1,d_2 \in \mathcal{D}_{\chi,n,b}$ are two consecutive admissible distances, no admissible ED-cycle associated to $\gamma$ meets $B_\gamma(d_1,d_2)$ and, consequently, there cannot be any candidate in this region. The same statement holds for $B_\gamma(2r_{\chi,n,b})$ according to \autoref{admissible_distances}. For this reason, we will refer to the domains $B_\gamma(2r_{\chi,n,b})$ and $B_\gamma(d_1,d_2)$ where $d_1$ and $d_2$ are two consecutive admissible distances as \emph{forbidden regions}.

This setting encodes the idea that the center of extremal discs must lie away from the cusps of the surface: if $w\in D_\Gamma(0)$ is a representative of such a disc center, then $w$ must be outside the open horoball $B_{\gamma_u}(2r_{\chi,n,b})$, where $\gamma_u\in\Gamma$ is any parabolic side-pairing of $D_\Gamma(0)$ and $u$ is its ideal fixed point. Notice that the origin itself is certainly in the horocycle 
\begin{equation}\label{first_banana_parabolic}
    b_{\gamma_u}(2r_{\chi,n,b})=\{z\in\mathbb{D}:|z-u/2|=1/2\},
\end{equation}
\noindent so the candidates in $D_\Gamma(0)$ are at a finite distance $d_0$ from the origin, as we have ruled out a whole region around each cusp $[u]_\Gamma$. In fact, the following lemma shows that $d_0=d_{\chi,n,b}$.

\begin{lemma}\label{lemma:distance_origin}
Let $S\simeq \mathbb{D}/\Gamma$ be a hyperbolic surface with an extremal disc centered at $[0]_\Gamma$. If $w\in D_\Gamma(0)$ is a lift of any other extremal disc center in $S$, then $d_\mathbb{D}(0,w)\leq d_{\chi,n,b}$.
\end{lemma}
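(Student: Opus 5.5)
The plan is to use the decomposition of $D_\Gamma(0)$ from the proof of \autoref{theo2} into pieces, all with a vertex at the origin. There are compact triangles with vertices $0$ and two consecutive interior vertices. There are ideal triangles with vertices $0$, an interior vertex and an ideal vertex. There are quadrilaterals with vertices $0$, an interior vertex, a boundary vertex and the midpoint of a type II edge. Each piece has a distinguished side-pairing $\gamma$ attached to it, namely the one pairing the edge of $D_\Gamma(0)$ contained in that piece. We show that, inside each piece, every point at distance greater than $d_{\chi,n,b}$ from the origin lies in a forbidden region of the corresponding side-pairing, or at distance less than $r_{\chi,n,b}$ from $\partial S$.

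First, the pieces containing an edge of type II. A candidate $w$ must satisfy $\mathrm{dist}(w,\partial S)\ge r_{\chi,n,b}$. Since the type II edge lies on the geodesic $A$ fixed by a reflection in $\Gamma$, and the origin is at distance exactly $r_{\chi,n,b}$ from $A$, $w$ must lie in the closed half-plane bounded by the hypercycle through $0$ at distance $r_{\chi,n,b}$ from $A$. Inside the quadrilateral piece, this hypercycle meets the boundary of the piece at $0$ and at a point of the type III edge. The allowed region is therefore the part of the piece on the $0$-side of that hypercycle. A direct trigonometric check should show that its farthest point from $0$ is the interior vertex, at distance $d$, or lies closer still; here we use $D>d$.

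Second, pieces containing an edge of type III or IV, paired by a hyperbolic element or glide reflection $\gamma$. Here we use the forbidden region $B_\gamma(2r_{\chi,n,b})$. The side-pairing $\gamma$ maps the edge $e$ to $e'$ and sends $0$ to its reflection across $e$, so $b_\gamma(2r)$ passes through $0$. By \autoref{move_isometries}, $b_\gamma(2r)$ is a pair of hypercycles about the axis $A_\gamma$. Because the midpoint of $e$ and the midpoint of $e'$ are joined through $A_\gamma$, each point of $e$ is moved less than $2r$ only near the midpoint. The claim to verify is that the compact triangle piece minus $B_\gamma(2r)$ is contained in the closed disc of radius $d$ about $0$, i.e. that $B_\gamma(2r)$ contains the portion of the piece near the edge $e$ except for the two vertices, which are at distance exactly $d$. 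I would try to prove the cleaner statement that any point $p$ of the piece with $d(0,p)>d$ satisfies $d(p,\gamma p)<2r$. To do so, write $p$ via its foot on $e$, and compare $d(p,\gamma p)$ with $d(0,\gamma 0)=2r$ using the fact that $\gamma$ maps the piece adjacent to $e$ onto the piece adjacent to $e'$ across $e'$. This may require the bound $\beta+\alpha/2\le\pi/3$ from \autoref{bound_ideal_triangles} to control the angles.

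Third, ideal triangle pieces. Here $\gamma_u$ is parabolic, and by \eqref{first_banana_parabolic} the horoball $B_{\gamma_u}(2r)$ is the disc $|z-u/2|<1/2$. This horocycle passes through $0$ and, by symmetry of the twin horocyclic triangles, through the interior vertex adjacent to the ideal vertex. Then I would check, in the upper half-plane with $u=\infty$, that the complement of the horoball inside the piece is a compact triangle-like region bounded by the horocycle arc between $0$ and the interior vertex at distance $d$. Every point of this region is within distance $d$ of $0$, because distance to $0$ is maximized at an endpoint of a horocyclic arc in that region.

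The main obstacle is the second step. The hypercycle comparison for hyperbolic and glide-reflection side-pairings is not symmetric in an obvious way, and the axis of $\gamma$ depends on the configuration. I would first try to reduce it to a computation in the quadrilateral formed by $0$, the reflected point $\gamma^{-1}(0)$ and the two endpoints of $e$.
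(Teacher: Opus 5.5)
\textbf{The genuine gap is in your third step.} The horocycle $b_{\gamma_u}(2r_{\chi,n,b})=\{|z-u/2|=1/2\}$ does \emph{not} in general pass through the interior vertex $p$ of $D_\Gamma(0)$ adjacent to $u$.

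The symmetry you invoke is the $\gamma_u$-invariance of the horocycle. It only puts the $z$-vertices $\gamma_u^{\pm1}(0)$ of the twin horocyclic triangles on it, and these lie outside $D_\Gamma(0)$. The point $p$ is different: it is the barycenter of the equilateral triangle glued to the compact side.

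Write $p=\tanh(d_{\chi,n,b}/2)e^{i(\theta\pm\xi)}$ with $\xi=\beta+\alpha/2$. Then $p$ lies on that horocycle if and only if $\tanh(d_{\chi,n,b}/2)=\cos\xi$, which happens only for the thrice-punctured sphere. For $(\chi,n,b)=(-1,1,0)$, for instance, $\tanh(d/2)\approx 0.574<\cos\xi\approx 0.640$.

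Your picture is also internally inconsistent. If $0$, $p$ and $u$ were all in the closed horoball, its convexity would put the whole piece inside it. There would then be no ``triangle-like region'' inside the piece bounded by a horocyclic arc from $0$ to $p$.

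What must actually be proved is that $p$ lies in the closed horoball, and this is the only nontrivial estimate in the lemma:
\[
\tanh(d_{\chi,n,b}/2)=\frac{\cos(\xi/2+\pi/6)}{\cos(\xi/2-\pi/6)}\le\cos\xi .
\]
The equality comes from the law of cosines in the ideal piece, whose angles are $\xi$, $\pi/3$ and $0$. The inequality uses $\xi\le\pi/3$ from \autoref{bound_ideal_triangles}. Once you have it, convexity of the horoball shows that the only possible candidates in an ideal piece are $0$, and $p$ when $\xi=\pi/3$.

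\textbf{Your ``main obstacle'' in the second step is not there.} A compact triangle piece is the geodesic triangle with vertices $0,v_1,v_2$, where $d_\mathbb{D}(0,v_i)=d_{\chi,n,b}$. Since hyperbolic balls are convex, the whole piece lies in the closed ball of radius $d_{\chi,n,b}$ about $0$. In particular, points near $e$ are not farther away; the midpoint of $e$ is at distance $r_{\chi,n,b}$. So no forbidden region of the hyperbolic or glide-reflection side-pairings is needed. The bound $\beta+\alpha/2\le\pi/3$ belongs in the ideal-piece step, not here.

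The same remark closes your first step without the unspecified trigonometric check. Cut the quadrilateral piece along the half-summit $[0,m]$ of the Saccheri quadrilateral.
\begin{itemize}
\item The triangle with vertices $0,v,m$ is half a compact triangle, hence lies within distance $d_{\chi,n,b}$ of $0$.
\item The remaining Lambert quadrilateral has vertices $0$, $m$, the boundary vertex and the midpoint of the type II edge. Their distances to the geodesic $A$ are $r_{\chi,n,b}$, $l^*-l/2<r_{\chi,n,b}$, $0$ and $0$. By convexity of the distance to $A$, every point of it other than $0$ is moved less than $2r_{\chi,n,b}$ by the reflection in $A$.
\end{itemize}
This is exactly the paper's argument. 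Note also that whether the hypercycle at distance $r_{\chi,n,b}$ meets the type III edge depends on the sign of $l^*-r_{\chi,n,b}$, which varies with $(\chi,n,b)$. The inequality $D>d$ plays no role.
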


\begin{proof}
    Consider the decomposition of the  domain $D_\Gamma(0)$ into pieces described in \autoref{fig:decomposition}. If $w$ belongs to a compact triangle of this decomposition, $d_\mathbb{D}(0,w)\leq d_{\chi,n,b}$ by convexity. The second type of piece consists of half a compact triangle like the previous one, for which the same consideration holds, together with half a 
   Saccheri quatrilateral. If $w$ lies in this last kind of sub-region we have 
$$d_\mathbb{D}\big(w,\gamma(w)\big)=2d_\mathbb{D}(w,g)<2r_{\chi,n,b}$$
    where $g$ is the geodesic containing the corresponding edge of type II and $\gamma$ is the hyperbolic reflection in this edge. As $2r_{\chi,n,b}$ is the first admissible distance by \autoref{admissible_distances}, such a point $w$ cannot be the preimage of an extremal disc center.

    Finally, assume that $w$ belongs to an ideal triangle $T$ with ideal vertex $u=e^{i\theta}$. Then, $T$ is the convex hull of the points $\{0,u,p\}\subset \mathbb{D}$, where $p=\tanh (d_{\chi,n,b}/2)e^{i(\theta\pm\xi)}$ is the remaining vertex of $T$ and $\xi=\beta+\alpha/2$. Notice that $p$ is contained in the closed horoball $B=\{z\in\mathbb{D}:|z-u/2|\leq 1/2\}$ bounded by the horocycle given in \eqref{first_banana_parabolic}, as we can show that $\tanh (d_{\chi,n,b}/2)\leq \cos (\xi)$. Indeed, by applying the hyperbolic law of cosines to $T$ and some classical trigonometric identities, we have the following equalities:
    \begin{align*}\tanh (d_{\chi,n,b}/2)&=\sqrt\frac{\cosh(d_{\chi,n,b})-1}{\cosh(d_{\chi,n,b})+1}=\sqrt\frac{2+\cos\xi-\sqrt 3\sin\xi}{2+\cos\xi+\sqrt 3\sin\xi}=\frac{\sqrt{1+\cos(\xi+\pi/3)}}{\sqrt{1+\cos(\xi-\pi/3)}}\\
    &=\frac{\cos(\xi/2+\pi/6)}{\cos(\xi/2-\pi/6)}\leq \cos(\xi).\end{align*}

    The last inequality holds by the condition $\xi\leq \pi/3$ from \autoref{bound_ideal_triangles} (in fact, the equality is attained if and only if $\xi= \pi/3$). Therefore, $d_\mathbb{D}\big(w,\gamma (w)\big)<2r_{\chi,n,b}$ unless $w=0$ or $w=p$ (and this second case only if  $\xi=\pi/3$), and so generically there cannot be any preimage of an extremal disc center of $S$ in an ideal triangle besides the origin.
\end{proof}

\begin{remark}\label{remark:preimages}
    The proof of the previous lemma exhibits that there cannot be extremal disc centers in some of the pieces of the Dirichlet domain given by an extremal disc configuration. In fact, such centers  can only belong to the shaded pieces of \autoref{fig:preimages}. Moreover, the proper vertex
    of an ideal triangle distinct to the origin can be an extremal disc center if and only if $\tanh (d_{\chi,n,b})=\cos(\xi(r_{\chi,n,b}))$, that is, when $(\chi,n,b)=(-1,3,0)$.

\begin{figure}[!htbp]
    \centering
    \includegraphics[width=0.5\linewidth, draft=False]{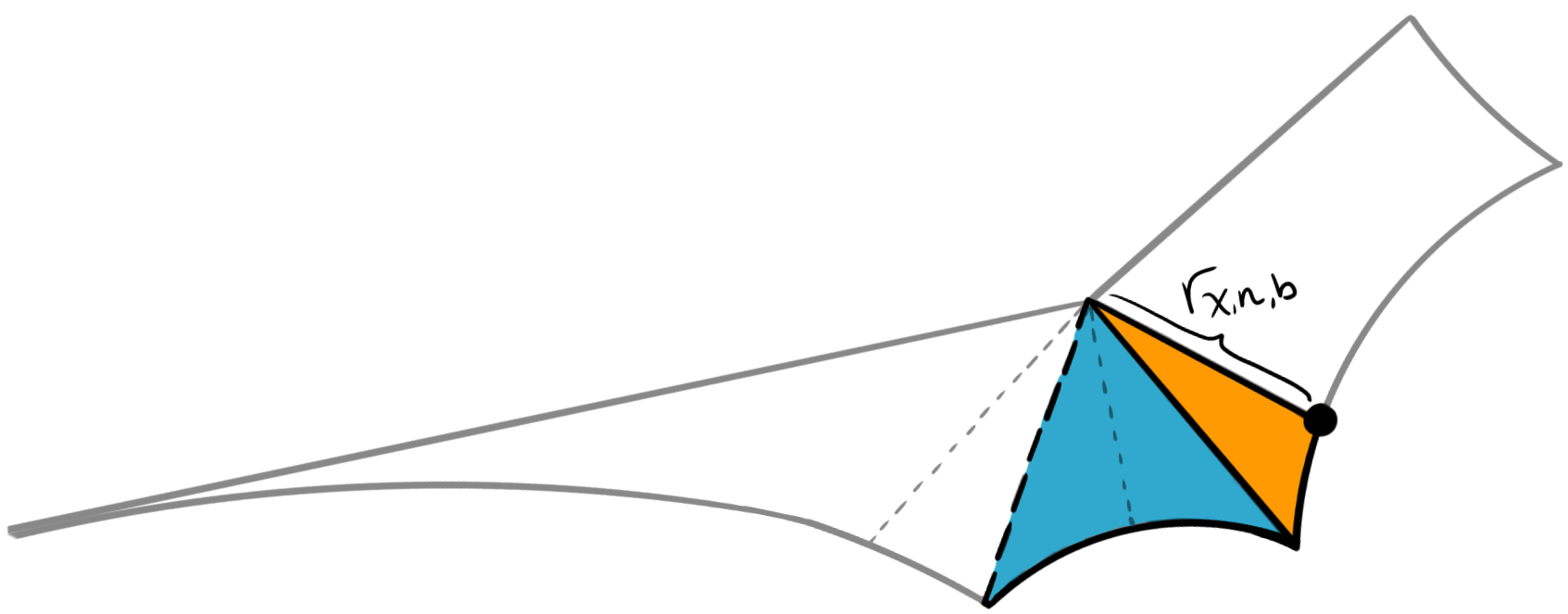}
    \caption{The preimages of the centers of extremal discs are contained in the compact triangular pieces and the filled right-angled subtriangles of the quadrilaterals in the decomposition of \autoref{fig:decomposition}.}
    \label{fig:preimages}
\end{figure}
    
\end{remark}

Another direct consequence of the fact that extremal disc centers stay always away from cusps is the finiteness on the number of extremal discs within a surface:

\begin{proposition} \label{pr:finite}
    The number of extremal discs embedded in a surface is always finite.
\end{proposition}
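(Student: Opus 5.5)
The plan is to reduce finiteness to a compactness argument inside the fixed Dirichlet domain. If $S$ contains no extremal disc there is nothing to prove. Otherwise, uniformize $S\simeq\mathbb{D}/\Gamma$ so that $[0]_\Gamma$ is the center of one extremal disc. Every other extremal disc center then has a lift $w$ in the closed domain $D_\Gamma(0)$. By \autoref{lemma:distance_origin}, every such lift satisfies $d_\mathbb{D}(0,w)\le d_{\chi,n,b}$. Hence all lifts of extremal disc centers lie in the compact set $K=D_\Gamma(0)\cap \overline{B}(0,d_{\chi,n,b})$, which is compact because the closed hyperbolic ball is compact. This is exactly the statement that centers stay away from the cusps.

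Next I will show that extremal disc centers cannot accumulate. The only discrete-group input needed is the following. Since $\Gamma$ acts properly discontinuously and $K$ is compact, the set $F=\{\gamma\in\Gamma:\gamma(K')\cap K'\neq\emptyset\}$ is finite, where $K'$ is a compact neighbourhood of $K$. For a lift $w\in K$ of a center, the function $\gamma\mapsto d_\mathbb{D}(w,\gamma(w))$ attains its minimum $2r_{\chi,n,b}$ only on elements $\gamma$ with $d_\mathbb{D}(w,\gamma(w))\le 2r_{\chi,n,b}$. These elements lie in the finite set $F_0=\{\gamma: d(K,\gamma K)\le 2r_{\chi,n,b}\}$. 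Now suppose there were infinitely many distinct centers, with lifts $w_k\in K$. Passing to a subsequence, $w_k\to w_\infty\in K$.

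Two cases finish the argument. The first possibility is that infinitely many of the $w_k$ are $\Gamma$-equivalent to $w_\infty$; this is impossible because the $w_k$ are distinct points of a fundamental domain, so at most finitely many can be equivalent to a given point. Otherwise, $w_k\ne w_\infty$ with $w_k\to w_\infty$. The extremal disc of radius $r=r_{\chi,n,b}$ centered at $[w_k]$ is embedded, so $d(w_k,\gamma w_k)\ge 2r$ for all $\gamma\ne 1$; the same holds for any reflection or boundary constraint. By continuity, $w_\infty$ also has injectivity radius at least $r$, hence at least the maximum $r$, so $w_\infty$ is itself an extremal center. Apply \autoref{theo2} at $w_\infty$: then $D_\Gamma(w_\infty)$ has exactly $m$ points at distance exactly $r$ (\autoref{rem:6edges}). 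Moreover, the displacement $2r$ is realised by the $m$ side-pairings $\gamma\in F_0$ (or by reflections), whose ED-cycles $b_\gamma(2r)$ pass through $w_\infty$. Near $w_\infty$, a center $w_k$ must lie outside every forbidden region $B_\gamma(2r)$ for $\gamma\in F_0$. It also must not strictly exceed $2r$ for all of them, since otherwise the injectivity radius at $w_k$ would be larger than $r$, contradicting maximality. So $w_k$ lies on $b_\gamma(2r)$ for some $\gamma$ and outside all the other $B_\gamma(2r)$.

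The main obstacle is to show that the region $\bigcap_{\gamma\in F_0}\big(\mathbb{D}\setminus B_\gamma(2r)\big)$ has $w_\infty$ as an isolated point. My approach is to exhibit three side-pairings at $w_\infty$ whose displacement gradients are positively spanning. These come from three $z$-vertices of one equilateral triangle, or equivalently from the edges of the Dirichlet polygon, whose outward normals surround $w_\infty$. Moving $w$ in any direction then strictly decreases the displacement of at least one of them, which enters a forbidden region. Concretely, the gradient of $z\mapsto d(z,\gamma z)$ at $w_\infty$ is a sum of the unit vectors toward $\gamma^{\pm1}(w_\infty)$. Because the Dirichlet polygon is compact apart from its cusps and has at least three edges with normals not contained in a half-plane, these gradients positively span the tangent plane. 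This forces $w_k=w_\infty$ for large $k$, a contradiction, so the set of extremal centers, and hence of extremal discs, is finite.
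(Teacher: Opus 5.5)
You take a different route from the paper, and your first steps are sound. The paper never passes to a limit. It uses \autoref{lemma:distance_origin} to bound the relevant displacements by $2(d_{\chi,n,b}+r_{\chi,n,b})$, so only finitely many admissible ED-cycles of side-pairings of $D_\Gamma(0)$ can occur, and then it intersects that finite family of generalized circles. Your compactness step is fine, and so is your observation that a limit $w_\infty$ of lifts of centers is itself an extremal center, so that \autoref{theo2} applies at $w_\infty$.

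The gap is the isolation step, which you yourself flag as the main obstacle. At $w_\infty$ the gradient of $f_\gamma(w)=d_\mathbb{D}(w,\gamma(w))$ is $-(u_e+u_{e'})$, where $u_e,u_{e'}$ are the unit vectors toward $\gamma(w_\infty)$ and $\gamma^{-1}(w_\infty)$, i.e.\ toward the neighbours across the two paired edges; for a reflection it is $-2u_e$. That the $m$ vectors $u_e$ surround $w_\infty$ tells you nothing about these pairwise sums. A type IV pairing of two almost opposite edges has an almost zero gradient, and exactly zero if $w_\infty$ lies on the axis. Paired sums can also all lie in one closed half-plane: normals at $0^\circ,100^\circ,200^\circ,300^\circ$ paired as $0^\circ\leftrightarrow 200^\circ$ and $100^\circ\leftrightarrow 300^\circ$ give sums pointing at $280^\circ$ and $20^\circ$. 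The ``three $z$-vertices of one equilateral triangle'' do not yield three side-pairings with controlled gradients either. So the claim that every direction strictly decreases some displacement is unproved, and as written nothing excludes a curve of centers through $w_\infty$.

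You can close the gap with a counting fact you do not use. Let $A_k=\{\gamma\neq 1: d_\mathbb{D}(w_k,\gamma(w_k))=2r\}$ and let $A_\infty$ be the same set at $w_\infty$. For large $k$, discreteness together with $|f_\gamma(w_k)-f_\gamma(w_\infty)|\le 2d_\mathbb{D}(w_k,w_\infty)$ gives $A_k\subset A_\infty$. Both sets have exactly $m$ elements by \autoref{theo2} and \autoref{rem:6edges}, so $A_k=A_\infty$. Hence $w_k$ lies on the branch through $w_\infty$ of every $b_\gamma(2r)$, for $\gamma$ a side-pairing of $D_\Gamma(w_\infty)$, not merely on one of them. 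These branches are generalized circles; if two of them differ, they meet in at most two points and $w_\infty$ is isolated. They cannot all coincide. A common horocycle would force all these $\gamma$ to fix one ideal point, and a common hypercycle to preserve one geodesic $A$. In either case at most four points of $\Gamma w_\infty$ could lie at distance $2r$ from $w_\infty$: two on the common curve, plus two on its mirror image in $A$ in the hypercyclic case. But there are $m\ge 6$ such points. This is the paper's intersection-of-cycles argument, localized at $w_\infty$.

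Alternatively, displacement functions are convex. A direction $v$ with $\langle\nabla f_\gamma,v\rangle\ge 0$ for all active $\gamma$ would then give a geodesic segment of extremal centers along which each $f_\gamma\equiv 2r$. That forces all side-pairings to be hyperbolic or glide reflections with a common axis through $w_\infty$, which is again impossible since $m\ge 6$.
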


\begin{proof}
    Let $S\simeq \mathbb{D}/\Gamma$ be an extremal surface with an extremal disc centered at $[0]_\Gamma$. If there is an extremal disc centered at $[w]_\Gamma$, then $w\in D_\Gamma(0)$ must be in the intersection of the finite set of admissible ED-cycles $\{b_{\gamma}(d)\}$, where $\gamma$ is a  side-pairing transformation and 
    $d=d_\mathbb{D}(w,\gamma(w))$ can be assumed to be bounded by 
    $d_0=2(d_{\chi,n,b}+r_{\chi,n,b})$ 
    by \autoref{lemma:distance_origin} and the triangle inequality. Since the admissible ED-cycles associated to distances smaller than $d_0$ give a finite collection of generalized circles, they can only intersect at finitely many points.
\end{proof}

On the process of finding the extremal discs within a given surface, one can often consider a smaller set of admissible ED-cycles than the one in the proof of the previous proposition. This is the case when we can show that the whole domain $D_\Gamma(0)$ is covered by the closures of a given collection of forbidden regions, bounded by certain admissible ED-cycles. Then, the candidates for  extremal disc centers of $S$ are the intersections of the boundary ED-cycles that do not belong to the interior of any of these forbidden regions. This remark has been essential for simplifying the computational work we do in the rest of the paper.

\begin{example}\label{ex:3-punctured_sphere}
Let us consider again the (extremal) 3-punctured sphere $S$: if $p_1=[0]_\Gamma$ is the center of an extremal disc in $S\simeq \mathbb{D}/\Gamma$, the extremal disc configuration  given by $D_\Gamma(0)$ is the one we analyzed in \autoref{ex_sph_three}. Now, \autoref{remark:preimages} ensures that $S$ has at most another extremal disc centered at $p_2=[w]_\Gamma$, where $w\in D_\Gamma(0)$ is any of the three proper vertices. In fact, it can also be seen that these vertices are the only possible candidates by considering the ED-cycles shown in left-hand side of \autoref{fig:trice-punctured-sphere}.
If this second extremal disc does really exist, the extremal disc configuration determined by $D_\Gamma(w)$ is necessarily the same we see in $D_\Gamma(0)$, so there must be an automorphism of $S$ sending $p_1$ to $p_2$. 

\begin{figure}[!htbp]
\begin{subfigure}{.33\textwidth}
  \centering
  \includegraphics[width=0.7\linewidth, draft=False]{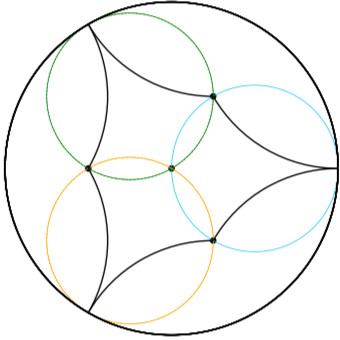}
  \vspace{0.3em}
\end{subfigure}%
\begin{subfigure}{.33\textwidth}
  \centering
  \includegraphics[width=0.73\linewidth, draft=False]{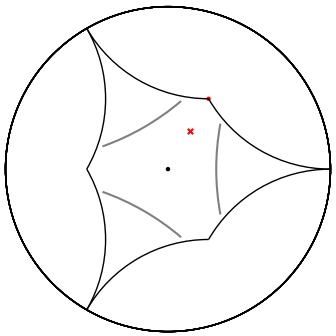}
\end{subfigure}
\begin{subfigure}{.33\textwidth}
  \centering
  \includegraphics[width=0.8\linewidth, draft=False]{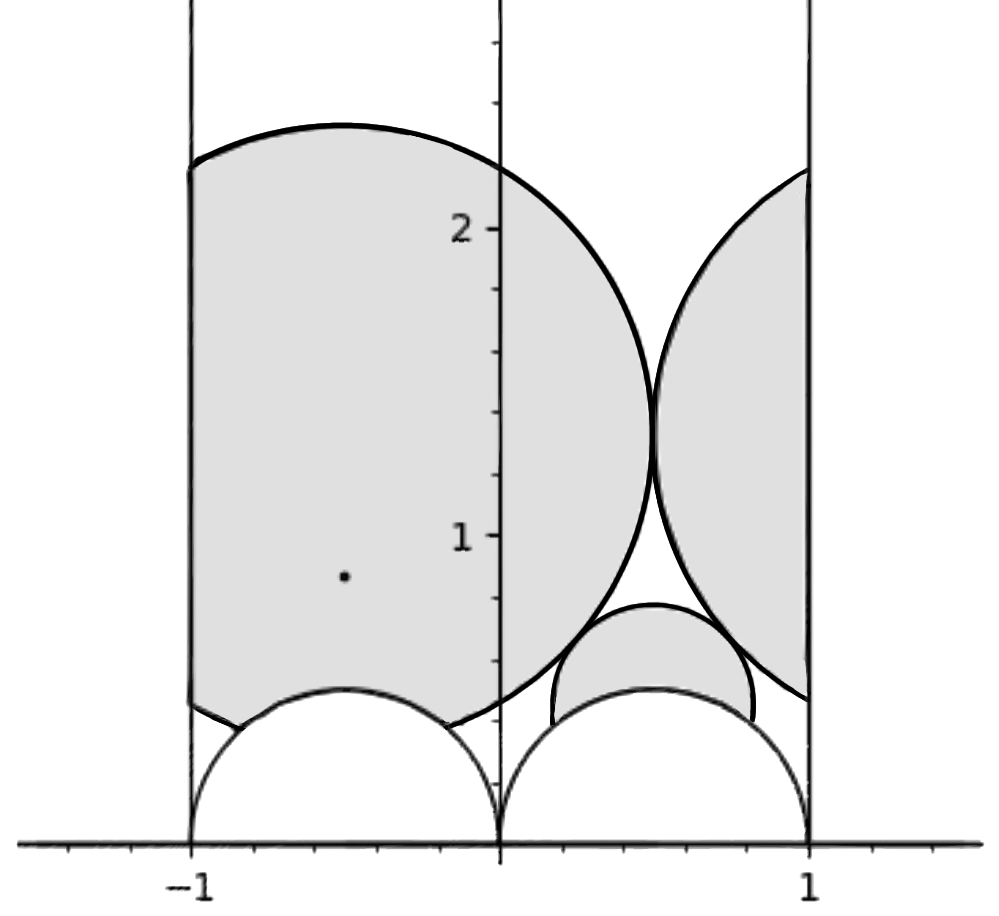}
\end{subfigure}
\caption{At the left, the first admissible ED-cycle for the three distinct (parabolic) side-pairing identifications of $D_\Gamma(0)$. At the middle, the symbol `x' indicates the fixed-point of an elliptic isometry of order $2$ that induces an automorphism that maps the obvious extremal disc to a second hidden one. 
At the right, the shadded region shows an extremal disc in the usual fundamental domain for the equivalent description of   $\widehat{\mathbb{C}}\setminus\{0,1,\infty\}$  as  the quotient space $\mathbb{H}/ \Gamma(2)$. In this model, the two extremal discs lie symmetrically with respect to the imaginary axis, centered at the points $\frac{1}{2} (\sqrt{3} i \pm 1)$.
}
\label{fig:trice-punctured-sphere}
\end{figure}

After the usual normalization, we can assume that $S= \widehat{\mathbb{C}}\setminus\{0,1,\infty\}$. The automorphism group $\text{Aut}(\mathbb{C}\setminus\{0,1,\infty\})$ is the symmetric group of order 6, generated by the order three element $\tau_3(z)= 1/(1-z)$ and the order two element $\tau_2(z)= -1/z$. Clearly, $\tau_3$ corresponds in $S$ to the automorphism $\rho_3$  induced by a rotation of order three around the origin. The fixed points of $\rho_3$, which are precisely $p_1$ and $p_2$, must correspond to the only two values $z\in \widehat{\mathbb{C}}\setminus\{0,1,\infty\}$ such that $z=1/(1-z)$, namely $z=1/2\pm i\sqrt 3/2$. Now, since $\tau_2$ swaps these two values, there exists some automorphism of $S$ swapping $p_1$ and $p_2$. 
In accordance, back to our NEC group description of $S$, one can explicitly show that the elliptic element of order two fixing the point marked `x' in \autoref{fig:trice-punctured-sphere} lies in the normalizer of $\Gamma$. In particular, $p_2$ is in fact the center of a second extremal disc in $S$.
\end{example}

\begin{example}\label{ex:once-punctured-torus}
There is a unique extremal disc configuration with topological type $(+,1,1,0)$ according to the data in \autoref{table:euler-1,-2}, that is, there is a unique extremal once-punctured torus $\mathcal{T}\simeq \mathbb{D}/\Gamma$ with an extremal disc centered at  $p_1=[0]_\Gamma$. The corresponding Dirichlet domain $D_\Gamma(0)$, together with the side-pairing identifications that generate $\Gamma$, are represented at the left-hand side of \autoref{fig:once-punctured-torus}.

One can easily check (see the central picture of \autoref{fig:once-punctured-torus}) that $D_\Gamma (0)$ is covered by the clousure of the forbidden  regions $B_{\gamma_j}(2r_{-1,1,0})$ for $j=1,2,3$ and $B_{\gamma_k}(2r_{-1,1,0},\delta_{-1,1,0})$ for $k=2,3$, where $\gamma_1$ is the parabolic side-pairing  of $D_\Gamma(0)$ and $\gamma_k$ are two of the hyperbolic side-pairings of $D_\Gamma(0)$ for $k=2,3$. We deduce that
the only point $w\in D_\Gamma(0)$ distinct to the origin that can represent an extremal disc center  is the other point marked with a dot in \autoref{fig:once-punctured-torus}. Now, it can be computationally checked that the order $2$ conformal involution that swaps the origin and this point $z$ induces an automorphism of $\mathcal{T}$ that maps $p_1$ to $p_2=[w]_\Gamma \neq p_1$, showing that $\mathcal{T}$ has exactly two extremal discs, centered precisely at these two points.

  \begin{figure}[!htbp]
\begin{subfigure}{.33\textwidth}
  \centering
  \includegraphics[width=0.79\linewidth, draft=False]{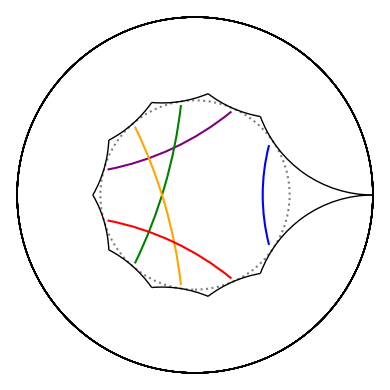}
\end{subfigure}%
\begin{subfigure}{.33\textwidth}
  \centering
  \includegraphics[width=0.75\linewidth, draft=False]{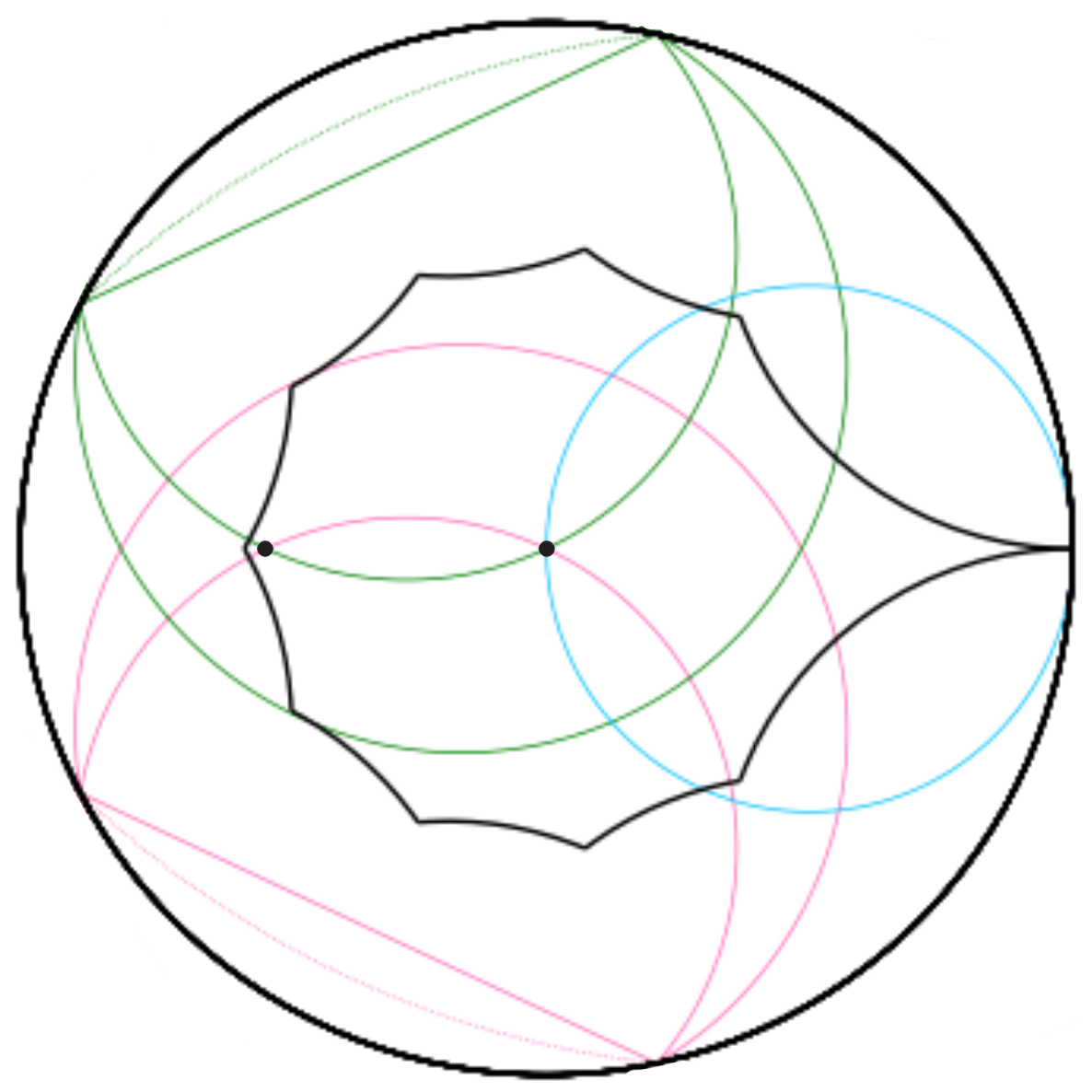}
  \vspace{0.3em}
\end{subfigure}
\begin{subfigure}{.33\textwidth}
  \centering
  \includegraphics[width=0.75\linewidth, draft=False]{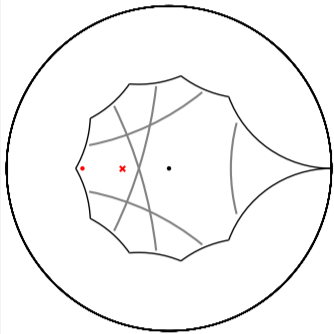}
\end{subfigure}
\caption{At the left, the extremal disc configuration that describes the extremal once-punctured torus $\mathcal{T}$. At the center, the ED-cycles needed to study the extremal disc centers in $\mathcal{T}$. At the right, there are three marked points: the two dots represent the preimages of extremal disc centers in $\mathcal{T}$ and the `x' symbol is the fixed point of an elliptic element of order two that swaps both candidates and induces an automorphism of $\mathcal{T}$.}
\label{fig:once-punctured-torus}
\end{figure}

The understanding of the once-punctured torus $\mathcal{T}$ we have reached by the methods used in this paper has enabled us to obtain a truly remarkable result, which is the computational determination of the complex lattice that corresponds to the compactification of $\mathcal{T}$. That is, we have been able to find, inside the moduli space of genus 1, the location of this torus that is so special for metric reasons. It is the unique complex torus such that the natural (hyperbolic) metric defined in the non-compact surface obtained by removing just one point exhibits an embedded metric disc that is larger than it can be if we do a puncture to any other complex torus. The details will be given in a forthcoming work, where we show that $\mathcal{T}\simeq (\mathbb{C}/\Lambda_\tau)\setminus\{[0]\}$ for  $\Lambda_\tau=\mathbb{Z}+\tau\mathbb{Z}$ with $\tau=-1/2+yi$, $y\approx 1.15005.$
\end{example}

\begin{example} \label{ex:twicepunct_torus}
 
  By computing the ED-cycles in \autoref{fig_oridifferentsurfaces}, it can be shown that there are no hidden extremal disc centers in any of the orientable extremal disc configurations depicted in \autoref{fig_remarkori}. As a consequence, we deduce that their underlying Riemann surfaces cannot be isomorphic. The point here is that an isomorphism of these surfaces would necessarily be induced by an orientation-preserving element $\tau$ that, due to the absence of hidden extremal discs, would send one obvious disc center to the other, and this is obviously not possible.

 \begin{figure}[!htbp]
\begin{subfigure}{.33\textwidth}
  \centering
  \includegraphics[width=.79\linewidth]{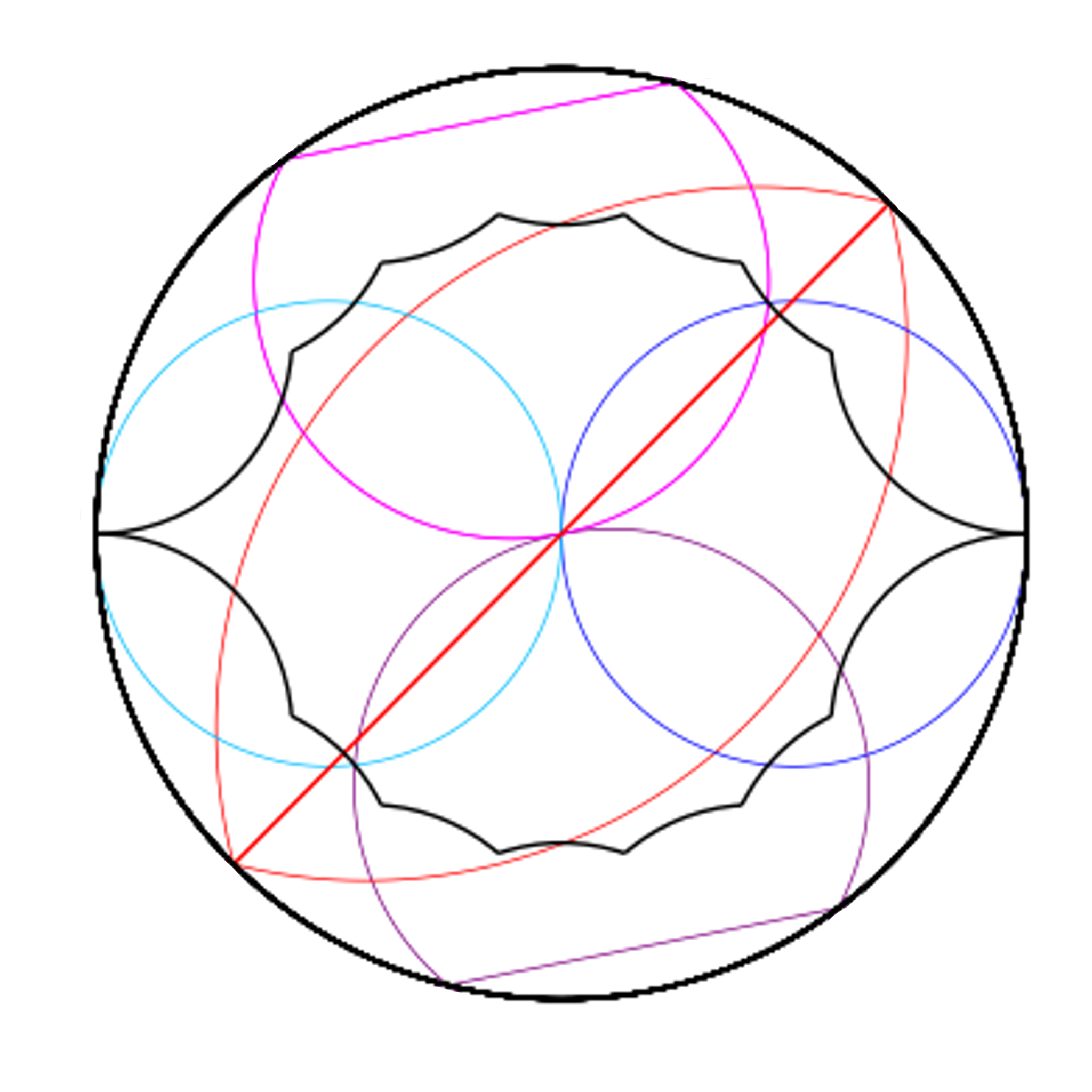}
\end{subfigure}%
\begin{subfigure}{.33\textwidth}
  \centering
  \includegraphics[width=.79\linewidth]{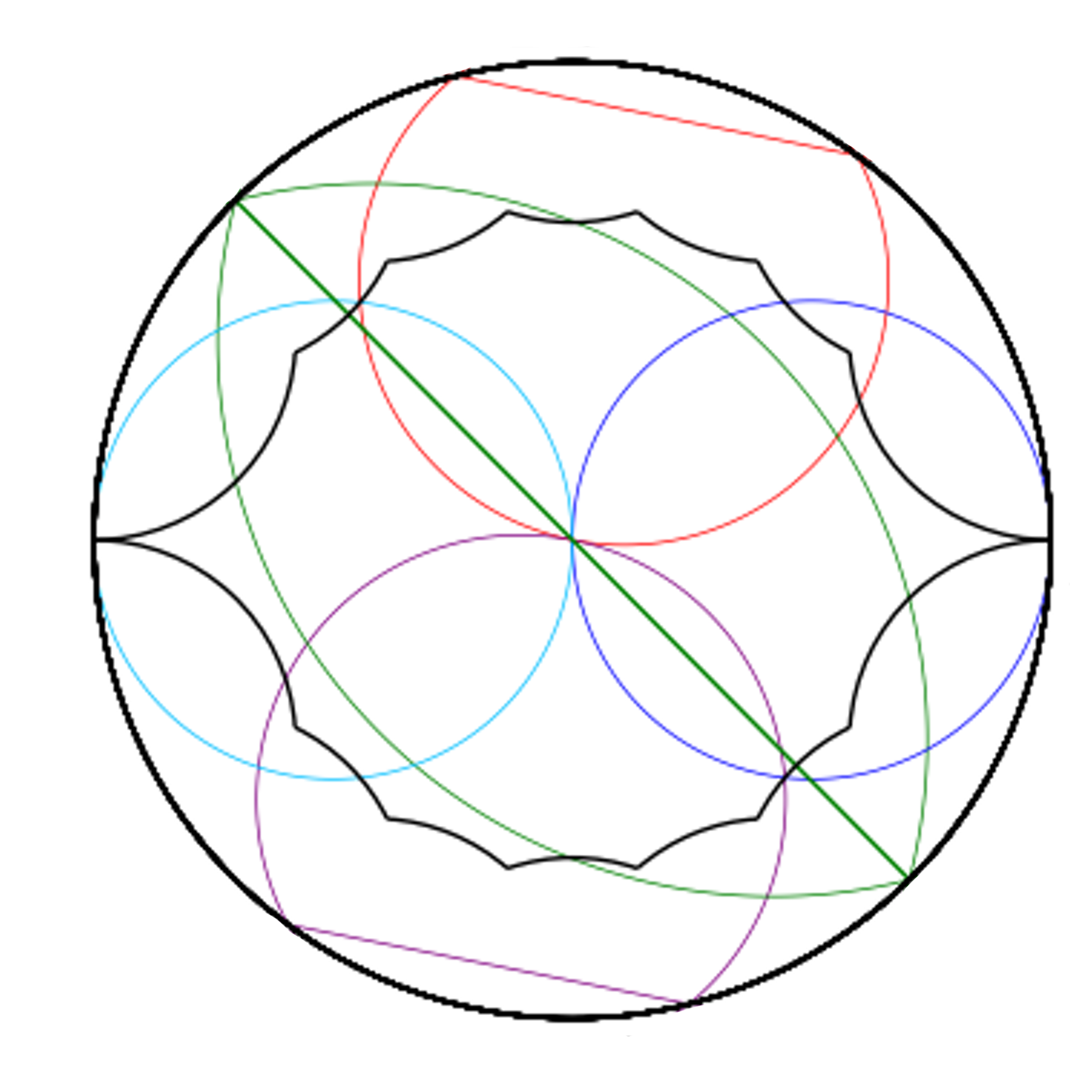}
\end{subfigure}
\caption{Forbidden regions of both orientable $(+,1,2,0)$-configurations in \autoref{fig_remarkori}.}
\label{fig_oridifferentsurfaces}
\end{figure}

\end{example}

\section{A third application: counting extremal discs and extremal surfaces} \label{sec:counting}

The results obtained in Sections \ref{sec:uniformization} to \ref{section:location} provide three complementary ingredients for the study of extremal surfaces. The description of the Dirichlet domains in \autoref{theo2} gives a uniform geometric model for extremal discs, the EBF-correspondence translates extremal disc configurations into a group-theoretic problem, and the theory of admissible ED-cycles supplies effective constraints on the location of hidden extremal discs. The purpose of this section is to combine these ingredients in order to pass from extremal disc configurations to the corresponding extremal surfaces. More precisely, we show how the methods developed in the previous sections can be used to determine all extremal surfaces, count their extremal discs and identify their automorphism groups, and then, we illustrate how this can be done in the case of two specific families of surfaces.

We introduce first the scheme for a general computational method for the location of hidden extremal discs for a given extremal disc configuration. If $S=\mathbb{D}/\Gamma$ is an extremal surface of Euler characteristic $\chi$, $n$ cusps and $b$ boundary components that has an extremal disc centered at $p=[0]_\Gamma$, the procedure to find candidates in $D_\Gamma(0)$ is as follows:

\begin{enumerate}
    \item Compute the complete list of admissible distances $\mathcal{D}_{\chi,n,b}$ for the triplet $(\chi,n,b)$. Taking into account the argument given in the proof of \autoref{pr:finite}, restrict to the subset $\mathcal{D}^*_{\chi,n,b}=\{d\in\mathcal{D}_{\chi,n,b}:d\leq 2(r_{\chi,n,b}+d_{\chi,n,b})\}$.
    \item Consider a subset $\mathfrak{B}$ of the set of admissible ED-cycles  $$\mathfrak{B}_0=\{b_\gamma(d):\gamma \text{ is a side-pairing of }D_\Gamma(0),\, d\in\mathcal{D}^*_{\chi,n,b}\}$$
    such that the closure of the regions bounded by cycles in $\mathfrak{B}$ cover the whole $D_\Gamma(0)$. Denote $\mathfrak{C}\subset D_\Gamma(0)$ the set of points given by the intersections of pairs of cycles in $\mathfrak{B}$ that are not contained in any of the forbidden regions: these are our candidates.
    \item Given $w\in \mathfrak{C}$, if we can find  an isometry $\tilde \psi$ of $\mathbb{D}$ in the normalizer of $\Gamma$ such that the induced automorphism $\psi \in \mathrm{Aut}(S)$ maps $p=[0]_{\Gamma}$ to $q=[w]_{\Gamma}$, $q$ is the center of a hidden extremal disc, and the associated extremal disc configuration coincides with the configuration of the disc centered at $p$.
    \item Even if such an automorphism $\psi$ does not exist, we can still study the Dirichlet domain $D_\Gamma(w)$. If this domain fulfills the conditions of \autoref{theo2} then $[w]_\Gamma$ is the center of an extremal disc in $S$, even if it determines an extremal disc configuration different to the starting one.
\end{enumerate}

We have used SageMath \cite{sagemath} to develop this computational analysis for the extremal disc configurations belonging to the following two families extracted from \autoref{th:tables}:

\begin{itemize}
    \item $\mathcal{F}_1=\{\text{non-closed extremal disc configurations with Euler characteristic } \chi=-1\}$, 
    shown in Figures   \ref{fig:extremalsurfacesg=0_rho=3} and  \ref{fig:extremal_surfaces_1}.
    \item $\mathcal{F}_2=\{\text{extremal disc configurations with Euler characteristic } \chi=-2 \text{ and extended}$ $\text{number of boundary components } \rho=4\}$, displayed in \autoref{fig:extremal_surfaces_3}.
\end{itemize}

The reason behind choosing the family $\mathcal{F}_2$ is that we want to explore how our methods extend the results obtained by Beauchamp  for punctured spheres (\cite{beauchamp2017}), given the fact that, as we proceed to explain now, completing the study for this family does not require too lengthy computations. In any case, extending our classification to all non-closed extremal disc configurations with Euler characteristic $\chi=-2$ will be a natural project to address in the future, but we don't include it here in order to keep the extension of the paper limited, considering the large number of surfaces involved (see \autoref{table:euler-1,-2}). The detailed SageMath codes we have used for our study of the families $\mathcal{F}_1$ and $\mathcal{F}_2$ can be found in \cite{girondo-munozGitHub}. There are some considerations to apply to both families:

Let $\mathcal{C}\in \mathcal{F}_1\cup \mathcal{F}_2$ be an extremal disc configuration realized as the Dirichlet domain $D_\Gamma(0)$, for an  underlying surface $S=\mathbb{D}/\Gamma$ with  Euler characteristic $\chi$, $n$ cusps and $b$ boundary components. If  $d_1=2 r_{\chi,n,b}$ and $d_2$ are the first two admissible distances for $\mathcal{C}$, by \autoref{admissible_distances} these are in fact the first two admissible distances of the whole list  $\mathcal{D}_{\chi,n,b}$. A fortunate thing happens for all configurations in these two families: the closures of some of the forbidden regions $B_\gamma(d_1)$ and $B_\gamma(d_1,d_2)$ always cover the whole domain $D_\Gamma(0)$, so in fact we do not need to compute the list $\mathcal{D}_{\chi,n,b}$. That is, we can study each extremal disc configuration in these families individually, and this fact simplifies the process considerably.

A summary of the results obtained by applying our method to all the configurations in $\mathcal{F}_1$ and $\mathcal{F}_2$ is the following. In all cases, there are at most two candidates that are not $\Gamma$-related. These are the origin and, in some cases, a  second point $w\in D_\Gamma(0)$. These points have been marked with a dot in figures \ref{fig:extremalsurfacesg=0_rho=3} to \ref{fig:extremal_surfaces_3}. In every case in which the second point $w$ exists, we have found that there is an elliptic isometry $\tilde\psi\in N(\Gamma)$ of order $2$ that swaps $w$ and $\eta(0)$ for some $\eta\in\Gamma$, therefore $\tilde\psi$ induces an automorphism $\psi:S\longrightarrow S$. The fixed point of $\tilde\psi$ has been marked with the symbol `x' in figures \ref{fig:extremalsurfacesg=0_rho=3} to  \ref{fig:extremal_surfaces_3} and, if $\eta\neq \text{Id}$, the boundary of $\eta(D_\Gamma(0))$ has been represented with a dotted line.

In other words, whenever we have a second possible disc center $[w]_\Gamma\in S$, there exists an automorphism $\psi$ of the surface $S$ swapping the obvious extremal disc center $[0]_\Gamma$ and $[w]_\Gamma$, so $[w]_\Gamma$ is an extremal disc center
 and the Dirichlet domain $D_\Gamma(w)$ represents the extremal disc configuration $\mathcal{C}$ too. As a consequence, we see that each extremal disc configuration in the families $\mathcal{F}_1$ and $\mathcal{F}_2$ defines a different extremal surface.

 The discussion above shows that the combination of the EBF-correspondence with the ED-cycle method is sufficient to determine all candidates for hidden extremal discs and to decide whether distinct extremal disc configurations arise from the same extremal surface. Consequently, the counting problem for extremal surfaces becomes effective.

Applying this strategy to the families $\mathcal{F}_1$ and $\mathcal{F}_2$ yields a complete
determination of the corresponding extremal surfaces. In particular,
we can decide when different extremal disc configurations correspond
to the same underlying surface and compute the exact number of
extremal discs in each case.

\begin{theorem}\label{th:extremal_surfaces-1}
    There exist exactly 20 non-closed extremal surfaces with Euler characteristic $\chi=-1$, topologically classified as in \autoref{table:euler-1,-2}. Only two of them admit a unique embedded extremal disc, while all the remaining ones have exactly two extremal discs that are related by an automorphism of the surface. 
\end{theorem}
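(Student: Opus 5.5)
The plan is to derive the count from \autoref{th:tables} together with the hidden-disc analysis of this section. The non-closed triplets with $\chi=-1$ are all rows of Table~\ref{table:euler-1,-2} with $(n,b)\neq(0,0)$. Their configuration counts are $5+5+2+2+2+1+1+1+1=20$, so the family $\mathcal{F}_1$ consists of exactly 20 extremal disc configurations. By \autoref{th:tables} this list is complete, and by the DeBlois--Romanelli characterization (\autoref{theo1}, \autoref{theo2}) every non-closed extremal surface with $\chi=-1$ realizes at least one of these configurations at the lift of any extremal disc center. The number of extremal surfaces is therefore at most 20, and it equals 20 exactly when no two inequivalent configurations come from isometric surfaces.

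To separate the configurations, I will run the four-step procedure above on each $\mathcal{C}\in\mathcal{F}_1$, realized as $D_\Gamma(0)$. Using \autoref{admissible_distances}, I will take $d_1=2r_{-1,n,b}$ and $d_2=2D_{-1,n,b}$ if $b>0$, or $d_2=\delta_{-1,n,b}$ if $b=0$. The three-punctured sphere has no type IV edge and is handled separately by \autoref{ex:3-punctured_sphere}. For each configuration I then check that the closures of the forbidden regions $B_\gamma(d_1)$ and $B_\gamma(d_1,d_2)$, for side-pairings $\gamma$, cover $D_\Gamma(0)$. \autoref{remark:preimages} cuts down the region to be covered, since it excludes the ideal triangles and the non-shaded halves of the quadrilaterals. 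Once the covering holds, the candidate set $\mathfrak{C}$ consists of the intersection points of the bounding ED-cycles that lie outside every open forbidden region. I expect this to produce, up to $\Gamma$, at most one candidate $w$ besides the origin.

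Whenever such a $w$ exists, I will exhibit an order-two elliptic isometry $\tilde\psi$ swapping $w$ with some $\eta(0)$, $\eta\in\Gamma$. To show $\tilde\psi\in N(\Gamma)$, it suffices to check that $\tilde\psi$ conjugates each side-pairing generator into $\Gamma$. It then induces an automorphism of $S$ carrying $[0]_\Gamma$ to $[w]_\Gamma$, so $[w]_\Gamma$ is a second extremal disc center with the same configuration $\mathcal{C}$. This gives the dichotomy in the statement: each surface has either one extremal disc or two discs related by an automorphism. Moreover, every extremal disc of $S$ then has configuration $\mathcal{C}$, so a surface cannot carry two inequivalent configurations. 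Hence the map from configurations to surfaces is injective, giving exactly 20 surfaces. Counting the configurations with no second candidate should yield exactly two; I expect these to be among the cases where the forbidden regions cover everything except the origin.

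The main obstacle is the covering verification and candidate extraction for each of the 20 configurations. This means computing explicit side-pairings from the polygon data of \autoref{theo2}, drawing hypercycles and horocycles via \autoref{move_isometries}, and certifying both the covering and the normalizer membership. I would carry this out numerically with exact trigonometric verification in SageMath, exploiting symmetries of $D_\Gamma(0)$ to reduce to a fundamental sector. If the first two admissible distances turned out not to suffice for some configuration, I would fall back on computing further elements of $\mathcal{D}^*_{-1,n,b}$.
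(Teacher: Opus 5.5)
Your proposal follows essentially the same route as the paper: you read off the 20 configurations from Table~\ref{table:euler-1,-2}, cover each $D_\Gamma(0)$ by the forbidden regions built from the first two admissible distances of \autoref{admissible_distances} (treating the three-punctured sphere as in \autoref{ex:3-punctured_sphere}), swap any second candidate with the origin by an order-two elliptic element of $N(\Gamma)$, and conclude that distinct configurations give distinct surfaces. As in the paper, the substantive content rests on the SageMath computation, which you describe but do not carry out: the covering, the candidate list, the normalizer checks, and the fact that exactly $(-,2,1,0)_3$ and $(-,2,0,1)_3$ have no hidden disc.
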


Notice that, in particular, when $\chi=-1$ the number of extremal surfaces agrees with the number of extremal disc configurations. It is for us uncertain whether or not this fact is true in general. Figures \ref{fig:extremalsurfacesg=0_rho=3} and  \ref{fig:extremal_surfaces_1} describe these  surfaces. 

\begin{figure}[!htbp]
\begin{subfigure}{.24\textwidth}
  \centering
\includegraphics[width=0.99\linewidth]{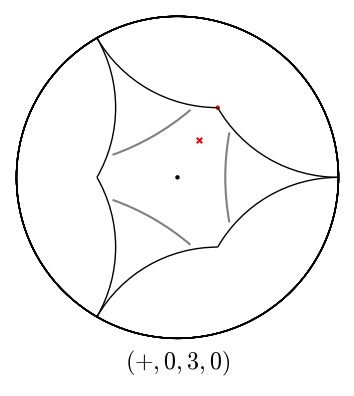}
\end{subfigure}
\begin{subfigure}{.24\textwidth}
\includegraphics[width=0.99\linewidth]{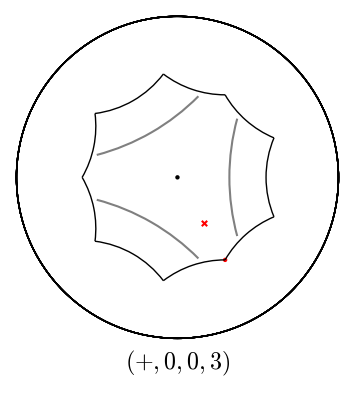}
\end{subfigure}
\begin{subfigure}{.24\textwidth}
\includegraphics[width=0.99\linewidth]{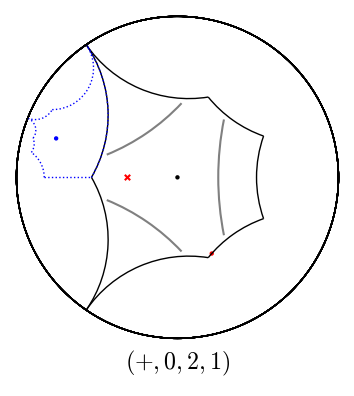}
\end{subfigure}
\begin{subfigure}{.24\textwidth}
\includegraphics[width=0.99\linewidth]{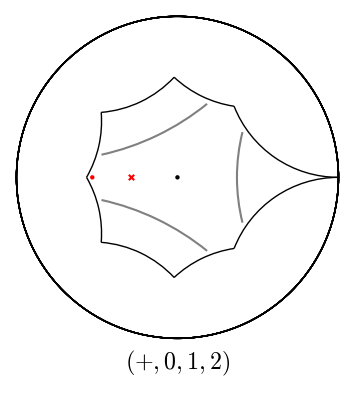}
\end{subfigure}%
\caption{Case $\chi=-1$: the four extremal surfaces of genus 0 with $\rho=3$ and their extremal disc centers.}
\label{fig:extremalsurfacesg=0_rho=3}
\end{figure}

\begin{figure}[!htbp]
\begin{subfigure}{.24\textwidth}
  \centering
  \includegraphics[width=\linewidth]{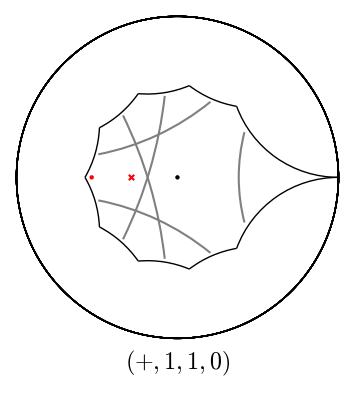}
\end{subfigure}
\begin{subfigure}{.24\textwidth}
  \centering
  \includegraphics[width=\linewidth]{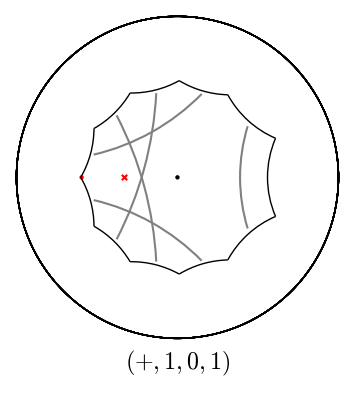}
\end{subfigure}
\begin{subfigure}{.24\textwidth}
  \centering
  \includegraphics[width=\linewidth]{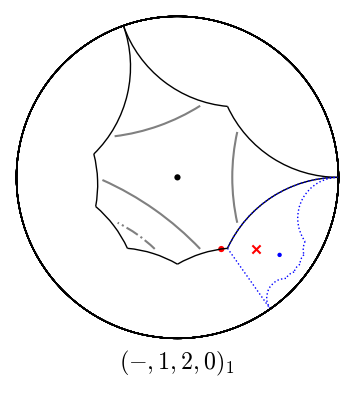}
\end{subfigure}
\begin{subfigure}{.24\textwidth}
  \centering
  \includegraphics[width=\linewidth]{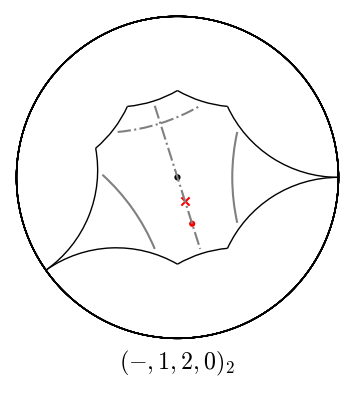}
\end{subfigure}

\begin{subfigure}{.24\textwidth}
  \centering
  \includegraphics[width=\linewidth]{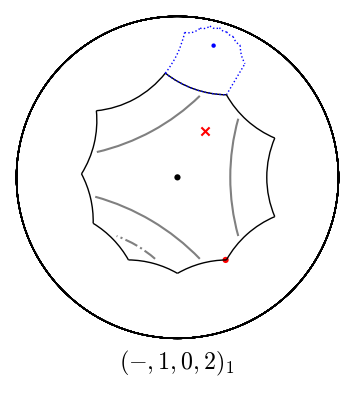}
\end{subfigure}
\begin{subfigure}{.24\textwidth}
  \centering
  \includegraphics[width=\linewidth]{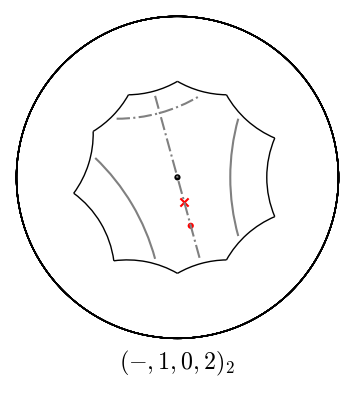}
\end{subfigure}
\begin{subfigure}{.24\textwidth}
  \centering
  \includegraphics[width=\linewidth]{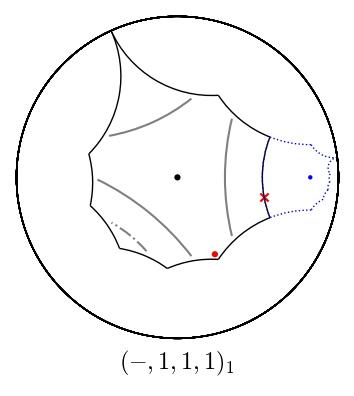}
\end{subfigure}
\begin{subfigure}{.24\textwidth}
  \centering
  \includegraphics[width=\linewidth]{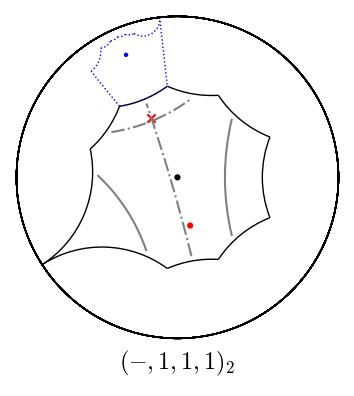}
\end{subfigure}

\begin{subfigure}{.24\textwidth}
  \centering
  \includegraphics[width=\linewidth]{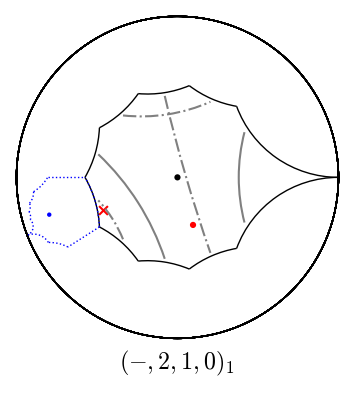}
\end{subfigure}
\begin{subfigure}{.24\textwidth}
  \centering
 \includegraphics[width=\linewidth]{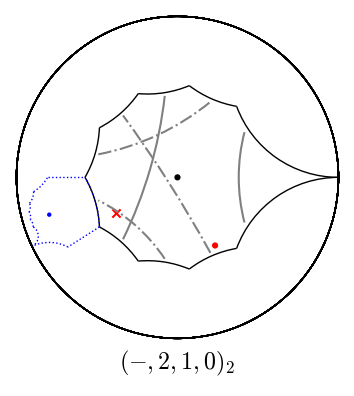}
\end{subfigure}
\begin{subfigure}{.24\textwidth}
  \centering
  \includegraphics[width=\linewidth]{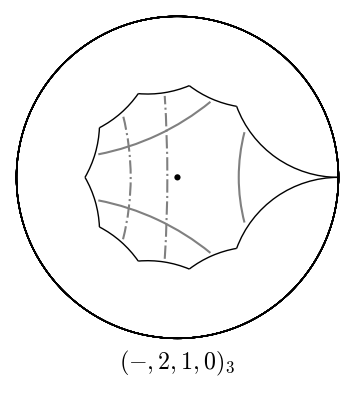}
\end{subfigure}
\begin{subfigure}{.24\textwidth}
  \centering
  \includegraphics[width=\linewidth]{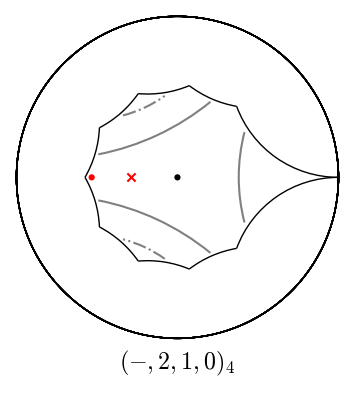}
\end{subfigure}

\begin{subfigure}{.24\textwidth}
  \centering
  \includegraphics[width=\linewidth]{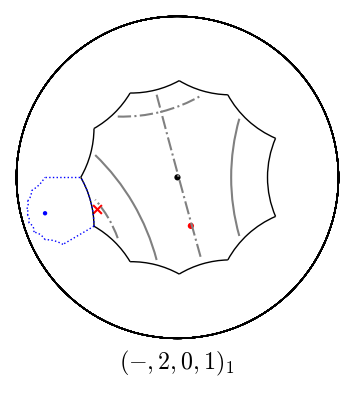}
\end{subfigure}
\begin{subfigure}{.24\textwidth}
  \centering
  \includegraphics[width=\linewidth]{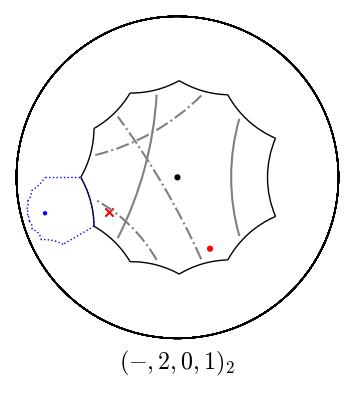}
\end{subfigure}
\begin{subfigure}{.24\textwidth}
  \centering
  \includegraphics[width=\linewidth]{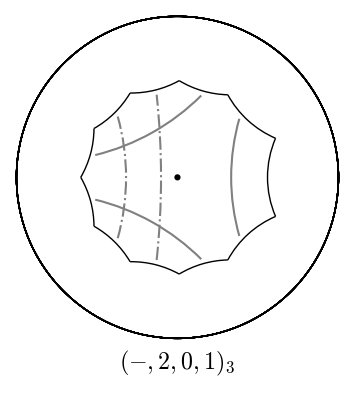}
\end{subfigure}
\begin{subfigure}{.24\textwidth}
  \centering
  \includegraphics[width=\linewidth]{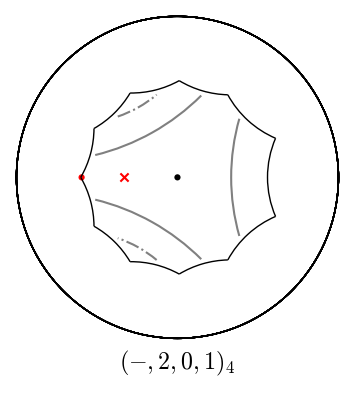}
\end{subfigure}
\caption{All the remaining extremal surfaces with  Euler characteristic $\chi=-1$, and their extremal disc centers.}
\label{fig:extremal_surfaces_1}
\end{figure}

\begin{theorem}\label{th:extremal_surfaces-2}
    There exist exactly 7 different extremal spheres with $n$ cusps and $b$ boundary components such that $n+b=4$, topologically classified as in \autoref{table:euler-1,-2}. If $n\equiv 0\mod 4$, the surface has two extremal discs, while if $n\equiv 1 \mod 2$, the surface has a single extremal disc. In the case $n=2$ and $b=2$, two of the three surfaces have two extremal discs while the remaining one has a single extremal disc. The surfaces with two extremal discs admit always an automorphism that swaps them.
\end{theorem}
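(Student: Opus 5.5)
The proof of \autoref{th:extremal_surfaces-2} follows the scheme of \autoref{sec:counting}, applied to the family $\mathcal{F}_2$. For $\chi=-2$ and $\rho=4$ the surface has genus $0$ and is orientable, and $M=6-6\chi-2\rho=10$. The first step is to list the extremal disc configurations. By \autoref{th:tables} (that is, by \autoref{count_edc} applied to the classes in $C_{10,4}$) there is exactly one configuration for each of $(n,b)=(4,0),(3,1),(1,3),(0,4)$ and three for $(2,2)$, so seven in total. These can be made explicit as the EBF-preimages of the relevant subgroups of $\Delta^{\pm}(2,3,10)$. Each configuration gives a polygon $D_\Gamma(0)$ as in \autoref{theo2} with $m=18-2n-b$ sides, together with its side-pairing pattern.

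The second step is to find, for each configuration, all hidden extremal disc centers. By \autoref{remark:preimages}, candidates lie only in the compact triangular pieces and in the right-angled subtriangles of the quadrilaterals. By \autoref{admissible_distances}, the first two admissible distances are $2r_{-2,n,b}$ together with either $2D_{-2,n,b}$ (if $b>0$) or $\delta_{-2,4,0}$ (if $b=0$), and these are the same for every configuration with the same $(n,b)$. I would compute the ED-cycles $b_\gamma(d_1)$ and $b_\gamma(d_2)$ for all side-pairings $\gamma$, using \autoref{move_isometries}: horocycles for parabolic side-pairings, hypercycles for reflections and hyperbolic side-pairings. I would then check numerically (in SageMath) that the closures of the forbidden regions $B_\gamma(d_1)$ and $B_\gamma(d_1,d_2)$ cover $D_\Gamma(0)$. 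This covering check is the main obstacle, since it is a concrete computation for each of the seven polygons and nothing in the earlier theory guarantees it. If it failed somewhere, I would add the third admissible distance from the list $\mathcal{D}^*_{-2,n,b}$. Once the cover holds, the candidates are the points where the bounding cycles meet outside every open forbidden region, and I would reduce them modulo $\Gamma$. The expected outcome is at most one candidate $w$ besides the origin.

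The third step decides whether a candidate $w$ is a genuine center. When $w$ exists, I would look for an order-two elliptic element $\tilde\psi\in N(\Gamma)$ exchanging $w$ with some $\eta(0)$, $\eta\in\Gamma$. Its fixed point should be visible from the symmetry of the polygon, for instance the midpoint between $0$ and $w$ or a point on an edge. The check is that $\tilde\psi$ conjugates the side-pairing generators into $\Gamma$. If it does, it induces an automorphism of $S$ swapping the two discs, and so $[w]_\Gamma$ is a center whose configuration is again $\mathcal{C}$. I expect $w$ to exist exactly for $(n,b)=(4,0),(0,4)$ and for two of the three $(2,2)$ configurations. For $n$ odd I expect it to be absent, because the configurations with $n=3,b=1$ and $n=1,b=3$ break the required symmetry.

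Finally, distinct configurations must give distinct surfaces. Every surface has at most two extremal disc centers, and when there are two they are exchanged by an automorphism, so all extremal discs of a given surface carry the same configuration. Two different configurations therefore cannot come from the same surface. This gives exactly seven surfaces and the stated count of extremal discs for each.
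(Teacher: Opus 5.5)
Your proposal is correct and follows essentially the same route as the paper. The seven configurations are read off from \autoref{th:tables}, and the paper then runs the SageMath ED-cycle computation on each one. It finds that the closures of the forbidden regions $B_\gamma(d_1)$ and $B_\gamma(d_1,d_2)$, which are valid for every configuration of the triplet as you note, already cover $D_\Gamma(0)$. At most one candidate $w$ besides the origin survives, and whenever it does, an order-two elliptic element of $N(\Gamma)$ swaps $w$ with some $\eta(0)$. Distinctness of the seven surfaces then follows exactly as in your last paragraph. One small point: the absence of $w$ for odd $n$ is an output of the candidate computation itself, not of a symmetry heuristic.
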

 \autoref{fig:extremal_surfaces_3} describe all these extremal spheres. This theorem can be seen as the natural extension of the statement in [\cite{beauchamp2017}, Theorem 4.9].

\begin{figure}[!htbp]
    \begin{subfigure}{.24\textwidth}
  \centering
  \includegraphics[width=\linewidth]{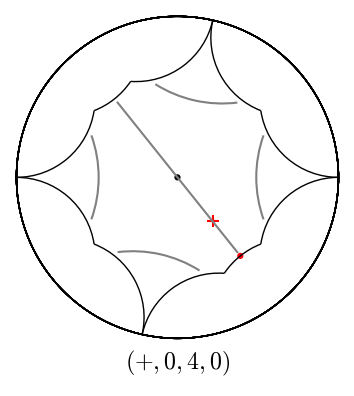}
\end{subfigure}%
\begin{subfigure}{.24\textwidth}
  \centering
  \includegraphics[width=\linewidth]{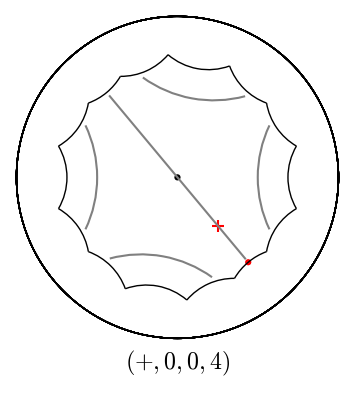}
\end{subfigure}
\begin{subfigure}{.24\textwidth}
  \centering
  \includegraphics[width=\linewidth]{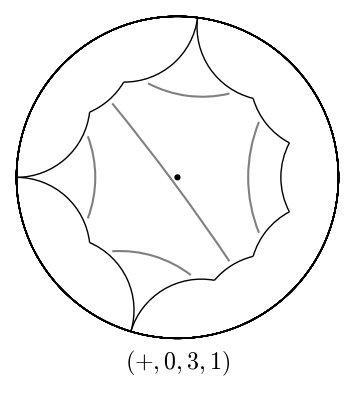}
\end{subfigure}%
\begin{subfigure}{.24\textwidth}
  \centering
  \includegraphics[width=\linewidth]{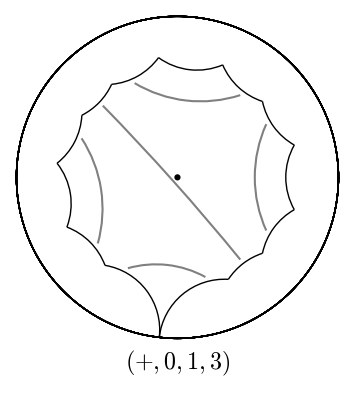}
\end{subfigure}

\begin{subfigure}{.24\textwidth}
  \centering
  \includegraphics[width=\linewidth]{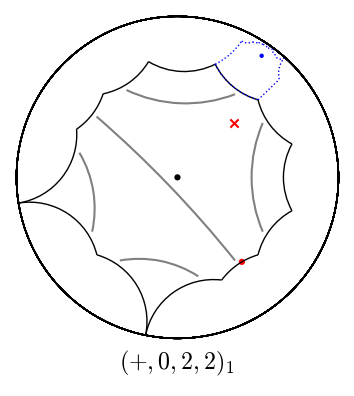}
\end{subfigure}
\begin{subfigure}{.24\textwidth}
  \centering
  \includegraphics[width=\linewidth]{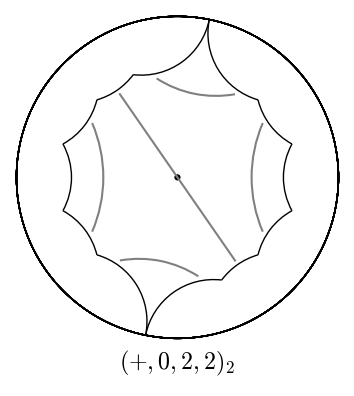}
\end{subfigure}
\begin{subfigure}{.24\textwidth}
  \centering
  \includegraphics[width=\linewidth]{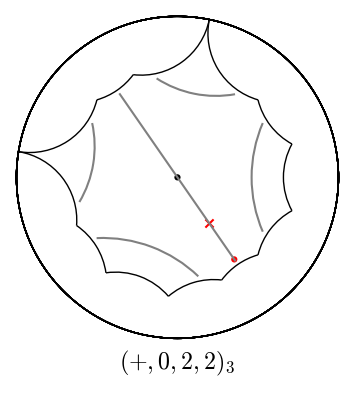}
\end{subfigure}
\caption{Extremal surfaces and extremal disc centers for the topological type $(+,0,n,b)$, with extended number of boundary components $n+b=4$.}
\label{fig:extremal_surfaces_3}
\end{figure}

\subsection{Identification of the automorphism groups}

The explicit location of the extremal disc centers in the  families $\mathcal{F}_1$ and $\mathcal{F}_2$ helps us to identify the group of automorphisms of each of these surfaces, since automorphisms preserve the set of extremal disc centers.
Recall that, according to \autoref{th:extremal_surfaces-1} and \autoref{th:extremal_surfaces-2}, 
 if $S\simeq \mathbb{D}/\Gamma$ is such an extremal surface with an extremal disc centered at $p_1=[0]_\Gamma$, $S$ can have at most another extremal disc centered at $p_2=[z_2]_\Gamma$. Therefore, we can distinguish two classes of automorphisms of $S$:

\begin{itemize}
    \item  If such a point $p_2 \in S$ exists and   $\psi$ is the automorphism of $S$ swapping $p_1$ and $p_2$, whose existence we already know, we can have another $\varphi \in \text{Aut}(S)$ that  verifies $\varphi (p_1)=p_2$ but then $\varphi\circ\psi\in \text{Aut}(S)$ fixes $p_1$.
    \item In any other case, any $\varphi \in \text{Aut}(S)$ fixes $p_1$, and $\varphi$ has a lift $\tilde \varphi$ such that $\tilde \varphi (0)=0$ and $\tilde \varphi(D_\Gamma (0))=D_\Gamma(0)$.
\end{itemize}

Consequently, the determination of the whole automorphism group is based on checking what transformations $\tilde \varphi$ fixing the origin, which can be either elliptic isometries or hyperbolic reflections, lie in the normalizer of $\Gamma$.  Observe that, in accordance to \autoref{rem:orient_withoutboundary}, for orientable surfaces without boundary we restrict to orientation preserving automorphisms and, in particular,  $\tilde\varphi $ can only be an elliptic isometry.

In the cases where a hidden extremal disc center $p_2\in S$ exists, the point $\tilde \varphi (z_2)\in D_\Gamma(0)$ must project into $p_2$ too, and we see that:

\begin{itemize}
    \item If $z_2$ belongs to the interior of $D_\Gamma(0)$, $\tilde \varphi$ can only be the hyperbolic reflection in the geodesic connecting the origin and $z_2$.
    \item Otherwise, $\tilde \varphi$ can be an elliptic isometry of orders $3$ or $2$ (depending on whether $z_2$ is a vertex or not) or any reflection preserving the $\Gamma$-orbit of $z_2$ in $D_\Gamma(0)$.
\end{itemize}

Once we have determined which isometries could induce automorphisms, we can decide if any of them actually do by checking if they normalize the corresponding NEC (or Fuchsian) group uniformizing the surface. 
Doing this for each of the extremal surfaces in figures \ref{fig:extremalsurfacesg=0_rho=3},   \ref{fig:extremal_surfaces_1} and \ref{fig:extremal_surfaces_3}, one can show that the automorphism groups are as shown in \autoref{th:auts}.

\begin{table}[!htbp]
    \centering
    \begin{tabular}{|c|c|c|c|}
    \hline
    Surface $S$ & $\mbox{Aut(S)}$ & Surface $S$ & \mbox{Aut(S)} \\
        \hline
    $ (+,0,3,0)$  &  $S_3$ & $(+,0,0,3)$ & $D_6$    \\    \hline    
            $(+,0,2,1)$    & $\mathcal{C}_2 \times C_{\overline{2}}$   & $(+,0,1,2)$  &  $\mathcal{C}_2 \times C_{\overline{2}}$  \\     \hline
            $(+,1,1,0) $   & $\mathcal{C}_2$  & $(+,1,0,1)$  &  $\mathcal{C}_2 \times C_{\overline{2}}$ \\
                \hline
            $(-,1,2,0)_1$    & $\mathcal{C}_2 \times C_{\overline{2}}$  & $(-,1,2,0)_2$  & $\mathcal{C}_2 \times C_{\overline{2}}$ \\
                \hline
            $(-,1,0,2)_1$ & $\mathcal{C}_2 \times C_{\overline{2}}$     & $(-,1,0,2)_2$   & $\mathcal{C}_2 \times C_{\overline{2}}$ \\
                \hline
                $(-,1,1,1)_1$ & $\mathcal{C}_2 $     & $(-,1,1,1)_2$   & $\mathcal{C}_2$  \\
                \hline
            $(-,2,1,0)_1$   &  $\mathcal{C}_2 $ & $(-,2,1,0)_2 $  & $\mathcal{C}_2 $ \\
                \hline
            $(-,2,1,0)_3$ & $ C_{\overline{2}}$     & $(-,2,1,0)_4 $  & $\mathcal{C}_2 \times C_{\overline{2}}$ \\
                \hline
                $(-,2,0,1)_1$   &  $\mathcal{C}_2 $ & $(-,2,0,1)_2 $  & $\mathcal{C}_2 $ \\
                \hline
            $(-,2,0,1)_3$ & $ C_{\overline{2}}$     & $(-,2,0,1)_4 $  & $\mathcal{C}_2 \times C_{\overline{2}}$ \\
                \hline
            $(+,0,4,0)$    & $\mathcal{C}_2 \times C_{2}$   & $(+,0,0,4)$   & $\mathcal{C}_2 \times C_{\overline{2}}
            \times C_{\overline{2}}$  \\
                \hline
$(+,0,3,1)$   & 1 & $(+,0,1,3)$      &  1 \\
\hline 
 $(+,0,2,2)_1$    &  $\mathcal{C}_2 \times C_{\overline{2}}$  & $(+,0,2,2)_2$   & $C_2$ \\
                \hline
  $(+,0,2,2)_3$    &  $\mathcal{C}_2 \times C_{\overline{2}}$  & \   & \ \\
      \hline    
    \end{tabular}
    \caption{Automorphism groups of the extremal surfaces in $\mathcal{F}_1$ and $\mathcal{F}_2$.
    }
    \label{th:auts}
\end{table}

 The case of $(+,0,3,0)$ has been explained in detail in \autoref{ex:3-punctured_sphere}. Now, for $(+,0,0,3)$ we readly see that the automorphisms induced by elements fixing the origin form an index 2 subgroup isomorphic to $D_3$, and the full group is obtained adding the order 2 automorphism $\psi$ switching both extremal disc centers. As there are order 6 automorphisms in this group and at least two order $2$ automorphisms, it must be isomorphic to the dihedral group $D_6$.
 
 For the rest of the cases in \autoref{th:auts}, $\mathcal{C}_2$ stands for a cyclic group generated by  the order two element swapping both extremal disc centers, and the factors $C_2$ and $C_{\overline{2}}$ in the table denote cyclic groups of order two generated, respectively, by the rotation fixing the origin and by an obvious mirror symmetry fixing a line through the origin.

\

 {\bf Acknowledgements}.-Work partially supported by grant CEX2023-001347-S, funded by MCIN/AEI/10.13039/501100011033, Spain, and by the FPU Graduate Research Grant FPU24/01163.

\bibliographystyle{alpha}
\bibliography{references}  

\end{document}